\documentclass[11pt,a4paper]{article}

\usepackage[margin=1in]{geometry}
\usepackage[T1]{fontenc}
\usepackage{lmodern}
\usepackage[authoryear,round]{natbib}

\usepackage{amsmath,amsfonts,bm}

\def\eqref#1{(\ref{#1})}

\def\1{\bm{1}}

\def\eps{{\epsilon}}

\DeclareMathAlphabet{\mathsfit}{\encodingdefault}{\sfdefault}{m}{sl}
\SetMathAlphabet{\mathsfit}{bold}{\encodingdefault}{\sfdefault}{bx}{n}

\newcommand{\E}{\mathbb{E}}

\newcommand{\R}{\mathbb{R}}

\usepackage{amsmath,amssymb,amsthm,mathtools}
\usepackage{algorithm}
\usepackage{algpseudocode}
\usepackage{booktabs}
\usepackage{array}
\usepackage{enumitem}
\usepackage{microtype}
\usepackage{xcolor}
\usepackage{hyperref}
\usepackage{url}
\hypersetup{hidelinks}

\theoremstyle{plain}
\newtheorem{theorem}{Theorem}[section]
\newtheorem{proposition}[theorem]{Proposition}
\newtheorem{lemma}[theorem]{Lemma}
\newtheorem{corollary}[theorem]{Corollary}
\theoremstyle{definition}
\newtheorem{assumption}[theorem]{Assumption}
\newtheorem{definition}[theorem]{Definition}
\theoremstyle{remark}

\newcommand{\norm}[1]{\left\lVert #1\right\rVert}
\newcommand{\ip}[2]{\left\langle #1,#2\right\rangle}
\newcommand{\argminop}{\operatorname*{arg\,min}}
\newcommand{\Proj}{\operatorname{Proj}}
\newcommand{\op}{\mathrm{op}}
\newcommand{\cO}{\mathcal{O}}

\newcommand{\FD}{\mathrm{FD}}
\newcommand{\PP}{\mathbb P}
\newcommand{\ind}{\mathbf 1}
\newcommand{\prog}{\operatorname{prog}}
\newcommand{\Ortho}{\operatorname{Ortho}}

\title{Optimal Stochastic Bilevel Optimization with First-Order Oracles}

\author{Linxuan Pan \qquad Junchi Yang\\
{\normalsize The Chinese University of Hong Kong, Shenzhen}\\
{\small\texttt{linxuanpan@link.cuhk.edu.cn}\quad
\texttt{yangjunchi@cuhk.edu.cn}}}
\date{}
\hypersetup{pdfauthor={Pan Linxuan and Junchi YANG}}

\begin{document}
\maketitle

\begin{abstract}

We study nonconvex--strongly-convex bilevel optimization under a stochastic
first-order oracle. We introduce MRT-FD, a single-loop first-order method
that simultaneously tracks the upper-level variable, the lower-level solution,
and the auxiliary response arising from implicit differentiation of the
hyperobjective. MRT-FD performs one update of each variable per iteration
and approximates the second-order derivative actions using
order-$p$ finite differences. For any fixed finite smoothness order
$p\ge1$ in the lower-level variable, MRT-FD finds an $\varepsilon$-stationary
point using $\mathcal{O}(\varepsilon^{-4-2/p})$ stochastic gradient
queries. We also prove a matching
$\Omega(\varepsilon^{-4-2/p})$ oracle lower bound. The lower-bound
construction starts from a hard nonconvex minimization chain with a stronger stochastic oracle, and lifts it to a bilevel problem through a
sinusoidal coupling with a scalar lower-level variable. Consequently, the
dependence on $\varepsilon$ is optimal for every fixed finite $p$, closing
the upper--lower complexity gap in this stochastic first-order oracle
setting.
\end{abstract}

\section{Introduction}
\label{sec:introduction}

Bilevel optimization models problems in which the performance of one
decision depends on the solution of another optimization problem. We study
the stochastic nonconvex--strongly-convex (NC--SC) setting
\begin{equation}
\label{eq:intro-problem}
\min_{x\in\R^{d_x}}\ \Phi(x):=f(x,y^\star(x)),
\qquad
y^\star(x):=\argminop_{y\in\R^{d_y}}g(x,y), 
\end{equation}
where $g(x,\cdot)$ is strongly convex and the hyperobjective $\Phi$ may be
nonconvex. This formulation captures applications including hyperparameter
optimization~\citep{franceschi2018bilevel}, data
reweighting~\citep{pan2025scalebio}, and
meta-learning~\citep{finn2017model}. 

Under standard smoothness conditions, $\Phi$ is a smooth nonconvex
function. Writing $H(x,y):=\nabla^2_{yy}g(x,y)$ and
$J(x,y):=\nabla^2_{xy}g(x,y)$, implicit differentiation gives
\begin{equation}
\label{eq:intro-hypergradient}
\nabla\Phi(x)=\nabla_x f(x,y^\star(x))
-J(x,y^\star(x))H(x,y^\star(x))^{-1}\nabla_y f(x,y^\star(x)).
\end{equation}
The hypergradient therefore involves second-order derivatives of $g$,
motivating methods based on Hessian- and Jacobian-vector
products~\citep{ghadimi2018approximation,ji2021bilevel}. When such
second-order stochastic derivative information is available, an
$\cO(\eps^{-4})$ oracle complexity is achievable for finding an
$\eps$-stationary point, matching the $\eps$ dependence inherited from
the lower bound for stochastic nonconvex optimization
\citep{arjevani2023lowerbounds}.

For large-scale problems, however, even Hessian-vector products may be
expensive or unavailable, motivating methods that use only first-order
queries. This restriction substantially changes the known complexity.
Under standard smoothness assumptions, the best first-order methods
achieve an $\cO(\eps^{-6})$ stochastic oracle complexity
\citep{chen2025nearoptimal, gu2026sgha}. Recently, \citet{chen2026faster} show
that additional $p$th-order smoothness with respect to the lower variable
$y$ can improve the upper bound to
$\widetilde{\cO}(\eps^{-4-2/p})$.
As $p$ increases, this rate interpolates between the
$\eps^{-6}$ complexity under base smoothness and the
$\eps^{-4}$ benchmark available with second-order information.

This improvement raises a fundamental question. Existing lower bounds do
not characterize how the complexity of a first-order stochastic
bilevel oracle depends on the available smoothness order $p$. It remains unclear whether the exponent
$4+2/p$ reflects an intrinsic information-theoretic barrier. %
\begin{quote}
\textit{What is the optimal complexity of stochastic NC--SC bilevel
optimization under a first-order oracle, and how does higher-order
smoothness in the lower variable change this complexity?}
\end{quote}

We answer the question above by sharpening both the upper and lower
complexity bounds.

\paragraph{A single-loop first-order algorithm.}
We develop Moving-Target Tracking with Finite Differences (MRT-FD), a
single-loop method that requires  order-$p$ smoothness only for
$g$ with respect to the lower variable $y$, for any $p\ge1$. We introduce
the auxiliary vector $z^\star(x)$ as the unique solution of
\[
H(x,y^\star(x))z^\star(x)=\nabla_y f(x,y^\star(x)).
\]
MRT-FD maintains $y_k$ and $z_k$ to track the moving targets
$y^\star(x_k)$ and $z^\star(x_k)$ while simultaneously updating the outer
variable $x_k$. It uses order-$p$ finite differences to reconstruct the
required second-order derivative actions from first-order queries, while
projection keeps $z_k$ bounded and controls the resulting finite-difference
errors. Our analysis couples objective descent with the tracking errors
$\|y_k-y^\star(x_k)\|^2$ and $\|z_k-z^\star(x_k)\|^2$ through a
moving-target Lyapunov function that jointly controls target motion,
finite-difference bias, and stochastic variance. For fixed
$p$, it achieves
$\cO(\eps^{-4-2/p})$ complexity, matching the $\eps$ dependence of
\citet{chen2026faster} while removing the logarithmic factor.

\paragraph{Lower bound for finite smoothness order $p$.}
We establish a matching lower bound under the same assumptions and oracle
model. The construction embeds a hard nonconvex
chain into a bilevel problem with a scalar lower-level variable through a
sinusoidal coupling. Setting a coupling scale to
$\delta=\eps^{1/p}$ preserves the required order-$p$ smoothness while
keeping the resulting hyperobjective close to the hard chain. We then
construct a stochastic oracle in which
informative observations occur only through a rare Bernoulli event with
probability $\varpi=\Theta(\delta^2\eps^2)$. A random-rotation argument
limits progress along the hidden chain for arbitrary adaptive randomized
algorithms. Combining a
chain of length $T=\Theta(\eps^{-2})$ with the reveal probability gives
the lower bound of
$\Omega(T/\varpi)=\Omega(\eps^{-4-2/p})$. %

\subsection{Related Work}
\label{sec:related-work}

\paragraph{Bilevel optimization with second-order information.}
Classical bilevel methods compute the hypergradient using second-order
information. Iterative differentiation (ITD) differentiates through a
finite lower-level trajectory
\citep{domke2012generic,shaban2019truncated,bolte2021nonsmooth},
whereas approximate implicit differentiation (AID) approximates the
inverse-Hessian--gradient product through linear-system or Neumann-type
solvers
\citep{pedregosa2016hyperparameter,ghadimi2018approximation,
grazzi2020iteration,ji2021bilevel}.

In stochastic NC--SC bilevel optimization, methods such as
stocBiO~\citep{ji2021bilevel} and SUSTAIN~\citep{khanduri2021near}
construct hypergradient estimators using Hessian- and Jacobian-vector
products. SOBA/SABA~\citep{dagreou2022framework} and
MA-SOBA~\citep{chen2024optimal} instead track the associated linear-system
solution jointly with the lower and upper-level variables. These methods are
structurally related to MRT-FD through linear-system tracking, but require
second-order derivative information.

\paragraph{First-order bilevel optimization.}
Fully first-order methods often rely on value-function or penalty
reformulations, including BOME~\citep{liu2022bome}, penalty-based bilevel
gradient methods~\citep{shen2023penalty}, and
F$^2$SA~\citep{kwon2023fullyfirstorder}. Subsequent work improves the
first-order oracle complexity through refined penalty and two-timescale
schemes~\citep{kwon2024complexity,chen2025nearoptimal}. More recently,
SGHA~\citep{gu2026sgha} develops a single-loop method based on a
constrained reformulation of lower-level stationarity and finite-difference
approximations, achieving $\mathcal{O}(\eps^{-6})$ complexity, but requires additional assumptions in the boundedness.

FdeHBO~\citep{yang2023achieving} is the closest structural comparison to
MRT-FD: it tracks the auxiliary linear-system solution using projection
and finite-difference Hessian-vector approximations. Its guarantees,
however, rely on stronger stochastic assumptions, including stochastic
smoothness and unbiased stochastic Hessian estimates with bounded
variance. FdeHBO uses variance reduction and reuses the same sample
across multiple query points, whereas MRT-FD uses fresh independent
samples and order-$p$ finite differences. Their analyses track
different auxiliary errors: FdeHBO uses the current lower-level
linear-system solution, while we directly control
$\|z_k-z^\star(x_k)\|^2$.

\citet{chen2026faster} show that $p$th-order smoothness in the lower
variable can improve the complexity from
$\widetilde{\mathcal{O}}(\eps^{-6})$ to
$\widetilde{\mathcal{O}}(\eps^{-4-2/p})$. Their F$^2$SA-$p$ method uses
high-order finite differences within a penalty formulation over perturbed
lower-level problems, and assumes higher-order smoothness of both $f$ and
$g$ in $y$. In contrast, our additional order-$p$ smoothness condition
is imposed only on $g$.

\paragraph{Lower bounds.}
\citet{liang2023lower} established lower complexity bounds for
convex--strongly-convex and strongly-convex--strongly-convex bilevel
problems, while \citet{dagreou2024lower} derived matching lower bounds
for finite-sum bilevel empirical-risk minimization. Beyond the
strongly-convex lower-level setting, \citet{chen2024finding} showed
hardness results for finding stationary points of the hyperobjective
when the lower-level problem is only convex.
For stochastic NC--SC bilevel optimization, \citet{kwon2024complexity} establish lower bounds under a $y^\star$-aware oracle, where unbiased bounded-variance stochastic gradients are guaranteed only locally around $y^\star(x)$, making their setting not directly comparable to ours. More recently, \citet{chen2026condition} prove an $\Omega(\kappa^{4.5}\eps^{-4})$ lower bound for NC--SC bilevel optimization, where $\kappa$ denotes the condition number, while concurrent work by \citet{gu2026lower} establishes an $\Omega(\kappa^8\eps^{-6})$ lower bound. Our result complements these works by characterizing the effect of higher-order smoothness in the lower variable: for a fixed $p$, we prove a $\Omega(\eps^{-4-2/p})$ lower bound. %

\section{Problem Setting and Oracle Model}
\label{sec:problem}

We consider the bilevel optimization problem
\begin{equation}
\label{eq:problem}
\min_{x\in\R^{d_x}}\ \Phi(x):=f(x,y^\star(x)),
\qquad
y^\star(x):=\argminop_{y\in\R^{d_y}}g(x,y),
\end{equation}
where $f$ and $g$ are the upper- and lower-level objectives, respectively.
We use the notation
\[
  H(x,y):=\nabla^2_{yy}g(x,y),
  \qquad
  J(x,y):=\nabla^2_{xy}g(x,y),
  \qquad
  b(x,y):=\nabla_y f(x,y).
\]
Thus $H(x,y)\in\R^{d_y\times d_y}$ and
$J(x,y)\in\R^{d_x\times d_y}$, so both matrices act on vectors in
$\R^{d_y}$.
We define the auxiliary linear-system solutions at a generic point
$(x,y)$ and at the lower-level minimizer, respectively, as
\[
  z(x,y):=H(x,y)^{-1}b(x,y),
  \qquad
  z^\star(x):=z(x,y^\star(x)).
\]
Under Assumption~\ref{ass:base} below, $H(x,y)\succeq\mu I$, and
implicit differentiation yields the hypergradient
\begin{equation}
\label{eq:hypergradient}
\nabla\Phi(x)
=\nabla_x f(x,y^\star(x))-J(x,y^\star(x))z^\star(x).
\end{equation}
Our goal is to find an approximately stationary point of $\Phi$.
For a possibly random output $x_R$, we refer to
$\E\norm{\nabla\Phi(x_R)}^2\le\eps^2$
as an $\eps$-stationarity guarantee.

\subsection{Assumptions}
\label{subsec:regularity}

We study smooth nonconvex--strongly-convex (NC--SC) bilevel
optimization under the following regularity assumptions.

\begin{assumption}[Base NC--SC regularity]
\label{ass:base}
The following conditions hold for constants $\ell_f,\ell_g,\rho_1,\mu>0$:
\begin{enumerate}[leftmargin=1.5em,label=(\roman*)]
  \item $f$ is differentiable with a jointly $\ell_f$-Lipschitz gradient,
  and $g$ is twice differentiable with a jointly $\ell_g$-Lipschitz
  gradient.
  \item $g(x,\cdot)$ is $\mu$-strongly convex for every $x$.
  \item The Hessian block $\nabla^2_{yy}g$ and the mixed derivative
  $\nabla^2_{xy}g$ are each $\rho_1$-Lipschitz in the joint variable
  $(x,y)$ with respect to the operator norm.
  \item $\norm{\nabla_y f(x,y)}\le B_f$ for all $(x,y)$, and $\Phi$ is bounded below.
\end{enumerate}
\end{assumption}

These assumptions are standard in the
literature~\citep{kwon2024complexity, chen2025nearoptimal, chen2026condition}
and are weaker than those used in some prior
work~\citep{kwon2023fullyfirstorder}.
Assumption~\ref{ass:base} implies that, for every $(x,y)$,
\[
  \norm{z(x,y)}\le B_f/\mu,
  \qquad
  \norm{z^\star(x)}\le B_f/\mu.
\]
We define the closed Euclidean ball centered at the origin with radius
$R_z$ by
\[
  Q:=\{z\in\R^{d_y}:\norm z\le R_z\},
  \qquad
  R_z:=4B_f/\mu.
\]
The algorithm will project its auxiliary updates onto $Q$.
This choice of radius ensures that both the current-point linear-system
solution $z(x,y)$ and the target $z^\star(x)$ lie in $Q$.

\begin{assumption}[Lower-variable order-$p$ smoothness]
\label{ass:order-p}
Fix an integer $p\ge1$ and a constant $L_p>0$.
For each $\ell\in\{x,y\}$, the map $\nabla_\ell g(x,\cdot)$ is $p$ times
differentiable for every $x$ and satisfies
\begin{equation}
\label{eq:p-smooth}
\norm{\nabla_y^p\nabla_\ell g(x,y)-\nabla_y^p\nabla_\ell g(x,y')}_{\op}
\le L_p\norm{y-y'}, \text{ for all } x,y,y'.
\end{equation}
\end{assumption}

This assumption imposes order-$p$ smoothness on the gradients of the
lower-level objective with respect to the lower variable.
For $p=1$, it follows from Assumption~\ref{ass:base}(iii) with
$L_1=\rho_1$, recovering the standard setting.
Beyond the base regularity conditions, we impose an explicit Lipschitz
condition only at order $p$, without separately imposing such conditions
at each intermediate derivative order.
The additional higher-order condition concerns only $g$; in contrast,
prior work also imposes higher-order smoothness on $f$ with respect
to $y$~\citep{chen2026faster}.

\subsection{Stochastic gradient oracle}
\label{subsec:oracle}

We now specify the stochastic gradient oracle used for both the upper
and lower complexity bounds.
At a query point $(x,y)$, a single oracle call returns four gradient
estimates:
\[
  \mathcal O(x,y)=\bigl(\hat F_x(x,y),\hat F_y(x,y),
                       \hat G_x(x,y),\hat G_y(x,y)\bigr).
\]
For $\ell\in\{x,y\}$, $\hat F_\ell(x,y)$ estimates
$\nabla_\ell f(x,y)$, and $\hat G_\ell(x,y)$ estimates
$\nabla_\ell g(x,y)$.
These estimates are assumed to be unbiased and to have bounded variance,
as specified below.

\begin{assumption}[Unbiased oracle with bounded variance]
\label{ass:oracle}
The query rules and oracle responses are measurable.
Let $\mathcal H$ denote the history before an oracle call, and let
$(x,y)$ be an $\mathcal H$-measurable query point.
For each $\ell\in\{x,y\}$, the returned estimates satisfy
\begin{gather}
\E[\hat F_\ell(x,y)\mid\mathcal H]=\nabla_\ell f(x,y),\qquad
\E[\hat G_\ell(x,y)\mid\mathcal H]=\nabla_\ell g(x,y),
\label{eq:oracle-mean}\\
\sum_{\ell\in\{x,y\}}
\E\!\left[
\|\hat F_\ell(x,y)-\nabla_\ell f(x,y)\|^2
+\|\hat G_\ell(x,y)-\nabla_\ell g(x,y)\|^2
\,\middle|\,\mathcal H
\right]
\le \sigma^2,
\label{eq:oracle-variance}
\end{gather}
almost surely, for a constant $\sigma\ge0$.
Each call uses fresh randomness.
Calls at query points fixed by a common conditioning history are
conditionally independent given that history.
No independence is assumed among the four estimates
$\hat F_x,\hat F_y,\hat G_x,\hat G_y$ returned by a single call.
\end{assumption}

For the upper-bound analysis, we use the following shorthand for
minibatch estimates obtained by averaging $m$ fresh oracle calls at a
fixed query point $(x,y)$:
\[
  \bar F_\ell(x,y):=\frac1m\sum_{i=1}^m\hat F_{\ell,i}(x,y),
  \qquad
  \bar G_\ell(x,y):=\frac1m\sum_{i=1}^m\hat G_{\ell,i}(x,y),
  \qquad \ell\in\{x,y\}.
\]
\section{A Single-Loop First-Order Algorithm}
\label{sec:algorithm}

In this section, we introduce our single-loop first-order algorithm and
establish its convergence guarantee. We begin with a finite-difference
procedure for estimating the matrix--vector products appearing in
\eqref{eq:hypergradient}, together with the bias and variance of the finite-difference approximation. 

\subsection{Estimating matrix--vector products from first-order queries}
\label{subsec:stencil}

We approximate the indirect term in \eqref{eq:hypergradient} by tracking
$z^\star(x)$ and forming $J(x,y)z$. At an available pair $(x,y)$, the
tracking update for $z$ is driven by the linear-system residual
$H(x,y)z-b(x,y)$. Thus, both $H(x,y)z$ and $J(x,y)z$ must be estimated
using only first-order information.

For $z\neq0$, define $u:=z/\|z\|$, and set $u:=0$ when $z=0$. The two
matrix--vector products admit the directional-derivative representations
\begin{align}
H(x,y)z
 &=\|z\|\lim_{t\to0}
 \frac{\nabla_y g(x,y+tu)-\nabla_y g(x,y-tu)}{2t},
 \label{eq:exact-h-action}\\
J(x,y)z
 &=\|z\|\lim_{t\to0}
 \frac{\nabla_x g(x,y+tu)-\nabla_x g(x,y-tu)}{2t}.
 \label{eq:exact-j-action}
\end{align}
Evaluating these quotients at a finite radius $\tau>0$ yields a
first-order finite-difference approximation. For example,
\(H(x,y)z\ \approx\
\frac{\|z\|}{2\tau}
\bigl[
\nabla_y g(x,y+\tau u)-\nabla_y g(x,y-\tau u)
\bigr],
\)
and replacing $\nabla_y g$ by $\nabla_x g$ gives the corresponding
approximation of $J(x,y)z$.

To exploit the higher-order regularity in
Assumption~\ref{ass:order-p}, we use a higher-order finite-difference
stencil that combines gradients evaluated at several points
$y+j\tau u$ along the same line. Such interpolation-based
finite-difference formulas are classical; see, e.g.,
\citep{atkinson2001finite, khan1999closed, fornberg1988arbitrarily}.
Let
$
\mathcal I_1=\{-1,1\},
$
and, for $p\ge2$, let
$
\mathcal I_p
=
\{-\lfloor p/2\rfloor,\ldots,\lceil p/2\rceil\}.
$
Choose the unique interpolation weights
$\{a_j\}_{j\in\mathcal I_p}$ satisfying
\begin{equation}
\label{eq:stencil-moments}
\sum_{j\in\mathcal I_p}a_j=0,\qquad
\sum_{j\in\mathcal I_p}j a_j=1,\qquad
\sum_{j\in\mathcal I_p}j^r a_j=0
\quad (2\le r\le p).
\end{equation}
These moment conditions preserve the first-order term in the Taylor
expansion while canceling all polynomial terms of orders
$0,2,\ldots,p$.

For $\ell\in\{x,y\}$, define
\begin{equation}
\label{eq:stencil}
\FD_\ell(x,y;z)
:=
\frac{\|z\|}{\tau}
\sum_{j\in\mathcal I_p}
a_j\nabla_\ell g(x,y+j\tau u).
\end{equation}
We suppress the dependence on $p$ and $\tau$ in the notation.
For $p=1$ and $p=2$, the zero coefficients can be omitted, and
\eqref{eq:stencil} reduces to the same two-point central difference.
For every $p$, the stencil contains at most $p+1$ query locations.
A similar high-order finite-difference construction is used by
\citet{chen2026faster}, but for a different purpose. Their differences
are applied to quantities arising from perturbed lower-level problems
within a penalty-based construction, whereas our differences act
directly on the original lower-level gradients along the direction $z$
to estimate the actions $Hz$ and $Jz$.

\paragraph{Stochastic estimates.}
At each node $j\in\mathcal I_p$, we average $m$ fresh observations of
$\hat G_\ell$ into $\bar G_{\ell,j}$, using separate batches across
nodes. We then define the stochastic finite-difference estimator
\begin{equation}
\label{eq:batch-stencil}
\widehat{\FD}_\ell(x,y;z)
:=
\frac{\|z\|}{\tau}
\sum_{j\in\mathcal I_p}
a_j
\bar G_{\ell,j}(x,y+j\tau u).
\end{equation}
Let $\mathcal H$ be a history that fixes $(x,y,z)$ before these batches
are drawn. Then
\[
\E[\widehat{\FD}_\ell(x,y;z)\mid\mathcal H]
=
\FD_\ell(x,y;z).
\]
Define
$
A_p:=\sum_{j\in\mathcal I_p}a_j^2$. 
Under Assumptions~\ref{ass:order-p} and~\ref{ass:oracle}, for each
$\ell\in\{x,y\}$,
\begin{align}
&\|\FD_\ell(x,y;z)-\nabla_{\ell y}^2g(x,y)z\|
 \le C_p L_p\tau^p\|z\|,
 \label{eq:fd-bias}\\
&\E\!\left[
\|\widehat{\FD}_\ell(x,y;z)-\FD_\ell(x,y;z)\|^2
\,\middle|\,\mathcal H
\right]
 \le
 \frac{A_p\sigma^2\|z\|^2}{m\tau^2},
 \label{eq:fd-variance}
\end{align}
where $C_p$ depends only on the stencil nodes and weights.
Explicit expressions for $C_p$ and $A_p$ are given in
Lemma~\ref{lem:app-fd}. Related expressions follow from classical
finite-difference formulas; see, e.g.,
\citet{li2005general, bai2009representation}. For completeness, we
provide a self-contained proof in the appendix.

Equations~\eqref{eq:fd-bias} and~\eqref{eq:fd-variance} reveal the
bias--variance tradeoff induced by finite differencing. A smaller
$\tau$ decreases the deterministic bias at rate $\tau^p$, but amplifies
the stochastic variance at rate $\tau^{-2}$. Increasing the batch size
$m$ controls this variance amplification. The dependence of $C_p$ and
$A_p$ on the stencil therefore enters the constants in the resulting
complexity bounds.

\begin{samepage}

\subsection{Algorithm}
\label{subsec:algorithm}

We present our algorithm, \emph{Moving-Target Tracking with Finite
Differences} (MRT-FD), in Algorithm~\ref{alg:mrt-fd}. The intuition
behind the single-loop design is similar to that of
SOBA~\citep{dagreou2022framework}: rather than solving the lower-level
problem and the auxiliary linear system to high accuracy at every outer
iteration, the algorithm maintains and updates the variables $x$, $y$,
and $z$ simultaneously.
Compared with SOBA, MRT-FD requires only an unbiased, bounded-variance first-order oracle. Finite differences replace Hessian- and Jacobian-vector products without requiring samplewise smoothness or sample reuse across query points. Projection controls the resulting estimation errors.

\begin{algorithm}[H]
\caption{MRT-FD: projected single-loop tracking with order-$p$ differences}
\label{alg:mrt-fd}
\begin{algorithmic}[1]
\Require Fixed $x_0,y_0$, initialize $z_0=0$. Stepsizes
$\eta_x,\eta_y,\eta_z$. Difference radius $\tau$, batch size $m$,
weights $(\mathcal I_p,a)$. Projection radius $R_z$;
$Q=\{z\in\R^{d_y}:\|z\|\le R_z\}$.
\For{$k=0,\ldots,K-1$}
  \State $z_{k+1}\gets\Proj_Q\!\left[z_k-\eta_z
    \bigl(\widehat{\FD}_y(x_k,y_k;z_k)-\bar F_y(x_k,y_k)\bigr)\right]$.
    \label{line:z-update}
  \State $x_{k+1}\gets x_k-\eta_x\bigl[\bar F_x(x_k,y_k)
  - \widehat{\FD}_x(x_k,y_k;z_{k+1})\bigr]$.
  \State $y_{k+1}\gets y_k-\eta_y \bar G_y(x_k,y_k)$.
    \label{line:y-update}
\EndFor
\State Draw $R$ uniformly from $\{0,\ldots,K-1\}$ and return $x_R$.
\end{algorithmic}
\end{algorithm}
\end{samepage}

\paragraph{The $x$ update: estimate the outer gradient.}
The outer update uses a stochastic approximation of
$\nabla\Phi(x_k)$. In \eqref{eq:hypergradient}, replacing the unavailable
$y^\star(x_k)$ and $z^\star(x_k)$ by the current lower iterate $y_k$ and
the newly updated auxiliary iterate $z_{k+1}$ gives
\begin{equation}
\label{eq:outer-gradient-surrogate}
\nabla_x f(x_k,y_k)-J(x_k,y_k)z_{k+1}.
\end{equation}
The two terms in \eqref{eq:outer-gradient-surrogate} are estimated by
$\bar F_{x,k}:=\bar F_x(x_k,y_k)$ and
$\widehat{\FD}_x(x_k,y_k;z_{k+1})$, respectively. Accordingly, define
$\hat h_k:=\bar F_{x,k}-\widehat{\FD}_x(x_k,y_k;z_{k+1})$ to approximate hypergradient, and then the
outer update can be written as $x_{k+1}=x_k-\eta_x\hat h_k$.
The discrepancy between $\hat h_k$ and $\nabla\Phi(x_k)$ comes from four sources: the tracking errors of $y_k$ and $z_{k+1}$, the finite-difference bias, and the stochastic sampling noise. The $y$- and $z$-updates are designed to control the two tracking errors. Since the outer iterate $x_k$ changes over time, the corresponding targets $y^\star(x_k)$ and $z^\star(x_k)$ also evolve; MRT-FD therefore treats them as moving targets.

\paragraph{The $z$ update: track the inverse-Hessian--gradient product.}
At the available pair $(x_k,y_k)$, the vector $z(x_k,y_k)$ minimizes the
strongly convex quadratic
\begin{equation}
\label{eq:auxiliary-quadratic}
z\ \longmapsto\
\tfrac12 z^\top H(x_k,y_k)z-b(x_k,y_k)^\top z.
\end{equation}
The gradient of this quadratic is the linear-system residual
$H(x_k,y_k)z-b(x_k,y_k)$, whose unique zero is $z(x_k,y_k)$.
Moreover, $z(x_k,y_k)$ approaches the target $z^\star(x_k)$ as $y_k$
approaches $y^\star(x_k)$.
With $\bar F_{y,k}:=\bar F_y(x_k,y_k)$,
Line~\ref{line:z-update} estimates the residual at $z_k$ using
$\widehat{\FD}_y(x_k,y_k;z_k)-\bar F_{y,k}$ and performs one projected
stochastic gradient step. Since $\|z(x_k,y_k)\|\le B_f/\mu$, choosing
$R_z\ge B_f/\mu$ ensures that the current linear-system solution lies in
$Q$. The projection also guarantees $\|z_k\|\le R_z$ for every $k$,
which is crucial for uniformly controlling the finite-difference bias
and variance in \eqref{eq:fd-bias}--\eqref{eq:fd-variance}.

\paragraph{The $y$ update: track the lower solution.}
With $\bar G_{y,k}:=\bar G_y(x_k,y_k)$,
Line~\ref{line:y-update} performs one stochastic gradient step on
$g(x_k,\cdot)$ at $y_k$, namely
$y_{k+1}=y_k-\eta_y\bar G_{y,k}$, to track $y^\star(x_k)$.
This tracking controls the evaluation-point error in both the outer
direction and the coefficients $H(x_k,y_k)$ and $b(x_k,y_k)$ of the
auxiliary linear system.

\subsection{Convergence guarantee}
\label{subsec:complexity}

Let $\Phi_{\inf}:=\inf_x\Phi(x)$, and let $\Delta_0$ denote an upper
bound on the initial objective gap and tracking errors:
\[
\Delta_0\ge\Phi(x_0)-\Phi_{\inf}
  +\|y_0-y^\star(x_0)\|^2+(B_f/\mu)^2,
\]
where the last term bounds the squared auxiliary tracking error at
$z_0=0$. Let $\alpha>0$ and $C_\star\ge1$ be the constants specified in
Theorem~\ref{thm:main}. We set the projection radius to
$R_z=4B_f/\mu$, choose the tracking stepsizes
$\eta_y=\eta_z=1/(2\ell_g)$, and set the outer stepsize
$\eta_x=\alpha$.
For a target accuracy $0<\eps\le1$, choose the finite-difference radius,
minibatch size, and number of iterations as
\begin{equation}
\label{eq:main-parameters}
\tau=\left(\frac{\eps^2}{3C_\star}\right)^{1/(2p)},\qquad
m=\left\lceil\frac{3C_\star}{\eps^2\tau^2}\right\rceil,\qquad
K=\left\lceil\frac{3C_\star(1+\Delta_0)}{\eps^2}\right\rceil.
\end{equation}
The role of the smoothness order $p$ is apparent from these choices.
Since the finite-difference bias scales as $\tau^{2p}$, a larger $p$
allows a larger difference radius $\tau$ while maintaining the same
target bias. This reduces the variance amplification $1/\tau^2$ and,
consequently, the required minibatch size $m$. The resulting convergence
and oracle complexity are summarized below.

\begin{theorem}
\label{thm:main}
Suppose Assumptions~\ref{ass:base}--\ref{ass:oracle} hold.
There exist constants $\alpha>0$ and $C_\star\ge1$, depending only on
$p$ and $\ell_f,\ell_g,\rho_1,\mu,B_f,L_p,\sigma$, such that the
following holds.
Run Algorithm~\ref{alg:mrt-fd} from fixed $x_0,y_0$ and $z_0=0$, with
$R_z=4B_f/\mu$, $\eta_y=\eta_z=1/(2\ell_g)$, and $\eta_x=\alpha$.
For any $0<\tau\le1$ and positive integers $m,K$, the output satisfies
\begin{equation}
\label{eq:main-finite-time}
\E\|\nabla\Phi(x_R)\|^2
\le C_\star\left(\frac{\Delta_0}{K}
   +\tau^{2p}+\frac{1}{m\tau^2}\right).
\end{equation}
With the parameter choices in \eqref{eq:main-parameters}, we obtain
$\E\|\nabla\Phi(x_R)\|^2\le\eps^2$ using at most
\[
(2p+5)mK=\cO\!\left(\eps^{-4-2/p}\right)
\]
oracle calls, for any fixed finite $p$.
\end{theorem}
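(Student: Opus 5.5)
The plan is a standard moving-target Lyapunov analysis in the style of SOBA, with a potential
\[
\mathcal L_k:=\Phi(x_k)+\lambda_y\|y_k-y^\star(x_k)\|^2+\lambda_z\|z_k-z^\star(x_k)\|^2,
\]
where the weights $\lambda_y,\lambda_z$ and the outer stepsize $\alpha$ will be fixed at the end. We use the standard smoothness facts implied by Assumption~\ref{ass:base}: $y^\star$ is $L_y:=\ell_g/\mu$-Lipschitz, $z^\star$ is $L_z$-Lipschitz, $z(x,\cdot)$ is Lipschitz in $y$, and $\nabla\Phi$ is $L_\Phi$-Lipschitz. All of these constants depend only on $\ell_f,\ell_g,\rho_1,\mu,B_f$.

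\textbf{Step 1 (outer descent).} Write $\hat h_k=\nabla\Phi(x_k)+e_k+\xi_k$, where $\xi_k$ is the conditionally mean-zero sampling noise and $e_k$ collects the deterministic errors. Since $\|J\|_{\op}\le\ell_g$, the difference between \eqref{eq:hypergradient} and \eqref{eq:outer-gradient-surrogate} is bounded by $C(\|y_k-y^\star_k\|+\|z_{k+1}-z^\star_k\|)$, using $\|z_{k+1}\|\le R_z$ and the Lipschitz constant $\rho_1$. The finite-difference bias \eqref{eq:fd-bias} is at most $C_pL_p\tau^pR_z$. The variance satisfies $\E\|\xi_k\|^2\lesssim \sigma^2/m+A_p\sigma^2R_z^2/(m\tau^2)\lesssim 1/(m\tau^2)$, using $\tau\le1$. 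The $L_\Phi$-descent lemma with $\eta_x=\alpha$ small then gives
\[
\E\Phi(x_{k+1})\le\E\Phi(x_k)-\tfrac\alpha2\E\|\nabla\Phi(x_k)\|^2-\tfrac\alpha4\E\|\hat h_k\|^2+C\alpha\,\E\bigl[\|y_k-y^\star_k\|^2+\|z_{k+1}-z^\star_k\|^2\bigr]+C\alpha\bigl(\tau^{2p}+\tfrac1{m\tau^2}\bigr).
\]
We keep the $-\frac\alpha4\E\|\hat h_k\|^2$ term deliberately, because it will absorb target motion in Step~3.

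\textbf{Step 2 (tracking contractions at a fixed $x_k$).} For $y$, SGD with step $1/(2\ell_g)$ on the $\mu$-strongly convex, $\ell_g$-smooth function $g(x_k,\cdot)$ gives
\[
\E\|y_{k+1}-y^\star_k\|^2\le(1-\tfrac{\mu}{2\ell_g})\|y_k-y^\star_k\|^2+\sigma^2/(4\ell_g^2m).
\]
For $z$, we compare with the target $z(x_k,y_k)\in Q$ and use nonexpansiveness of $\Proj_Q$. The step $z\mapsto z-\eta_z(Hz-b)$ is a $(1-\mu/(2\ell_g))$-contraction toward $z(x_k,y_k)$. The finite-difference bias adds $O(\tau^p)$, handled by Young's inequality, and the noise adds $O(1/(m\tau^2))$. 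We then move from $z(x_k,y_k)$ to $z^\star_k$ at the cost of $C\|y_k-y^\star_k\|^2$, again by Young's inequality. The resulting recursion is
\[
\E\|z_{k+1}-z^\star_k\|^2\le(1-c)\|z_k-z^\star_k\|^2+C\|y_k-y^\star_k\|^2+C(\tau^{2p}+1/(m\tau^2)),
\]
with $c=\mu/(4\ell_g)$.

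\textbf{Step 3 (moving the target).} Using $\|a+b\|^2\le(1+c/2)\|a\|^2+(1+2/c)\|b\|^2$ together with $\|y^\star_{k+1}-y^\star_k\|\le L_y\alpha\|\hat h_k\|$ (and the analogue for $z^\star$), the switch to the new targets costs $O(\alpha^2\|\hat h_k\|^2)$. For small $\alpha$ this is dominated by the $-\frac\alpha4\E\|\hat h_k\|^2$ term from Step~1. The delicate point is weighting: the $y$-error feeds into the $z$-recursion, and both feed into the descent inequality. The main obstacle is choosing $\lambda_z=\Theta(\alpha/c)$ and $\lambda_y\gg\lambda_z$ so that every cross term, including the $C\alpha\|z_{k+1}-z^\star_k\|^2$ term from Step~1 (which is bounded via Step~2), is absorbed by the contraction margins. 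Once that is done, $\alpha$ is chosen small enough that the $\alpha^2\|\hat h_k\|^2$ terms cancel. The result is
\[
\E\mathcal L_{k+1}\le\E\mathcal L_k-\tfrac\alpha4\E\|\nabla\Phi(x_k)\|^2+C\bigl(\tau^{2p}+\tfrac1{m\tau^2}\bigr).
\]

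\textbf{Step 4 (telescoping and complexity).} Summing the last inequality over $k=0,\dots,K-1$ and using $\mathcal L_0-\Phi_{\inf}\lesssim\Delta_0$ (recall $z_0=0$ and $\|z^\star\|\le B_f/\mu$) gives \eqref{eq:main-finite-time}, with $C_\star$ absorbing $4/\alpha$ and all remaining constants. With the choices in \eqref{eq:main-parameters}, each of the three terms in \eqref{eq:main-finite-time} is at most $\eps^2/3$. Each iteration issues batches of size $m$ at no more than $2(p+1)$ stencil nodes plus a few additional points, for a total of at most $2p+5$ batches, so the oracle count is at most $(2p+5)mK$. Finally, $m=O(\eps^{-2}\tau^{-2})=O(\eps^{-2-2/p})$ and $K=O(\eps^{-2})$, so the total is $O(\eps^{-4-2/p})$.
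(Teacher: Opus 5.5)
Your proposal is correct and follows essentially the same moving-target Lyapunov argument as the paper: contract the $y$- and $z$-tracking errors at the fixed point $x_k$, pay for target motion via Young's inequality and the Lipschitz continuity of $y^\star,z^\star$, absorb it by taking $\alpha$ small with tracking weights of order $\alpha$, and telescope. The differences are cosmetic. You keep $-\tfrac\alpha4\E\|\hat h_k\|^2$ in the descent step and contract $z$ toward $z(x_k,y_k)$ before switching to $z^\star(x_k)$. The paper instead bounds $\E_k\|\hat h_k\|^2\le2\|\nabla\Phi(x_k)\|^2+2CT_k+\cdots$ and contracts directly toward $z^\star(x_k)$ using the residual bound $\|H(x,y)z^\star(x)-b(x,y)\|\le B\|y-y^\star(x)\|$, with a single weight $\lambda$ on $T_k=\|e_k\|^2+\omega\|\delta_k\|^2$.
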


Appendix~\ref{app:upper-proof} provides the proof and explicit admissible
choices of $\alpha$ and $C_\star$. For $p=1$,
Theorem~\ref{thm:main} gives an $\cO(\eps^{-6})$ sample complexity,
matching existing upper bounds~\citep{chen2025nearoptimal, gu2026lower}.
The single-loop method of \citet{gu2026sgha} attains the same dependence
under additional boundedness assumptions.
For $p\ge2$, our rate matches the $\eps^{-4-2/p}$ dependence of
\citet{chen2026faster} while removing the logarithmic factor. Moreover, \citet{chen2026faster} assumes higher-order smoothness of both $f$ and $g$ in $y$, as their finite-difference estimator is applied to the gradient of a penalty function that combines the upper- and lower-level objectives.
\section{Matching Lower Bound under First-Order Oracles}
\label{sec:lower-bound}

We first recall a hard instance construction for stochastic nonconvex minimization and establish its
lower bound under a stronger oracle notion. We then lift this construction
to a bilevel instance and show that the bilevel oracle call can be
simulated by the oracle in the minimization hard instance. %

\subsection{A stochastic nonconvex minimization hard instance}
\label{subsec:lb-first-order}

Consider a smooth objective $P:\R^d\to\R$ whose gradient Lipschitz
constant is bounded by a numerical constant and $P(0)-\inf P\le1$.  An ordinary stochastic-gradient
oracle returns $\widehat{\nabla P}(x)$ such that, for each query $x$
selected from the history $\mathcal H$,
\[
  \E[\widehat{\nabla P}(x)\mid\mathcal H]=\nabla P(x),\qquad
  \E[\|\widehat{\nabla P}(x)-\nabla P(x)\|^2\mid\mathcal H]\le1.
\]
The construction below recovers the standard minimization lower bound under a strengthened oracle and provides the functions and mechanism used later in our bilevel hard instance.

\paragraph{The objective and its decomposition.}
The standard smooth chain
$C_T(v)=\sum_{i=1}^T\psi_i(v)$, $v\in\R^T$, couples neighboring
coordinates: each $\psi_i$ depends only on $(v_{i-1},v_i)$, with $v_0=1$
\citep{carmon2020stationary}.
At a point supported on the first $k$ coordinates, its gradient is
supported on at most the first $k+1$.  Moreover, $|v_T|\le1$ implies
$\|\nabla C_T(v)\|\ge1$.  Thus a small gradient requires reaching the
last coordinate, while gradients expose at most one new coordinate
at a time along such queries.

Following~\citep{arjevani2023lowerbounds}, we introduce smooth weights
$\Theta_i(v)\in[0,1]$ to isolate the component that can reveal the next
coordinate:
\begin{equation}
\label{eq:lb-chain-split}
  W_T(v):=-\sum_{i=1}^T\Theta_i(v)\psi_i(v),\qquad
  B_T(v):=C_T(v)+W_T(v).
\end{equation}
The weight $\Theta_i$ equals one if $|v_j|\le1/4$ for every $j\ge i$,
and zero if some $|v_j|\ge1/2$ with $j\ge i$.
For queries supported on the first $k$ coordinates, $\nabla B_T$ stays
within those coordinates, whereas $\nabla W_T$ can reach coordinate
$k+1$.  The specific weights also keep $W_T$, its gradient, and its
Hessian bounded independently of $T$.
Appendix~\ref{lb:sec:frontier} gives the formulas for $\psi_i,\Theta_i$
and proves these properties.

Choose $T=\Theta(\eps^{-2})$ and numerical scalings
$a=\Theta(\eps^2)$ and $\lambda_{\rm lb}=\Theta(\eps)$.
Let $U\in\R^{d\times T}$ satisfy $U^\top U=I_T$; in the lower-bound
argument, $U$ is drawn uniformly at random to hide the chain directions
from an adaptive algorithm~\citep{arjevani2023lowerbounds, woodworth2017lower}. Define the soft compression map
\[
  \rho_R(w):=\frac{w}{\sqrt{1+\|w\|^2/R^2}},
  \qquad R=230\sqrt T,
\]
and the corresponding latent coordinate
$v(x):=U^\top\rho_R(x/\lambda_{\rm lb})$.
The hard objective and the pair $(A,u)$ are
\begin{equation}
\label{eq:lb-pair-construction}
\begin{aligned}
  P(x)&:=aC_T(v(x))+\frac{a}{10\lambda_{\rm lb}^2}\|x\|^2,\\
  u(x)&:=aW_T(v(x)),\qquad A(x):=P(x)+u(x).
\end{aligned}
\end{equation}
Here $u:\R^d\to\R$ carries the next-coordinate information and
$P=A-u$. The soft compression bounds the points seen by the chain,
while the quadratic term prevents large-norm queries from having a small
gradient. Although $A,u,P$ depend on the chosen $U$ through $v(x)$, we
omit this dependence from the notation. Appendix~\ref{lb:sec:scaling}
verifies the scaling constants.

\paragraph{The gradient oracle and a comparison oracle.}
At each call, draw a fresh $\zeta\sim\mathrm{Bernoulli}(\varpi)$ and define the stochastic gradient estimator
\begin{equation}
\label{eq:lb-first-order-oracle}
  \widehat{\nabla P}(x)
  :=\nabla A(x)-\frac{\zeta}{\varpi}\nabla u(x).
\end{equation}
Since $\E[\zeta/\varpi]=1$, this estimator is unbiased, while the informative correction $\nabla u(x)$ is suppressed with probability $1-\varpi$.
For the lower-bound argument, we introduce a richer oracle response,
\begin{equation}
\label{eq:lb-comparison-oracle}
  \mathcal R_\varpi(x)
  :=\left(\nabla A(x),\frac{\zeta}{\varpi}u(x),
          \frac{\zeta}{\varpi}\nabla u(x)\right),
\end{equation}
which we call the \emph{comparison oracle}. It is stronger than
\eqref{eq:lb-first-order-oracle}, since subtracting its third component
from its first recovers the stochastic gradient estimate in one call.
The additional scalar component $\frac{\zeta}{\varpi}u(x)$ is included
because it will be used to reproduce the lower-gradient oracle
$\hat G_y$ in the bilevel construction. The next proposition establishes
a lower bound directly for this stronger oracle.

\begin{proposition}[Hard objective and comparison oracle]
\label{prop:lb-comparison}
There are numerical constants $c,C,\eps_0>0$ such that, for every
$0<\eps<\eps_0$ and $0<\varpi\le1/2$, the construction
\eqref{eq:lb-pair-construction} admits $T=\Theta(\eps^{-2})$, $d\ge T$,
and a distribution of $U$ with the following properties.
Both $\nabla A$ and $\nabla u$ are $C$-Lipschitz,
$P(0)-\inf P\le1/4$, and
\begin{equation}
\label{eq:lb-frontier-size}
  \sup_x|u(x)|\le C\eps^2,\qquad
  \sup_x\|\nabla u(x)\|\le C\eps.
\end{equation}
Every adaptive randomized algorithm using at most $cT/\varpi$ calls to
$\mathcal R_\varpi$, with arbitrary queries and final output, satisfies
\begin{equation}
\label{eq:lb-common-progress}
  \PP\bigl(\|\nabla P(x_{\rm out})\|\ge2\eps\bigr)\ge\frac34.
\end{equation}
The rotation $U$ is independent of the algorithm seed and oracle draws;
probability includes all three sources of randomness.
\end{proposition}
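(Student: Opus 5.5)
The proof will follow the Arjevani et al.\ (2023) template for probabilistic zero-chains, adapted to the comparison oracle $\mathcal R_\varpi$. It has three parts: deterministic properties of the construction, a progress bound for the hidden chain, and a conversion from limited progress to a large gradient.

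\textbf{Deterministic properties.} We take the standard facts about $\psi_i,\Theta_i$ from Appendix~\ref{lb:sec:frontier}:
\begin{itemize}[leftmargin=1.5em]
\item $C_T(0)-\inf C_T\le 12T$ and $\nabla C_T$ is $O(1)$-Lipschitz;
\item $W_T$, $\nabla W_T$ and $\nabla^2 W_T$ are bounded by numerical constants independent of $T$;
\item $\|\nabla C_T(v)\|>1$ whenever $|v_T|<1$.
\end{itemize}
The map $x\mapsto v(x)=U^\top\rho_R(x/\lambda_{\rm lb})$ has Jacobian of norm $O(1/\lambda_{\rm lb})$ and second derivative of order $O(1/(R\lambda_{\rm lb}^2))$. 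Hence $\nabla(aC_T\circ v)$ and $\nabla(aW_T\circ v)$ are Lipschitz with constants of order $a/\lambda_{\rm lb}^2=\Theta(1)$. For the bounded-range terms we use $R=\Theta(\sqrt T)$ together with $\|\nabla C_T\|=O(\sqrt T)$ on the relevant ball. Then $|u|\le a\sup|W_T|=O(\eps^2)$ and $\|\nabla u\|\le (a/\lambda_{\rm lb})\sup\|\nabla W_T\|=O(\eps)$, which gives \eqref{eq:lb-frontier-size}. Finally, $P(0)-\inf P\le a\cdot 12T\le 1/4$ once the constants in $a=\Theta(\eps^2)$ and $T=\Theta(\eps^{-2})$ are tuned, since the quadratic term is nonnegative. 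Appendix~\ref{lb:sec:scaling} records these scalings.

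\textbf{Progress bound.} Define $\prog(x)$ as the largest index $i$ with $|\langle u_i,\rho_R(x/\lambda_{\rm lb})\rangle|\ge 1/4$. The zero-chain structure gives the key observation: if every $v_j(x)$ with $j>k$ is small, then $\nabla A(x)$ depends on $U$ only through $u_1,\dots,u_k$. The reason is that $B_T$ sees only the first $k$ coordinates, and the quadratic term carries no $U$ dependence. Outside the event $\zeta=1$, the components $\frac{\zeta}{\varpi}u(x)$ and $\frac{\zeta}{\varpi}\nabla u(x)$ vanish. When $\zeta=1$ they can reveal at most coordinate $k+1$, because $\Theta_i$ kills the terms with $i>k+1$.

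So each call reveals a new chain direction only with probability at most $\varpi$, with independent draws. We then apply the random-rotation argument of \citet{arjevani2023lowerbounds} and \citet{woodworth2017lower}, taking $d\ge T$ large, e.g.\ $d=\Omega(T\cdot cT/\varpi\cdot\log)$. With high probability, every query made before direction $u_{k+1}$ is revealed has $|\langle u_j,\rho_R(x/\lambda_{\rm lb})\rangle|<1/4$ for all unrevealed $j$. This holds because $\|\rho_R\|\le R=O(\sqrt T)$ and an unrevealed $u_j$ is nearly uniform on a sphere in the orthogonal complement, so the inner product concentrates at scale $R/\sqrt d$. This is the main obstacle: the oracle responses themselves must leak nothing about unrevealed directions, which requires a careful conditional-independence argument via the Bernoulli variables. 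I would follow the Arjevani et al.\ Lemma verbatim, treating $(\nabla A,\zeta u/\varpi,\zeta\nabla u/\varpi)$ as a function of $(x,\zeta,u_{1:k+1})$.

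\textbf{Counting and conversion.} The number of reveals after $N=cT/\varpi$ calls is stochastically dominated by $\mathrm{Binomial}(N,\varpi)$, whose mean is $cT$. Choosing $c$ small, Markov or Chernoff bounds make the probability of reaching progress $T$ at most $1/8$. Including one extra query at $x_{\rm out}$ and the rotation failure probability, also at most $1/8$, we get $\prog(x_{\rm out})<T$ with probability at least $3/4$.

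It remains to show that low progress forces a large gradient, i.e.\ $\|\nabla P(x)\|\ge 2\eps$, by cases on $\|x\|$:
\begin{itemize}[leftmargin=1.5em]
\item If $\|x\|$ is large, say $\ge R\lambda_{\rm lb}/2$, the quadratic term $\frac{a}{5\lambda_{\rm lb}^2}x$ dominates the compressed chain gradient, whose size is $O(a\sqrt T/\lambda_{\rm lb})$ suitably damped by $\rho_R$. This yields $\|\nabla P(x)\|\ge 2\eps$.
\item Otherwise $\rho_R$ has a well-conditioned Jacobian. Since $|v_T|<1/4$ means $\|\nabla C_T(v)\|>1$, we get $\|\nabla P\|\gtrsim (a/\lambda_{\rm lb})\cdot 1$ minus the quadratic contribution. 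This is $\ge 2\eps$ after fixing the numerical constants in $a$ and $\lambda_{\rm lb}$.
\end{itemize}
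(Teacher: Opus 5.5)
Your architecture matches the paper's. You scale the compressed chain, treat the three channels of $\mathcal R_\varpi$ as a probability-$\varpi$ zero-chain, and run the random-rotation argument with $x_{\rm out}$ as one extra point. You then count reveals with a binomial/Chernoff bound and convert low progress into a large gradient by splitting on $\|x\|$. The genuine gap is in the last step, in the case $\|x\|\le R\lambda_{\rm lb}/2$.

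Write $w=x/\lambda_{\rm lb}$, $\bar v=\rho_R(w)$, $h=U^\top\bar v$ and $c=\sqrt{1+\|w\|^2/R^2}$. Then $\nabla P(x)=\frac{a}{\lambda_{\rm lb}}\bigl[c^{-1}(I-\bar v\bar v^\top/R^2)U\nabla C_T(h)+\eta_{\rm r}w\bigr]$, with $\eta_{\rm r}=1/5$ fixed by the construction.
\begin{itemize}
\item From $\|\nabla C_T(h)\|>1$ and a well-conditioned Jacobian you only get an order-one lower bound on the chain term.
\item The quadratic term $\eta_{\rm r}\|w\|$ can be as large as $R/10=23\sqrt T$ in this region.
\item The ratio of the two terms does not involve $a$ or $\lambda_{\rm lb}$, so no tuning of those constants makes ``chain term minus quadratic contribution'' positive.
\item For $\|w\|$ between a constant and $R/2$, neither your small-norm nor your large-norm estimate applies. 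A norm lower bound on $\nabla C_T$ alone cannot exclude cancellation by the quadratic term.
\end{itemize}

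The missing idea is to test the gradient against the single next hidden direction. Let $j=\prog_1(h)+1\le T$, so $|h_j|\le1$. Use the coordinatewise chain property $|\partial_jC_T(h)|\ge1$ from Lemma~\ref{lb:lem:hard-chain}(v), which is stronger than the norm statement you cite. The quadratic term's component along $u_j$ is only $\eta_{\rm r}c\,h_j$. Together with $c\le\sqrt5/2$, $\|\bar v\|\le R/2$ and $\|\nabla C_T\|\le 23\sqrt T$, this gives
\[
|\langle u_j,\nabla C^{\rm c}_U(w)\rangle|
\ge\frac{1}{c}-\frac{|h_j|\,|\langle\bar v,U\nabla C_T(h)\rangle|}{cR^2}-\eta_{\rm r}c|h_j|
\ge\frac{2}{\sqrt5}-\frac{1}{20}-\frac{\sqrt5}{10}>\frac12 .
\]
Multiplying by $a/\lambda_{\rm lb}=4\eps$ gives $\|\nabla P(x)\|\ge2\eps$. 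This is Lemma~\ref{lb:lem:compressed-hard-chain}(iii), and it is exactly where $R=230\sqrt T$ and $\eta_{\rm r}=1/5$ matter.

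Two smaller corrections:
\begin{itemize}
\item The summands $\psi_i$ with $i\ge k+2$ vanish because $\Psi$ is flat on $(-\infty,1/2]$, not because of $\Theta_i$. The weights do two other jobs. First, $\Theta_{k+1}$ equals one to second order, which deletes the $(k+1)$st link from $B_T$. Second, $\Theta_i=0$ for $i\le\prog_{1/2}(v)$, which makes $W_T$ single-frontier and hence bounded independently of $T$.
\item The scalar channel $\frac{\zeta}{\varpi}u(x)$ is a channel separate from the two vector channels, not part of the value/gradient pair of one stochastic function. The lemma of \citet{arjevani2023lowerbounds} therefore does not apply verbatim. The paper handles this by conditioning on the full past latent queries $U^\top x_q$, which determine all scalar responses, in a finite-channel rotation lemma (Lemmas~\ref{lb:lem:conditional-haar}--\ref{lb:lem:finite-random-rotation}).
\end{itemize}
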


The two vector responses in $\mathcal R_\varpi$ arise from
$U\nabla B_T(v)$ and $U(\zeta/\varpi)\nabla W_T(v)$. %
Along an exposed coordinate prefix, these vectors can add at most one
direction, and only when $\zeta=1$
(Corollary~\ref{lb:cor:two-channel-zero-chain}), giving a mean waiting
time $1/\varpi$ between opportunities for progress.
For arbitrary queries, the proof conditions on the past latent coordinates
$v(x_t)$, oracle draws, and exposed directions, which together determine
all past scalar responses.
The residual parts of unrevealed directions orthogonal to past queries
and exposed directions remain spherically symmetric, allowing
high-dimensional concentration to control accidental progress
(Lemmas~\ref{lb:lem:conditional-haar}--\ref{lb:lem:finite-random-rotation}).
The same geometric bound covers the final output as an additional point
without an oracle call (Corollary~\ref{lb:cor:terminal-output}).
Appendix~\ref{lb:sec:comparison} gives the full proof.

Choosing $\varpi=2(1+C^2)\eps^2$ recovers the classical
$\Omega(\eps^{-4})$ lower bound for stochastic nonconvex minimization
under an ordinary first-order oracle; see
Appendix~\ref{lb:sec:first-order}.  More importantly, the comparison-oracle
construction enables a stronger bilevel lower bound: after lifting the
hard instance to a bilevel problem, we can choose a even smaller reveal
probability $\varpi$ while preserving bounded variance, which increases
the number of oracle calls required.

\subsection{A bilevel hard instance}
\label{subsec:lb-embeddings}

We use the same pair $(A,u)$ to construct a bilevel hard instance and its
primitive first-order oracle. For a scale $\eps\le\delta\le1$, let
$x\in\R^d$ and let the lower-level variable be scalar, $y\in\R$. Define
the upper- and lower-level objectives by
\begin{equation}
\label{eq:lb-scalar-example}
  f_\delta(x,y):=A(x)+y,\qquad
  g_\delta(x,y):=\tfrac12y^2+\delta\sin(y/\delta)u(x).
\end{equation}
At a query point $(x,y)$, draw one response from the comparison oracle,
\(
(r_1,r_2,r_3)=\mathcal R_\varpi(x)
=
\left(
\nabla A(x),\frac{\zeta}{\varpi}u(x),
\frac{\zeta}{\varpi}\nabla u(x)
\right)\), 
and transform it into the primitive bilevel oracle response:
\begin{equation}
\label{eq:lb-scalar-oracle}
\hat F_x=r_1,\qquad \hat F_y=1,\qquad
\hat G_x=\delta\sin(y/\delta)r_3,\qquad
\hat G_y=y+\cos(y/\delta)r_2.
\end{equation}
These are
exactly the stochastic gradients of $f_\delta$ and $g_\delta$, with the
same Bernoulli variable $\zeta$ appearing in both lower-level gradient
blocks. Hence, one comparison-oracle call exactly simulates one primitive
bilevel oracle call, including its joint randomness.

The terms carrying the hidden correction $u$ are
$\delta\sin(y/\delta)\nabla u(x)$ and $\cos(y/\delta)u(x)$.
Their joint norm is $O(\delta\eps)$ uniformly over $(x,y)$, since
$\|\nabla u\|=O(\eps)$, $|u|=O(\eps^2)$, and $\delta\ge\eps$.
We refer to this quantity as the \emph{signal size}: it measures the
magnitude of the deterministic terms involving $u$ before multiplication
by $\zeta/\varpi$. These terms contain the information needed to recover
the correction $-\nabla u$ in $\nabla P=\nabla A-\nabla u$.%

The hyperobjective is
$\Phi_\delta(x)=A(x)+y_\delta^\star(x)$, where
\[
  y_\delta^\star+u(x)\cos(y_\delta^\star/\delta)=0,
  \qquad |y_\delta^\star|\le|u(x)|.
\]
Since $|u|/\delta=O(\eps)$, the lower solution is close to $-u$.
Implicit differentiation gives the corresponding gradient estimate. The following proposition verifies that the bilevel construction satisfies Assumption~\ref{ass:base} and that its hyperobjective remains close to the hard minimization objective $P$.

\begin{proposition}[Bilevel gradient and primitive variance]
\label{prop:lb-scalar-transfer}
For each pair in Proposition~\ref{prop:lb-comparison}, every
$\eps\le\delta\le1$, and sufficiently small $\eps$, the family
\eqref{eq:lb-scalar-example} satisfies Assumption~\ref{ass:base} with
uniformly bounded constants, has
$\Phi_\delta(0)-\inf\Phi_\delta\le1$ and $|y_\delta^\star(0)|\le1$, and obeys
\begin{equation}
\label{eq:lb-retained-correction}
  \sup_x\|\nabla\Phi_\delta(x)-\nabla P(x)\|
  \le C_{\rm lift}\eps^5/\delta^2\le C_{\rm lift}\eps^3
\end{equation}
for a numerical $C_{\rm lift}>0$.
The oracle \eqref{eq:lb-scalar-oracle} is conditionally unbiased,
has joint variance at most $C_{\rm lift}\delta^2\eps^2/\varpi$,
and is simulated by one call to $\mathcal R_\varpi$.
\end{proposition}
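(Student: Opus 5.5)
The plan is a direct computation on the scalar instance \eqref{eq:lb-scalar-example}, using only the bounds of Proposition~\ref{prop:lb-comparison}: $\nabla A,\nabla u$ are $C$-Lipschitz, $|u|\le C\eps^2$, and $\|\nabla u\|\le C\eps$.

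\textbf{Step 1 (Assumption~\ref{ass:base}).} Compute $\partial_{yy}g_\delta=1-\delta^{-1}\sin(y/\delta)u(x)$. Since $|u|/\delta\le C\eps$, this is at least $1/2$ for small $\eps$, which gives $\mu=1/2$. The mixed derivative is $\nabla^2_{xy}g_\delta=\cos(y/\delta)\nabla u(x)$, and its $x$-Hessian block is $\delta\sin(y/\delta)\nabla^2u$. Every entry of the joint Hessian of $g_\delta$ is then $O(1)$, because $|u|/\delta$, $\|\nabla u\|$ and $\delta\|\nabla^2 u\|$ are all $O(1)$.

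For the third derivatives that enter $\rho_1$:
\begin{itemize}
\item $\partial_{yyy}g=-\delta^{-2}\cos(y/\delta)u=O(\eps^2/\delta^2)=O(1)$;
\item $\nabla_x\partial_{yy}g=-\delta^{-1}\sin(\cdot)\nabla u=O(\eps/\delta)=O(1)$;
\item $\partial_y$ of the mixed block is $-\delta^{-1}\sin(\cdot)\nabla u=O(1)$;
\item $\nabla_x$ of the mixed block is $\cos(\cdot)\nabla^2u$. Its Lipschitzness in $x$ needs only the bound on $\nabla^2 u$ (not Lipschitz), since the assumption asks that the Hessian blocks be Lipschitz. Here I would invoke the boundedness of the Hessian of $W_T$ together with the Lipschitz constant of $\nabla u$.
\end{itemize}
Also $f_\delta$ is smooth with $\nabla_y f_\delta=1$, so $B_f=1$.

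For the hyperobjective bounds, the fixed-point equation gives $|y^\star_\delta|\le|u|\le C\eps^2$. Hence $\Phi_\delta(0)-\inf\Phi_\delta\le P(0)-\inf P+2\sup|u|+2\sup|y^\star|\le 1/4+O(\eps^2)\le1$, using $A=P+u$.

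\textbf{Step 2 (hypergradient closeness), the main step.} Apply \eqref{eq:hypergradient} with $z^\star=1/H$ to get
\[
\nabla\Phi_\delta=\nabla A-\frac{\cos(y^\star/\delta)\nabla u}{1-\delta^{-1}\sin(y^\star/\delta)u}.
\]
Subtract $\nabla P=\nabla A-\nabla u$ to obtain
\[
\nabla\Phi_\delta-\nabla P=\nabla u\Bigl(1-\frac{\cos(y^\star/\delta)}{1-\delta^{-1}\sin(y^\star/\delta)u}\Bigr).
\]
Expand with $s:=y^\star/\delta$, where $|s|\le C\eps^2/\delta$:
\begin{itemize}
\item $1-\cos s=O(s^2)=O(\eps^4/\delta^2)$;
\item $\delta^{-1}\sin(s)\,u=O(s\,|u|/\delta)=O(\eps^4/\delta^2)$.
\end{itemize}
So the bracket is $O(\eps^4/\delta^2)$. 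Multiplying by $\|\nabla u\|\le C\eps$ gives $C_{\rm lift}\eps^5/\delta^2$, and $\delta\ge\eps$ gives the second inequality in \eqref{eq:lb-retained-correction}.

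The delicate point is that the linear term cancels: $\sin s\approx s$ is multiplied by $u/\delta$, so it contributes only at second order. I will carry the Taylor bounds carefully, checking the implicit function theorem through $\partial_{yy}g\ge1/2$.

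\textbf{Step 3 (oracle).} Each component in \eqref{eq:lb-scalar-oracle} is a measurable function of $(x,y)$ and one draw of $\mathcal R_\varpi(x)$, so one primitive call costs one comparison call. Unbiasedness follows from $\E[\zeta/\varpi]=1$ and fresh $\zeta$ conditioned on the history. The components $\hat F_x,\hat F_y$ are exact.

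For the variance, note $\E[(\zeta/\varpi-1)^2]=(1-\varpi)/\varpi\le1/\varpi$. The total error is $(\zeta/\varpi-1)$ times $\bigl(\delta\sin(y/\delta)\nabla u,\ \cos(y/\delta)u\bigr)$. The squared norm of that vector is at most $\delta^2C^2\eps^2+C^2\eps^4\le 2C^2\delta^2\eps^2$, using $\eps\le\delta$. This yields the bound $C_{\rm lift}\delta^2\eps^2/\varpi$ after enlarging $C_{\rm lift}$.
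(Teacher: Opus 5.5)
Your proposal is correct and follows essentially the paper's proof. The checks are the same: the Hessian blocks and the $x$/$y$ Lipschitz moduli of $J$ and $H$ for Assumption~\ref{ass:base}, and the $(1-\varpi)/\varpi$ variance computation with $\eps\le\delta$. Your implicit-hypergradient formula is exactly the paper's $\nabla\Phi_\delta=\nabla A+Y_\delta'(u)\nabla u$, with the same $O(u^2/\delta^2)$ bound on $|Y_\delta'+1|$; the paper itself notes that the estimate can be read off the implicit hypergradient. The only difference is cosmetic: you bound the objective gap with the cruder $|u|+|y^\star_\delta|=O(\eps^2)$ instead of $|Y_\delta(s)+s|\le|s|^3/(2\delta^2)$, which is enough here.
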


Appendix~\ref{lb:sec:lift} proves the proposition.
Choosing $\varpi_\delta=\Theta(\delta^2\eps^2)$  keeps the primitive variance at most one.
On the event $\|\nabla P\|\ge2\eps$, the gradient estimate gives
$\|\nabla\Phi_\delta\|\ge3\eps/2$ for small enough $\eps$.
The one-call simulation therefore transfers
Proposition~\ref{prop:lb-comparison} to the bilevel objective, giving
\begin{equation}
\label{eq:lb-two-rates}
  \Omega(T/\varpi_\delta)=\Omega(\eps^{-4}\delta^{-2})
\end{equation}
primitive calls.
The intuition is that the sinusoidal lower-level construction attenuates the primitive signal while preserving the relevant hypergradient information, allowing the same variance budget to support a smaller reveal probability.

\subsection{Lower bound and minimax complexity}
\label{subsec:lb-minimax}

Decreasing $\delta$ increases the higher-order smoothness constants of
$g$ with respect to the lower variable. For the construction in
\eqref{eq:lb-scalar-example}, the order-$p$ smoothness constants in
Assumption~\ref{ass:order-p} are bounded by
$C\eps\delta^{-p}$ for $\ell=x$ and
$C\eps^2\delta^{-(p+1)}$ for $\ell=y$, where $C$ is a numerical
constant independent of $p$. Choosing $\delta=\eps^{1/p}$ keeps these
quantities, as well as the base regularity constants, uniformly bounded.
Substituting this choice into \eqref{eq:lb-two-rates} yields the exponent
$4+2/p$.

For $M\ge1$, let $\mathcal P_p(M)$ be the class of problem--oracle pairs
satisfying Assumptions~\ref{ass:base}--\ref{ass:oracle}, in arbitrary
finite dimensions, with
\begin{gather}
\max\{\ell_f,\ell_g,\rho_1,L_p,B_f,\sigma,\mu^{-1}\}\le M,
\label{eq:lb-class-envelope}\\
\Phi(0)-\Phi_{\inf}\le1,\qquad \|y^\star(0)\|\le1.
\label{eq:lb-class-initial}
\end{gather}
Queries and the final output may be arbitrary measurable functions of
past oracle responses and the algorithm's internal randomness.

\begin{theorem}[Lower bound for bilevel problems]
\label{thm:lower}
There exist numerical constants $M_0\ge1$, $c>0$, and $\eps_0>0$ such that
the following holds for every integer $p\ge1$ and $0<\eps<\eps_0$.
For any adaptive randomized algorithm using at most
$c\eps^{-4-2/p}$ primitive oracle calls, there is a pair in
$\mathcal P_p(M_0)$ on which its output satisfies
\begin{equation}
\label{eq:lb-main-moment}
\E\|\nabla\Phi(x_{\rm out})\|^2>\eps^2.
\end{equation}
\end{theorem}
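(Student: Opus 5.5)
The plan is to instantiate Proposition~\ref{prop:lb-comparison} and Proposition~\ref{prop:lb-scalar-transfer} with $\delta=\eps^{1/p}$ and $\varpi=\varpi_\delta:=\delta^2\eps^2/C_{\rm lift}$, clipped at $1/2$. With this choice the primitive variance bound $C_{\rm lift}\delta^2\eps^2/\varpi$ of Proposition~\ref{prop:lb-scalar-transfer} equals one, so $\sigma\le1$. Since $\eps\le\eps^{1/p}=\delta\le1$, the proposition also applies. It gives Assumption~\ref{ass:base} with uniformly bounded numerical constants. It also gives $\Phi_\delta(0)-\inf\Phi_\delta\le1$ and $|y_\delta^\star(0)|\le1$, which is \eqref{eq:lb-class-initial}. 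The strong convexity constant should be near $1$: we have $\partial_y^2 g_\delta=1-\delta^{-1}\sin(y/\delta)u(x)$ and $|u|/\delta\le C\eps^2/\delta\le C\eps$.

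Next I verify Assumption~\ref{ass:order-p}. The $y$-derivatives of $\nabla_x g_\delta=\delta\sin(y/\delta)\nabla u(x)$ of order $p+1$ are bounded by $\delta^{-p}\|\nabla u\|\le C\eps\delta^{-p}$. The corresponding derivatives of $\nabla_y g_\delta=y+\cos(y/\delta)u(x)$ are bounded by $\delta^{-(p+1)}|u|\le C\eps^2\delta^{-(p+1)}$. This gives the Lipschitz constants $L_p$ claimed before the theorem. At $\delta=\eps^{1/p}$ both are at most $C$, because $\eps\delta^{-p}=1$ and $\eps^2\delta^{-p-1}=\eps^{1-1/p}\le1$. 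For $p=1$, the $y$-part is covered by the $\rho_1$ bound. Together with $B_f=1$, this places every instance in $\mathcal P_p(M_0)$ for a numerical $M_0$ independent of $p$. The exact-derivative bound holds by taking $(p+1)$-st derivatives of $\sin$ and $\cos$, whose magnitudes stay at most $1$.

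For the reduction, fix any algorithm making at most $c\eps^{-4-2/p}$ primitive calls. By \eqref{eq:lb-scalar-oracle}, each primitive call is produced exactly by one call to $\mathcal R_\varpi$ followed by a deterministic transformation depending on the query $y$. Composing the algorithm with this simulator therefore gives an adaptive randomized algorithm for the comparison oracle with the same number of calls and the same output law. I need $c\eps^{-4-2/p}\le c'T/\varpi$, where $c'$ is the constant of Proposition~\ref{prop:lb-comparison}. Since $T=\Theta(\eps^{-2})$ and $\varpi=\Theta(\eps^{2+2/p})$, we have $T/\varpi=\Theta(\eps^{-4-2/p})$ with numerical constants. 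Choosing $c$ small enough secures the inequality, and shrinking $\eps_0$ ensures $\varpi\le1/2$. Proposition~\ref{prop:lb-comparison} then gives $\|\nabla P(x_{\rm out})\|\ge2\eps$ with probability at least $3/4$, averaged over $U$. By \eqref{eq:lb-retained-correction}, on that event $\|\nabla\Phi_\delta(x_{\rm out})\|\ge2\eps-C_{\rm lift}\eps^3\ge3\eps/2$ for small $\eps$. Hence $\E\|\nabla\Phi_\delta(x_{\rm out})\|^2\ge\frac34\cdot\frac94\eps^2>\eps^2$, where the expectation also averages over $U$.

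The main obstacle is passing from the average over the random rotation $U$ to a single pair in $\mathcal P_p(M_0)$. I will handle it by averaging: the expectation over $U$ of $\E[\|\nabla\Phi_\delta(x_{\rm out})\|^2\mid U]$ exceeds $\eps^2$, so some realization $U_0$ does as well. This needs $U$ to be independent of the algorithm seed and oracle draws, which Proposition~\ref{prop:lb-comparison} guarantees, and every realization must satisfy the class constraints, which holds because all the bounds above are uniform in $U$. A secondary point to check is that the constants from both propositions are genuinely independent of $p$, so that a single $M_0$, $c$, $\eps_0$ works for every $p$.
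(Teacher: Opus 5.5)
Your proposal follows the paper's proof essentially step for step: $\delta=\eps^{1/p}$, $\varpi=\Theta(\delta^2\eps^2)$, one-call simulation of the primitive oracle by $\mathcal R_\varpi$, Proposition~\ref{prop:lb-comparison} together with the gradient perturbation bound to get $\|\nabla\Phi\|\ge 3\eps/2$ with probability at least $3/4$, and averaging over $U$ to fix a single instance in $\mathcal P_p(M_0)$ with all constants independent of $p$. There is one small arithmetic slip. With $\varpi=\delta^2\eps^2/C_{\rm lift}$, the variance bound $C_{\rm lift}\delta^2\eps^2/\varpi$ equals $C_{\rm lift}^2$, not one, so you want $\varpi=C_{\rm lift}\delta^2\eps^2$ (the paper takes $\varpi=2(1+C_{\rm var})\delta^2\eps^2$, giving $\sigma^2\le1/2$). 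The slip is harmless either way, because $\sigma$ only needs to be bounded by the numerical constant $M_0$, and a smaller $\varpi$ only enlarges the admissible budget.
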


Appendix~\ref{lb:sec:main-proof} gives the proof.
The hard instances have $d_y=1$ and can be constructed with
$d_x=O(\eps^{-6-2/p}\log(1/\eps))$.
Theorem~\ref{thm:lower} shows that the upper bound in
Theorem~\ref{thm:main} is tight in its dependence on $\eps$ for every
fixed finite $p$ and fixed problem constants.

\subsection*{AI use statement}

We used OpenAI GPT-5.5 to assist with literature search and reference
checking, manuscript drafting and revision, and selected mathematical
computations and proof checking. The algorithmic design and the main
convergence proof were developed by the authors; GPT-5.5 was used to help
verify intermediate calculations, and
improve the presentation. The initial lower-bound construction emerged
through iterative discussions between the authors and GPT-5.5. The authors
subsequently simplified the argument, reorganized the proof, and developed
the current comparison-oracle and bilevel-lifting framework. The authors
take full responsibility for the final manuscript, including all
mathematical claims, proofs, references, and interpretations.

\bibliography{references}
\bibliographystyle{plainnat}

\clearpage
\appendix
\section{Proof of Theorem~\ref{thm:main}}
\label{app:upper-proof}

This appendix proves the guarantee for Algorithm~\ref{alg:mrt-fd} with a
common difference radius and batch size.  We first collect constants, then
establish regularity, control the three updates, and sum a Lyapunov
inequality.  Throughout, Assumptions~\ref{ass:base}--\ref{ass:oracle}
hold, and the initial points $x_0,y_0$ are fixed with $z_0=0$.

\subsection{Constants and parameter choices}
\label{app:parameters}

Write $L=\ell_g$ and use $\Phi_{\inf}=\inf_x\Phi(x)$, as in
Section~\ref{subsec:complexity}.
Define the geometry constants
\begin{equation}
\label{eq:app-geometry-constants}
q_0:=\frac{B_f}{\mu},\quad
L_y:=\frac{L}{\mu},\quad
B:=\ell_f+\rho_1q_0,\quad
L_z:=\frac{(1+L_y)B}{\mu},\quad
L_\Phi:=(1+L_y)B+LL_z.
\end{equation}
The projection radius is $R_z=4q_0$, as in Section~\ref{sec:problem}.
The chosen nodes and weights define finite constants
\begin{equation}
\label{eq:app-stencil-constants}
A_p:=\sum_{j\in\mathcal I_p}a_j^2,\qquad
C_p:=\frac{1}{(p+1)!}
       \sum_{j\in\mathcal I_p}|a_j|\,|j|^{p+1}.
\end{equation}

For the lower and auxiliary steps, set
\begin{equation}
\label{eq:app-tracking-constants}
\eta:=\frac{1}{2L},\qquad
r:=\mu\eta,\qquad
\omega:=1+\frac{36B^2}{\mu^2},\qquad
\rho:=\frac r4.
\end{equation}
The remaining constants used in the proof are
\begin{align}
{C_{\rm a}}&:=3\left(1+\frac{3L}{\mu}\right),&
C&:=3L^2+\frac{B^2{C_{\rm a}}}{\omega},&
D&:=\frac{3}{r}\left(L_z^2+\omega L_y^2\right),
\label{eq:app-cdu}\\
N_{\rm b}&:=\frac{9\eta}{\mu},&
N_{\rm v}&:=\frac32(1+\omega)\eta^2.
\label{eq:app-noise-constants}
\end{align}
All step-size denominators below are positive.  In particular, strong
convexity and gradient Lipschitzness imply $0<\mu\le L$, and $0<r\le1/2$.
Choose
\begin{equation}
\label{eq:app-steps}
\eta_y=\eta_z=\eta,\qquad
\eta_x=\alpha:=
\min\left\{\frac{1}{2L_\Phi},
           \sqrt{\frac{\rho}{16CD}}\right\},\qquad
\lambda:=\frac{2\alpha C}{\rho}.
\end{equation}
Define the error and accuracy constants
\begin{align}
S_{\rm b}&:={C_{\rm a}}+\frac{2C N_{\rm b}}{\rho},&
S_{\rm v}&:=2+\frac{2C N_{\rm v}}{\rho},
\label{eq:app-error-constants}\\
C_{\rm b}&:=4S_{\rm b}(C_pL_pR_z)^2,&
C_{\rm v}&:=4S_{\rm v}\sigma^2(1+A_pR_z^2).
\label{eq:app-accuracy-constants}
\end{align}
For the constant in Theorem~\ref{thm:main}, take
\[
C_\star:=\max\left\{1,\frac{4}{\alpha}\max\{1,\lambda\omega\},
                         C_{\rm b},C_{\rm v}\right\}.
\]
These quantities depend only on the problem parameters and $p$, not on
$\eps$, the batch size, or the iteration count.

The errors below use the target at $x_k$, including after the $z$ update:
\begin{align}
\delta_k&:=y_k-y^\star(x_k),&
e_k&:=z_k-z^\star(x_k),&
\bar e_k&:=z_{k+1}-z^\star(x_k), \label{eq:app-errors}\\
T_k&:=\|e_k\|^2+\omega\|\delta_k\|^2,&
\mathcal V_k&:=\Phi(x_k)-\Phi_{\inf}+\lambda T_k.
\label{eq:app-potential}
\end{align}
Since $z_0=0$, we have $\|e_0\|\le q_0$.
Using $\omega\ge1$ and the initial bound in Section~\ref{subsec:complexity},
\begin{equation}
\label{eq:app-initial-bound}
\begin{aligned}
\mathcal V_0
&\le \max\{1,\lambda\omega\}
\left(\Phi(x_0)-\Phi_{\inf}+q_0^2+\|y_0-y^\star(x_0)\|^2\right)\\
&\le \max\{1,\lambda\omega\}\Delta_0.
\end{aligned}
\end{equation}
An upper bound suffices; the algorithm does not
need to query the exact lower solution or the objective value to implement
the parameter rule.

We will prove, for $0<\tau\le1$ and positive integers $m,K$, that
\begin{equation}
\label{eq:app-finite-time}
\frac1K\sum_{k=0}^{K-1}\E\|\nabla\Phi(x_k)\|^2
\le C_\star\left(\frac{\Delta_0}{K}
 +\tau^{2p}+\frac{1}{m\tau^2}\right).
\end{equation}

\subsection{Geometry and errors from the lower iterate}

\begin{lemma}[Regularity of the lower solution and auxiliary target]
\label{lem:app-geometry}
Under Assumption~\ref{ass:base}, the lower solution exists uniquely for every
$x$.  Moreover,
\begin{align}
\|y^\star(x)-y^\star(x')\|
 &\le L_y\|x-x'\|, \label{eq:app-y-lipschitz}\\
\|z(x,y)\|,\ \|z^\star(x)\|&\le q_0, \label{eq:app-z-bounded}\\
\|z^\star(x)-z^\star(x')\|
 &\le L_z\|x-x'\|. \label{eq:app-z-lipschitz}
\end{align}
The hypergradient identity \eqref{eq:hypergradient} holds, and $\nabla\Phi$
is $L_\Phi$-Lipschitz.
\end{lemma}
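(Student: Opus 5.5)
The plan is to prove the four claims in order: existence and uniqueness of $y^\star$, the Lipschitz bound on $y^\star$, the boundedness and Lipschitz bound for the auxiliary targets, and finally the hypergradient formula with $L_\Phi$-smoothness of $\Phi$.

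\emph{Existence and the Lipschitz bound \eqref{eq:app-y-lipschitz}.} For each fixed $x$, the function $g(x,\cdot)$ is continuous and $\mu$-strongly convex on $\R^{d_y}$ by Assumption~\ref{ass:base}(ii). It is therefore coercive and has a unique minimizer $y^\star(x)$, characterized by $\nabla_y g(x,y^\star(x))=0$. For the Lipschitz bound, strong monotonicity of $\nabla_y g(x,\cdot)$ gives
\[
\mu\|y^\star(x)-y^\star(x')\|^2\le\ip{\nabla_y g(x,y^\star(x))-\nabla_y g(x,y^\star(x'))}{y^\star(x)-y^\star(x')}.
\]
The first gradient in this inner product is zero. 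We then replace it by $\nabla_y g(x',y^\star(x'))=0$ and use the joint $L$-Lipschitz continuity of $\nabla g$. Dividing by $\|y^\star(x)-y^\star(x')\|$ yields the constant $L/\mu=L_y$.

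\emph{Boundedness \eqref{eq:app-z-bounded}.} Since $H\succeq\mu I$, we have $\|H^{-1}\|_{\op}\le1/\mu$. Combined with $\|b\|\le B_f$, this gives $\|z(x,y)\|\le q_0$, and the same bound holds for $z^\star(x)$.

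\emph{Lipschitz bound \eqref{eq:app-z-lipschitz}.} Write $w=(x,y^\star(x))$ and $w'=(x',y^\star(x'))$. By \eqref{eq:app-y-lipschitz},
\[
\|w-w'\|\le\|x-x'\|+\|y^\star(x)-y^\star(x')\|\le(1+L_y)\|x-x'\|.
\]
We use the identity $H(w)(z^\star(x)-z^\star(x'))=b(w)-b(w')-(H(w)-H(w'))z^\star(x')$. Here $\|b(w)-b(w')\|\le\ell_f\|w-w'\|$, because $\nabla f$ is jointly $\ell_f$-Lipschitz. Also $\|(H(w)-H(w'))z^\star(x')\|\le\rho_1q_0\|w-w'\|$. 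Inverting $H(w)$ then gives
\[
\|z^\star(x)-z^\star(x')\|\le\frac{(\ell_f+\rho_1q_0)(1+L_y)}{\mu}\|x-x'\|=L_z\|x-x'\|.
\]

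\emph{Hypergradient formula and smoothness of $\Phi$.} We apply the implicit function theorem to $\nabla_y g(x,y)=0$. This is valid because $g$ is twice differentiable with continuous (indeed Lipschitz) second derivatives $H$ and $J$, and $H$ is invertible. It shows that $y^\star$ is $C^1$ with Jacobian $Dy^\star(x)=-H^{-1}J^\top$. The chain rule then gives
\[
\nabla\Phi=\nabla_x f+(Dy^\star)^\top\nabla_y f=\nabla_x f-JH^{-1}b=\nabla_x f-Jz^\star,
\]
which is \eqref{eq:hypergradient}. For the Lipschitz constant, we decompose the difference of hypergradients as
\[
\nabla\Phi(x)-\nabla\Phi(x')=[\nabla_x f(w)-\nabla_x f(w')]-[J(w)-J(w')]z^\star(x)-J(w')[z^\star(x)-z^\star(x')].
\]
The first two brackets are bounded by $(\ell_f+\rho_1q_0)(1+L_y)\|x-x'\|=(1+L_y)B\|x-x'\|$. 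For the third term, $\|J\|_{\op}\le L$ since $\nabla g$ is $L$-Lipschitz, so it contributes at most $LL_z\|x-x'\|$. Summing gives exactly $L_\Phi$.

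No real obstacle is expected. The only point needing care is the differentiability of $y^\star$: the paper assumes only that $g$ is twice differentiable with Lipschitz blocks, and the argument above relies on the Lipschitz continuity of $H$ and $J$ to supply the continuity the implicit function theorem needs.
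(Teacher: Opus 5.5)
Your proposal is correct and follows the paper's proof essentially step by step. It uses the same strong-monotonicity argument for $y^\star$, the same subtracted linear-system identity for $z^\star$, and the same implicit-function-theorem and chain-rule derivation of the hypergradient; the paper also notes explicitly that global uniqueness of the zero of $\nabla_y g(x,\cdot)$ identifies the local IFT solution with $y^\star$. The only cosmetic difference is that you split $J(w)z^\star(x)-J(w')z^\star(x')$ with the roles of $x$ and $x'$ swapped relative to the paper, which gives the same constant $L_\Phi$.
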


\begin{proof}
Strong convexity gives a coercive quadratic lower bound for $g(x,\cdot)$,
so continuity gives existence and strong convexity gives uniqueness.
The first-order condition is $\nabla_y g(x,y^\star(x))=0$.
For $a=y^\star(x)$ and $a'=y^\star(x')$, strong monotonicity implies
\[
\mu\|a-a'\|^2
\le
\langle \nabla_y g(x,a)-\nabla_y g(x,a'),a-a'\rangle.
\]
Substitute the two first-order conditions and apply the Lipschitz bound in
$x$.  The right-hand side is at most
$L\|x-x'\|\|a-a'\|$, proving \eqref{eq:app-y-lipschitz}, including the
case $a=a'$.

Differentiability of the Lipschitz gradient and lower strong convexity give
\[
\mu I\preceq H(x,y)\preceq LI,\qquad
\|J(x,y)\|_{\op}\le L,\qquad
\|H(x,y)^{-1}\|_{\op}\le\mu^{-1}.
\]
The bound on $\nabla_y f$ now proves \eqref{eq:app-z-bounded}.
The derivatives of $(x,y)\mapsto\nabla_y g(x,y)$ are $J(x,y)^\top$ and
$H(x,y)$; their continuity follows from the matrix Lipschitz bounds in
Assumption~\ref{ass:base}(iii).
The implicit-function theorem therefore applies locally at each lower
solution.  Uniqueness makes these local solution maps agree.  It follows
that
\[
D_xy^\star(x)=-H(x,y^\star(x))^{-1}J(x,y^\star(x))^\top.
\]
The chain rule proves \eqref{eq:hypergradient}.

For brevity in the remaining estimates, let
$s=(x,y^\star(x))$, $s'=(x',y^\star(x'))$, $q=z^\star(x)$, and
$q'=z^\star(x')$.  Subtracting the two auxiliary systems gives
\[
H(s)(q-q')
=b(s)-b(s')-[H(s)-H(s')]q'.
\]
By Assumption~\ref{ass:base}(iii),
$\|H(s)-H(s')\|_{\op}\le\rho_1\|s-s'\|$.
Use this bound, $\|q'\|\le q_0$, $\|H(s)^{-1}\|_{\op}\le\mu^{-1}$, and
\[
\|s-s'\|\le\|x-x'\|+\|y^\star(x)-y^\star(x')\|
\le(1+L_y)\|x-x'\|.
\]
The resulting bound is
$\|q-q'\|\le B(1+L_y)\|x-x'\|/\mu$, proving
\eqref{eq:app-z-lipschitz}.

Finally, the difference of the hypergradients equals
\[
\nabla_x f(s)-\nabla_x f(s')
-[J(s)-J(s')]q'-J(s)(q-q').
\]
The first two terms have combined norm at most
$B(1+L_y)\|x-x'\|$, and the last term has norm at most
$LL_z\|x-x'\|$.  Their sum gives $L_\Phi$.
\end{proof}

\begin{lemma}[Errors caused by the current lower iterate]
\label{lem:app-mismatch}
For every $x,y,z$, under Assumption~\ref{ass:base},
\begin{align}
\|H(x,y)z^\star(x)-b(x,y)\|
 &\le B\|y-y^\star(x)\|, \label{eq:app-residual-mismatch}\\
\|\nabla_x f(x,y)-J(x,y)z-\nabla\Phi(x)\|
 &\le L\|z-z^\star(x)\|+B\|y-y^\star(x)\|.
\label{eq:app-gradient-mismatch}
\end{align}
\end{lemma}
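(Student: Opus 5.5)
Both bounds follow by inserting the exact identities at the lower solution $y^\star(x)$ and comparing with the evaluation point $(x,y)$. Throughout we use Assumption~\ref{ass:base}(i),(iii), the bound $\|z^\star(x)\|\le q_0=B_f/\mu$ from Lemma~\ref{lem:app-geometry}, and $\|J(x,y)\|_{\op}\le L$, which is also established there. Write $s^\star=(x,y^\star(x))$ and $s=(x,y)$, so that $\|s-s^\star\|=\|y-y^\star(x)\|$.

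For \eqref{eq:app-residual-mismatch}, we use the defining system $H(s^\star)z^\star(x)=b(s^\star)$. This gives
\[
H(s)z^\star(x)-b(s)=[H(s)-H(s^\star)]z^\star(x)-[b(s)-b(s^\star)].
\]
The first term has norm at most $\rho_1\|y-y^\star(x)\|\,q_0$ by the joint Lipschitz bound on $\nabla^2_{yy}g$. The second has norm at most $\ell_f\|y-y^\star(x)\|$, because $b=\nabla_y f$ is a block of the $\ell_f$-Lipschitz gradient of $f$. Summing the two bounds gives the factor $\ell_f+\rho_1q_0=B$.

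For \eqref{eq:app-gradient-mismatch}, we substitute the hypergradient identity \eqref{eq:hypergradient} and split:
\[
\nabla_x f(s)-J(s)z-\nabla\Phi(x)
=[\nabla_x f(s)-\nabla_x f(s^\star)]-[J(s)-J(s^\star)]z^\star(x)-J(s)(z-z^\star(x)).
\]
The first two brackets are bounded exactly as before, now using the $\rho_1$-Lipschitz property of $\nabla^2_{xy}g$. Together they give at most $B\|y-y^\star(x)\|$. The last term is bounded by $L\|z-z^\star(x)\|$ using $\|J\|_{\op}\le L$.

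There is no real obstacle here. The only point needing care is to add and subtract $J(s)z^\star(x)$, rather than $J(s^\star)z$. This ordering makes the Jacobian-difference term multiply $z^\star(x)$, which is bounded by $q_0$, instead of an arbitrary $z$. That is why the statement holds for every $z$ without assuming $z\in Q$.
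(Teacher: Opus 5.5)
Your proof is correct and matches the paper's argument essentially verbatim. For the residual bound you use the same identity based on $H(x,y^\star(x))z^\star(x)=b(x,y^\star(x))$, and for the gradient bound you use the same three-term split $[\nabla_x f(s)-\nabla_x f(s^\star)]-[J(s)-J(s^\star)]z^\star(x)-J(s)(z-z^\star(x))$, with the identical constants $\ell_f+\rho_1 q_0=B$ and $L$.
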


\begin{proof}
Write $y^\star=y^\star(x)$ and $q=z^\star(x)$.
The identity $H(x,y^\star)q=b(x,y^\star)$ gives
\[
H(x,y)q-b(x,y)
=[H(x,y)-H(x,y^\star)]q+b(x,y^\star)-b(x,y).
\]
The operator-norm Lipschitz bound on $H$ and $\|q\|\le q_0$ give
the bound $(\rho_1q_0+\ell_f)\|y-y^\star\|$, proving
\eqref{eq:app-residual-mismatch}.
For the second bound, subtract \eqref{eq:hypergradient} to obtain
\begin{align*}
&\nabla_x f(x,y)-J(x,y)z-\nabla\Phi(x)\\
&\quad=\nabla_x f(x,y)-\nabla_x f(x,y^\star)
       -[J(x,y)-J(x,y^\star)]q-J(x,y)(z-q).
\end{align*}
The three terms have norms bounded respectively by
$\ell_f\|y-y^\star\|$, $\rho_1q_0\|y-y^\star\|$, and
$L\|z-q\|$.  Their sum is \eqref{eq:app-gradient-mismatch}.
\end{proof}

\subsection{Finite differences and conditional sampling}

\begin{lemma}[Normalized finite-difference error]
\label{lem:app-fd}
The weights in \eqref{eq:stencil-moments} exist uniquely for every integer
$p\ge1$.  Under Assumptions~\ref{ass:order-p} and~\ref{ass:oracle}, the
finite differences and their sampled estimates satisfy
\eqref{eq:fd-bias}--\eqref{eq:fd-variance} with
$C_p,A_p$ from \eqref{eq:app-stencil-constants}.
More precisely, if $x,y,z$ are measurable with respect to a history
$\mathcal F$ and the endpoint batches use fresh calls after that history,
then, for $\ell\in\{x,y\}$,
\begin{align}
\E[\widehat{\FD}_\ell(x,y;z)\mid\mathcal F]
 &=\FD_\ell(x,y;z), \label{eq:app-fd-mean}\\
\|\FD_\ell(x,y;z)-\nabla^2_{\ell y}g(x,y)z\|
 &\le C_pL_p\tau^p\|z\|, \label{eq:app-fd-bias}\\
\E[\|\widehat{\FD}_\ell(x,y;z)-\FD_\ell(x,y;z)\|^2
       \mid\mathcal F]
 &\le \frac{A_p\sigma^2\|z\|^2}{m\tau^2}.
\label{eq:app-fd-var}
\end{align}
These statements include $z=0$.
\end{lemma}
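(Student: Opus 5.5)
Existence and uniqueness of the weights come first. The conditions \eqref{eq:stencil-moments} form a square linear system in the $|\mathcal I_p|$ unknowns $\{a_j\}$, with rows indexed by $r=0,1,\dots,p$. For $p\ge2$ the node set $\mathcal I_p$ has exactly $p+1$ distinct integers, so the coefficient matrix $(j^r)_{r,j}$ is a Vandermonde matrix with distinct nodes. It is therefore invertible, and the weights exist and are unique. For $p=1$ the two nodes $\{-1,1\}$ give a $2\times2$ Vandermonde system whose solution is $a_{\pm1}=\pm1/2$. For $p=2$ the condition $r=2$ forces the weight at $0$ to vanish, which recovers the central difference noted in the text.

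For the bias \eqref{eq:app-fd-bias}, the case $z=0$ is trivial: both sides vanish because $u=0$ and the prefactor $\|z\|=0$. For $z\ne0$, set $\varphi(t):=\nabla_\ell g(x,y+tu)$. By Assumption~\ref{ass:order-p}, $\varphi$ is $p$ times differentiable, and $\varphi^{(p)}(t)=\nabla_y^p\nabla_\ell g(x,y+tu)[u,\dots,u]$ is $L_p$-Lipschitz in $t$ because $\|u\|=1$. Taylor's theorem with integral remainder gives
\[
\varphi(j\tau)=\sum_{r=0}^{p}\frac{(j\tau)^r}{r!}\varphi^{(r)}(0)+R_j,
\qquad
\|R_j\|\le\frac{L_p|j\tau|^{p+1}}{(p+1)!}.
\]
Here the remainder is obtained by writing the order-$(p-1)$ expansion with an integral remainder in $\varphi^{(p)}(s)$ and subtracting $\varphi^{(p)}(0)$. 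Multiplying by $a_j$ and summing, the moment conditions annihilate every term except $r=1$, which contributes $\tau\varphi'(0)=\tau\nabla^2_{\ell y}g(x,y)u$. Multiplying by $\|z\|/\tau$ then gives
\[
\|\FD_\ell(x,y;z)-\nabla^2_{\ell y}g(x,y)z\|\le\|z\|\,\frac{L_p\tau^{p}}{(p+1)!}\sum_{j}|a_j||j|^{p+1}=C_pL_p\tau^p\|z\|.
\]
The only delicate point is that Assumption~\ref{ass:order-p} gives Lipschitzness of the $p$-th derivative rather than a $(p+1)$-th derivative, so the remainder has to be written in that integral form.

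For the mean \eqref{eq:app-fd-mean} and the variance \eqref{eq:app-fd-var}, note that $(x,y,z)$ are $\mathcal F$-measurable. Hence the query points $y+j\tau u$ and the scalar prefactors are fixed given $\mathcal F$. By \eqref{eq:oracle-mean} and the tower property, each $\bar G_{\ell,j}$ has conditional mean $\nabla_\ell g(x,y+j\tau u)$, and linearity gives \eqref{eq:app-fd-mean}. For the variance, write $\xi_{j,i}$ for the noise of the $i$-th call at node $j$. The calls are conditionally independent and have mean zero given the common history, so all cross terms vanish, including those between different nodes since separate batches are used. Each individual noise satisfies $\E\|\xi_{j,i}\|^2\le\sigma^2$ by \eqref{eq:oracle-variance}, which bounds the $\hat G_\ell$ component by the full sum. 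It follows that
\[
\E\|\widehat{\FD}_\ell-\FD_\ell\|^2=\frac{\|z\|^2}{\tau^2}\sum_j a_j^2\,\E\|\bar G_{\ell,j}-\nabla_\ell g\|^2\le\frac{\|z\|^2}{\tau^2}\sum_ja_j^2\frac{\sigma^2}{m}=\frac{A_p\sigma^2\|z\|^2}{m\tau^2}.
\]
The case $z=0$ again makes both sides zero. The main thing to check carefully is that the conditional independence in Assumption~\ref{ass:oracle} really kills the cross terms for calls made sequentially within a batch. I will handle this by conditioning successively on the history before each call, so that each noise term is a martingale difference. Orthogonality of martingale differences then yields the sum of variances.
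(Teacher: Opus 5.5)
Your proposal is correct and follows essentially the same route as the paper. Both use Vandermonde invertibility on the $p+1$ distinct nodes for the weights; the paper additionally exhibits them as $a_j=l_j'(0)$ via the Lagrange basis. Both then use the order-$(p-1)$ integral-remainder Taylor expansion with $\varphi^{(p)}(0)$ subtracted to get the $L_p|t|^{p+1}/(p+1)!$ remainder, and a centered-noise decomposition whose cross terms vanish across separately batched calls; your martingale-difference handling of within-batch cross terms is just a more explicit version of the paper's appeal to fresh independent calls.
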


\begin{proof}
The set $\mathcal I_p$ has $p+1$ distinct nodes, including for $p=1$.
Let $l_j(t)$ be its Lagrange basis polynomials.  Differentiating the
interpolation identity for a polynomial of degree at most $p$ shows that
$a_j=l_j'(0)$ satisfies \eqref{eq:stencil-moments}.
The Vandermonde matrix of distinct nodes is invertible, giving uniqueness.

Suppose first that $z\ne0$ and condition on $\mathcal F$.
Write $u=z/\|z\|$ and
$\varphi(t)=\nabla_\ell g(x,y+tu)$.
By Assumption~\ref{ass:order-p}, $\nabla_y^p\nabla_\ell g(x,\cdot)$
is $L_p$-Lipschitz.  Since $\|u\|=1$, the $p$th derivative of
$\varphi$ is also $L_p$-Lipschitz.
Taylor's formula with an integral remainder gives, for every real $t$,
\begin{align*}
\varphi(t)
 &=\sum_{r=0}^{p}\frac{t^r}{r!}\varphi^{(r)}(0)+\mathcal R_p(t),\\
\mathcal R_p(t)
 &=\frac{t^p}{(p-1)!}\int_0^1(1-s)^{p-1}
       [\varphi^{(p)}(st)-\varphi^{(p)}(0)]\,ds.
\end{align*}
In particular, including when $t<0$,
\[
\|\mathcal R_p(t)\|
\le\frac{L_p|t|^{p+1}}{(p-1)!}
     \int_0^1s(1-s)^{p-1}\,ds
=\frac{L_p|t|^{p+1}}{(p+1)!}.
\]
Thus this argument does not require a $(p+1)$st derivative.
Insert the expansion at $t=j\tau$ into \eqref{eq:stencil}.
All polynomial terms except the first derivative vanish by the moment
identities.  The retained term is
$\|z\|\varphi'(0)=\nabla^2_{\ell y}g(x,y)z$.
The remainder has norm at most
\[
\frac{\|z\|}{\tau}
\sum_j|a_j|\frac{L_p|j\tau|^{p+1}}{(p+1)!}
=C_pL_p\tau^p\|z\|.
\]

For each node $j$, set
$\nu_j:=\bar G_{\ell,j}(x,y+j\tau u)-\nabla_\ell g(x,y+j\tau u)$.
Every summand in \eqref{eq:oracle-variance} is nonnegative, so each block
$\hat G_\ell(x,y)$ has conditional variance at most $\sigma^2$.
Averaging $m$ independent calls reduces this bound to $\sigma^2/m$.
Together with \eqref{eq:oracle-mean}, this gives
\[
\E[\nu_j\mid\mathcal F]=0,\qquad
\E[\|\nu_j\|^2\mid\mathcal F]\le\sigma^2/m,\qquad
\E[\langle\nu_i,\nu_j\rangle\mid\mathcal F]=0\quad(i\ne j).
\]
The cross terms vanish because distinct nodes use distinct calls; no
independence between blocks within a single call is required.  Consequently,
\begin{align*}
\E[\|\widehat{\FD}_\ell-\FD_\ell\|^2\mid\mathcal F]
 &=\frac{\|z\|^2}{\tau^2}
       \sum_j a_j^2\E[\|\nu_j\|^2\mid\mathcal F]\\
 &\le\frac{A_p\sigma^2\|z\|^2}{m\tau^2},
\end{align*}
and the same centered-noise representation proves
\eqref{eq:app-fd-mean}.
If $z=0$, both differences and both exact matrix--vector products are
zero by definition, so every assertion follows without endpoint calls.
\end{proof}

We make the sampling convention for Algorithm~\ref{alg:mrt-fd} explicit.
Each occurrence of a center or node average uses a separate batch of $m$
fresh calls, including when query locations coincide.  The $z$ update is
sampled first.  After $z_{k+1}$ is computed, new batches are drawn for the
$x$ update; a final fresh batch is then drawn for the $y$ update.  All
center batches are evaluated at $(x_k,y_k)$, as in the algorithm.
Thus the outer finite difference uses $z_{k+1}$, whereas the auxiliary
finite difference uses $z_k$.  At termination, the output index $R$ is
drawn uniformly and independently of all oracle calls.

Let $\mathcal F_k$ contain all
calls before iteration $k$, let $\mathcal F_k^z$ additionally contain the
calls used to compute $z_{k+1}$, and let $\mathcal F_k^x$ additionally
contain those used to compute $x_{k+1}$.  The final lower-gradient batch
completes $\mathcal F_{k+1}$.  Thus
\[
\mathcal F_k\subseteq\mathcal F_k^z\subseteq\mathcal F_k^x
\subseteq\mathcal F_{k+1}.
\]
In particular, $\PP(R=j\mid\mathcal F_K)=1/K$ for $j=0,\ldots,K-1$.
Write $\E_k[\cdot]=\E[\cdot\mid\mathcal F_k]$ and introduce the uniform
error bounds
\begin{equation}
\label{eq:app-beta-v}
\beta:=C_pL_pR_z\tau^p,\qquad
v:=\frac{\sigma^2}{m}\left(1+\frac{A_pR_z^2}{\tau^2}\right),
\qquad v_y:=\frac{\sigma^2}{m}\le v.
\end{equation}

\begin{lemma}[Conditional decomposition of the three updates]
\label{lem:app-sampling}
Put $H_k=H(x_k,y_k)$, $J_k=J(x_k,y_k)$, and $b_k=b(x_k,y_k)$.
The sampled directions in Algorithm~\ref{alg:mrt-fd} can be written as
\begin{align}
\hat r_k
&:=\widehat{\FD}_y(x_k,y_k;z_k)-\bar F_{y,k}
 =H_kz_k-b_k+d_k^z+\xi_k^z, \label{eq:app-r-decomposition}\\
\hat h_k
&=\nabla_x f(x_k,y_k)-J_kz_{k+1}+d_k^x+\xi_k^x,
\label{eq:app-h-decomposition}\\
\bar G_{y,k}
&=\nabla_y g(x_k,y_k)+\xi_k^y. \label{eq:app-y-decomposition}
\end{align}
Here $d_k^z$ is $\mathcal F_k$-measurable, $d_k^x$ is
$\mathcal F_k^z$-measurable, and $\|d_k^z\|,\|d_k^x\|\le\beta$.
The noises satisfy
\begin{align}
\E[\xi_k^z\mid\mathcal F_k]&=0,&
\E[\|\xi_k^z\|^2\mid\mathcal F_k]&\le v,\\
\E[\xi_k^x\mid\mathcal F_k^z]&=0,&
\E[\|\xi_k^x\|^2\mid\mathcal F_k^z]&\le v,\\
\E[\xi_k^y\mid\mathcal F_k^x]&=0,&
\E[\|\xi_k^y\|^2\mid\mathcal F_k^x]&\le v_y.
\label{eq:app-noise-moments}
\end{align}
\end{lemma}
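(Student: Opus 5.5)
The plan is to define each bias and noise term explicitly as the difference between the sampled direction and its conditional mean, and then read off the required properties from Lemma~\ref{lem:app-fd} and Assumption~\ref{ass:oracle}. For the auxiliary direction, set
\[
d_k^z:=\FD_y(x_k,y_k;z_k)-H_kz_k,\qquad
\xi_k^z:=\bigl[\widehat{\FD}_y(x_k,y_k;z_k)-\FD_y(x_k,y_k;z_k)\bigr]-\bigl[\bar F_{y,k}-b_k\bigr].
\]
With these definitions \eqref{eq:app-r-decomposition} holds identically. Since $x_k,y_k,z_k$ are $\mathcal F_k$-measurable, $d_k^z$ is $\mathcal F_k$-measurable. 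By \eqref{eq:app-fd-bias} and the projection bound $\|z_k\|\le R_z$ (which holds trivially for $k=0$ since $z_0=0$, and afterwards because $z_k\in Q$), we get $\|d_k^z\|\le C_pL_p\tau^p R_z=\beta$.

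The mean-zero property of $\xi_k^z$ given $\mathcal F_k$ comes from \eqref{eq:app-fd-mean} and from unbiasedness of the $m$ fresh calls behind $\bar F_{y,k}$. For the second moment, the node batches and the center batch are disjoint sets of calls. Each summand is conditionally centered, and the batches are conditionally independent given $\mathcal F_k$, so the cross term vanishes. This gives
\[
\E[\|\xi_k^z\|^2\mid\mathcal F_k]\le \frac{A_p\sigma^2R_z^2}{m\tau^2}+\frac{\sigma^2}{m}=v,
\]
using \eqref{eq:app-fd-var} with $\|z_k\|\le R_z$ and the per-block variance bound $\sigma^2$ (each summand in \eqref{eq:oracle-variance} is nonnegative) averaged over $m$ calls. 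The cross-term cancellation is exactly the point where we use the sampling convention that every occurrence draws a separate batch. The cross term is $\E[\langle X,Y\rangle\mid\mathcal F_k]$ with $X,Y$ built from disjoint call sets, and conditioning on one set leaves the other centered.

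The $x$ direction is handled the same way with respect to $\mathcal F_k^z$. Set $d_k^x:=J_kz_{k+1}-\FD_x(x_k,y_k;z_{k+1})$; the sign is chosen so that $\hat h_k=\bar F_{x,k}-\widehat{\FD}_x$ matches \eqref{eq:app-h-decomposition}. Set $\xi_k^x:=(\bar F_{x,k}-\nabla_xf(x_k,y_k))-(\widehat{\FD}_x-\FD_x)$. The vector $z_{k+1}$ is $\mathcal F_k^z$-measurable and lies in $Q$, so $d_k^x$ is $\mathcal F_k^z$-measurable with norm at most $\beta$. The batches for the $x$ update are fresh after $\mathcal F_k^z$, so Lemma~\ref{lem:app-fd} applies with $\mathcal F=\mathcal F_k^z$, and the same orthogonality argument gives the bound $v$.

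Finally, set $\xi_k^y:=\bar G_{y,k}-\nabla_yg(x_k,y_k)$. This batch is drawn after $\mathcal F_k^x$ at the point $(x_k,y_k)$, which is measurable with respect to $\mathcal F_k^x$. Hence $\xi_k^y$ has conditional mean zero and conditional second moment at most $\sigma^2/m=v_y$.

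The only delicate step is the bookkeeping of filtrations. We must verify that each query point is measurable with respect to the history preceding its batch. The key case is the outer difference, which uses $z_{k+1}$ and so requires conditioning on $\mathcal F_k^z$ rather than on $\mathcal F_k$. We must also verify that the noise terms are computed from call sets disjoint from those defining the conditioning.
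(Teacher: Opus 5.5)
Your proposal is correct and follows essentially the same route as the paper's proof. You use the same definitions of $d_k^z$ and $d_k^x$ (including the sign flip for the outer direction), the same bias bound from Lemma~\ref{lem:app-fd} with $\|z\|\le R_z$, additivity of conditional variances across disjoint fresh batches, and the same filtration bookkeeping, conditioning on $\mathcal F_k^z$ for the outer finite difference at $z_{k+1}$.
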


\begin{proof}
Every $z_k$ belongs to $Q$, since $z_0=0$ and every auxiliary update is
projected.  Define
\begin{align*}
d_k^z&:=\FD_y(x_k,y_k;z_k)-H_kz_k,\\
d_k^x&:=J_kz_{k+1}-\FD_x(x_k,y_k;z_{k+1}).
\end{align*}
Lemma~\ref{lem:app-fd} gives their measurability and bounds.  Notice the
opposite signs: the outer direction subtracts its finite difference.

For $\xi_k^z$, subtract the finite differences and exact center gradients
from their sampled counterparts in $\hat r_k$.  The center term
$\bar F_{y,k}-b_k$ is conditionally centered with variance at most
$\sigma^2/m$ by the $\hat F_y$ block of
\eqref{eq:oracle-mean}--\eqref{eq:oracle-variance} and fresh batching.
It uses a separate batch from the finite difference, so their conditional
variances add, giving at most
\[
\frac{A_p\sigma^2\|z_k\|^2}{m\tau^2}
+\frac{\sigma^2}{m}\le v.
\]
For the outer direction, condition on $\mathcal F_k^z$.  Its input
$z_{k+1}$ is now fixed and lies in $Q$.  Entirely new center and node
batches give the same argument using the $\hat F_x$ block and $z_{k+1}$
in place of $z_k$.
Finally, $\bar G_{y,k}$ is drawn after $\mathcal F_k^x$ at the already
selected point $(x_k,y_k)$.  The $\hat G_y$ block of
\eqref{eq:oracle-mean}--\eqref{eq:oracle-variance} gives its conditional
mean $\nabla_y g(x_k,y_k)$ and variance bound $v_y$ after averaging.
These arguments also cover repeated query locations because their batches
are distinct.
\end{proof}

All expectations used below are finite.  To see this without assuming
bounded iterates, start with the fixed $x_0,y_0$.  The gradients of $f$ and $g$
grow at most linearly by Assumption~\ref{ass:base}(i), while the mean
finite differences have norm at most $LR_z+\beta$ by
Lemma~\ref{lem:app-fd}.  The noises above have uniformly bounded second
moments.  Since $z_k$ is bounded, induction through the three updates
shows that $x_k,y_k$ have finite second moments at every finite $k$.
Lemma~\ref{lem:app-geometry} then gives finite second moments for the
tracking errors and the outer gradient.  Finally, smoothness of $\Phi$
bounds $|\Phi(x)|$ by a constant times $1+\|x\|^2$, so the objective and
potential are integrable as well.

\subsection{One-step descent and tracking estimates}

\begin{lemma}[Projected auxiliary correction at the current outer point]
\label{lem:app-auxiliary}
With $\eta_z=\eta=1/(2L)$, the auxiliary update satisfies
\begin{equation}
\label{eq:app-auxiliary-bound}
\E_k\|\bar e_k\|^2
\le (1-r)\|e_k\|^2
   +\frac{6\eta}{\mu}\bigl(B^2\|\delta_k\|^2+\beta^2\bigr)
   +\eta^2v.
\end{equation}
The target in $\bar e_k$ is $z^\star(x_k)$, before the outer point moves.
\end{lemma}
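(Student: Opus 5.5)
The plan is to treat the $z$ update as one projected stochastic gradient step on the strongly convex quadratic \eqref{eq:auxiliary-quadratic}, but measured against the fixed point $q:=z^\star(x_k)$ rather than its minimizer $z(x_k,y_k)$. Since $\|q\|\le q_0\le R_z$ by Lemma~\ref{lem:app-geometry}, we have $q\in Q$. Nonexpansiveness of $\Proj_Q$ then gives
\[
\|\bar e_k\|\le\bigl\|e_k-\eta\hat r_k\bigr\|.
\]
Using Lemma~\ref{lem:app-sampling}, write $\hat r_k=H_ke_k+w_k+d_k^z+\xi_k^z$, where $w_k:=H_kq-b_k$. By \eqref{eq:app-residual-mismatch} this mismatch satisfies $\|w_k\|\le B\|\delta_k\|$, and $w_k$ is $\mathcal F_k$-measurable, as are $d_k^z$ and $e_k$. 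Squaring and taking $\E_k$ kills the cross term with $\xi_k^z$, and the noise contributes at most $\eta^2v$. This leaves the deterministic part $\|e_k-\eta H_ke_k-\eta(w_k+d_k^z)\|^2$.

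For the deterministic part, $\mu I\preceq H_k\preceq LI$ and $\eta=1/(2L)$ give $\|(I-\eta H_k)e_k\|\le(1-\eta\mu)\|e_k\|=(1-r)\|e_k\|$. Next apply Young's inequality $\|a+b\|^2\le(1+\theta)\|a\|^2+(1+\theta^{-1})\|b\|^2$ with $\theta=r/(1-r)$ (or simply $\theta=r$), so that
\[
(1+\theta)(1-r)^2\|e_k\|^2\le(1-r)\|e_k\|^2 .
\]
The perturbation then gets coefficient $(1+1/\theta)\eta^2\le \eta^2/r=\eta/\mu$. Finally $\|w_k+d_k^z\|^2\le2B^2\|\delta_k\|^2+2\beta^2$, which yields a constant $2\eta/\mu\le6\eta/\mu$. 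The stated factor $6$ leaves slack, so any cruder splitting, for instance a three-term Young inequality with $\|d_k^z\|\le\beta$ from Lemma~\ref{lem:app-sampling}, also suffices.

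The step needing the most care is the interaction of the projection with the noise. Nonexpansiveness has to be applied before taking conditional expectation, with $q$ being $\mathcal F_k$-measurable, so that the cross term $\langle e_k-\eta(H_ke_k+w_k+d_k^z),\xi_k^z\rangle$ has zero conditional mean. Also, $e_k$ itself is defined relative to $z^\star(x_k)$, not $z(x_k,y_k)$. This is exactly why the mismatch $w_k$ appears instead of comparing against the exact minimizer, and it avoids needing $z(x_k,y_k)$ to be close to $q$ separately.
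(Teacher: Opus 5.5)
Your proof is correct and is essentially the paper's argument: you apply nonexpansiveness of $\Proj_Q$ against $q=z^\star(x_k)\in Q$ before conditioning, center $\xi_k^z$, use $\|I-\eta H_k\|_{\op}\le 1-r$, and apply a single Young inequality, where your choice $\theta=r/(1-r)$ gives the sharper factor $2\eta/\mu$ in place of the paper's $6\eta/\mu$ (which comes from $s=r/2$ and $1+2/r\le 3/r$). One small slip is that for the alternative $\theta=r$ the coefficient is $(1+r)\eta^2/r\le 2\eta/\mu$ rather than $\eta/\mu$, but the resulting $4\eta/\mu$ still fits under the stated constant.
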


\begin{proof}
Set $q=z^\star(x_k)$ and
$w_k=H_kq-b_k+d_k^z$.  These are $\mathcal F_k$-measurable, and
Lemmas~\ref{lem:app-mismatch} and~\ref{lem:app-sampling} give
\begin{equation}
\label{eq:app-w-bound}
\|w_k\|^2\le2B^2\|\delta_k\|^2+2\beta^2.
\end{equation}
Because $q\in Q$ and Euclidean projection is nonexpansive,
\[
\|\bar e_k\|^2
\le\|(I-\eta H_k)e_k-\eta w_k-\eta\xi_k^z\|^2.
\]
Take conditional expectation only after this inequality.  The noise is
centered before projection, so
\begin{equation}
\label{eq:app-preprojection}
\E_k\|\bar e_k\|^2
\le\|(I-\eta H_k)e_k-\eta w_k\|^2+\eta^2v.
\end{equation}
No unbiasedness property of the projected iterate is needed.

The eigenvalues of $H_k$ lie in $[\mu,L]$, and $\eta=1/(2L)$ implies
$\|I-\eta H_k\|_{\op}\le1-r$.
Apply Young's inequality
$\|a+b\|^2\le(1+s)\|a\|^2+(1+s^{-1})\|b\|^2$
with $s=r/2$ to the deterministic term in
\eqref{eq:app-preprojection}.  Since $0<r\le1/2$,
\[
(1+r/2)(1-r)^2\le1-r,\qquad 1+2/r\le3/r.
\]
It follows that
\[
\E_k\|\bar e_k\|^2
\le(1-r)\|e_k\|^2+\frac{3\eta^2}{r}\|w_k\|^2+\eta^2v.
\]
Substitute \eqref{eq:app-w-bound} and use $r=\mu\eta$ to obtain
\eqref{eq:app-auxiliary-bound}.
\end{proof}

\begin{lemma}[Outer-direction error and objective descent]
\label{lem:app-outer}
Let $h_k=\nabla\Phi(x_k)$ and
$d_k=\E[\hat h_k\mid\mathcal F_k^z]-h_k$.
With the steps in \eqref{eq:app-steps},
\begin{align}
\E_k\|d_k\|^2
 &\le C T_k+{C_{\rm a}}\beta^2+\tfrac34v, \label{eq:app-outer-bias}\\
\E_k\|\hat h_k\|^2
 &\le2\|h_k\|^2+2C T_k+2{C_{\rm a}}\beta^2+\tfrac52v,
\label{eq:app-outer-second}\\
\E_k\Phi(x_{k+1})
 &\le\Phi(x_k)-\tfrac\alpha2\|h_k\|^2
       +\tfrac{\alpha C}{2}T_k
       +\tfrac{\alpha {C_{\rm a}}}{2}\beta^2+\tfrac{7\alpha}{8}v.
\label{eq:app-outer-descent}
\end{align}
In particular,
$\E_k\|x_{k+1}-x_k\|^2=\alpha^2\E_k\|\hat h_k\|^2$.
\end{lemma}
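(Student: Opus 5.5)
The plan is to express everything through the $\mathcal F_k^z$-conditional mean of the outer direction. Then I combine Lemma~\ref{lem:app-mismatch} with the auxiliary contraction of Lemma~\ref{lem:app-auxiliary}, and finish with the standard $L_\Phi$-smoothness descent inequality. The three displays are proved in the order stated, each feeding the next. The final identity $\E_k\|x_{k+1}-x_k\|^2=\alpha^2\E_k\|\hat h_k\|^2$ is immediate from $x_{k+1}=x_k-\alpha\hat h_k$.

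\emph{Bias bound \eqref{eq:app-outer-bias}.} By Lemma~\ref{lem:app-sampling}, conditioning on $\mathcal F_k^z$ kills $\xi_k^x$. Hence
\[
d_k=\nabla_x f(x_k,y_k)-J_kz_{k+1}-h_k+d_k^x,
\]
which is $\mathcal F_k^z$-measurable. Applying \eqref{eq:app-gradient-mismatch} with $z=z_{k+1}$ and using $\|d_k^x\|\le\beta$ gives $\|d_k\|\le L\|\bar e_k\|+B\|\delta_k\|+\beta$, so
\[
\|d_k\|^2\le 3L^2\|\bar e_k\|^2+3B^2\|\delta_k\|^2+3\beta^2.
\]
Taking $\E_k$ and inserting \eqref{eq:app-auxiliary-bound} with $\eta=1/(2L)$ produces three groups of terms. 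The $\|e_k\|^2$ coefficient is $3L^2(1-r)\le 3L^2\le C$. The $\|\delta_k\|^2$ coefficient is $3B^2+18L^2\eta B^2/\mu=3B^2(1+3L/\mu)=B^2C_{\rm a}$, which after writing $\|\delta_k\|^2=\omega\|\delta_k\|^2/\omega$ is at most $C\omega$. The $\beta^2$ coefficient is likewise $C_{\rm a}$, and the noise term is $3L^2\eta^2v=\tfrac34 v$. This is exactly \eqref{eq:app-outer-bias} with $T_k=\|e_k\|^2+\omega\|\delta_k\|^2$.

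\emph{Second moment \eqref{eq:app-outer-second}.} Write $\hat h_k=h_k+d_k+\xi_k^x$. Since $h_k+d_k$ is $\mathcal F_k^z$-measurable and $\xi_k^x$ is centered given $\mathcal F_k^z$, the cross term vanishes. Thus
\[
\E[\|\hat h_k\|^2\mid\mathcal F_k^z]=\|h_k+d_k\|^2+\E[\|\xi_k^x\|^2\mid\mathcal F_k^z]\le 2\|h_k\|^2+2\|d_k\|^2+v.
\]
Applying the tower property and the bias bound gives $2\|h_k\|^2+2CT_k+2C_{\rm a}\beta^2+\tfrac32v+v$, as claimed.

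\emph{Descent \eqref{eq:app-outer-descent}.} This is the step needing care, since the crude Young split $\langle h,d\rangle\ge-\tfrac12\|h\|^2-\tfrac12\|d\|^2$ cancels the whole $-\alpha\|h_k\|^2$ gain. I would instead start from the Lipschitz property of $\nabla\Phi$ (Lemma~\ref{lem:app-geometry}) and $\alpha L_\Phi\le 1/2$:
\[
\Phi(x_{k+1})\le\Phi(x_k)-\alpha\langle h_k,\hat h_k\rangle+\tfrac{\alpha}{4}\|\hat h_k\|^2.
\]
Condition on $\mathcal F_k^z$ and use the decomposition above to get
\[
\E[\Phi(x_{k+1})\mid\mathcal F_k^z]\le\Phi(x_k)-\alpha\langle h_k,h_k+d_k\rangle+\tfrac\alpha4\|h_k+d_k\|^2+\tfrac\alpha4 v.
\]
Next apply the polarization identity
\[
-\langle h,h+d\rangle=-\tfrac12\|h\|^2-\tfrac12\|h+d\|^2+\tfrac12\|d\|^2.
\]
The resulting $-\tfrac\alpha4\|h_k+d_k\|^2$ can be dropped, leaving $-\tfrac\alpha2\|h_k\|^2+\tfrac\alpha2\|d_k\|^2+\tfrac\alpha4v$. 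Finally, take $\E_k$ and substitute \eqref{eq:app-outer-bias}. This yields $\tfrac{\alpha C}{2}T_k+\tfrac{\alpha C_{\rm a}}{2}\beta^2$ and a total noise coefficient $\tfrac{3\alpha}{8}+\tfrac{\alpha}{4}=\tfrac{5\alpha}{8}\le\tfrac{7\alpha}{8}$, as required. Integrability of all quantities is guaranteed by the finiteness remark preceding this subsection.
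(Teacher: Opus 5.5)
Your proposal is correct and follows the paper's proof essentially step for step. It uses the same bound $\|d_k\|\le L\|\bar e_k\|+B\|\delta_k\|+\beta$ combined with Lemma~\ref{lem:app-auxiliary}, the same conditional second-moment split, and the same polarization identity in the descent step; the only difference is that you invoke $\alpha L_\Phi\le 1/2$ (the paper uses only $\alpha L_\Phi\le 1$), which gives the slightly sharper noise coefficient $5\alpha/8\le 7\alpha/8$.
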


\begin{proof}
By Lemmas~\ref{lem:app-mismatch} and~\ref{lem:app-sampling},
\[
\|d_k\|\le L\|\bar e_k\|+B\|\delta_k\|+\beta.
\]
Squaring, taking $\E_k$, and applying
Lemma~\ref{lem:app-auxiliary} yields
\begin{align*}
\E_k\|d_k\|^2
&\le3L^2\E_k\|\bar e_k\|^2+3B^2\|\delta_k\|^2+3\beta^2\\
&\le3L^2\|e_k\|^2
 +3B^2\left(1+\frac{6L^2\eta}{\mu}\right)\|\delta_k\|^2
 +3\left(1+\frac{6L^2\eta}{\mu}\right)\beta^2
 +3L^2\eta^2v\\
&=3L^2\|e_k\|^2+B^2{C_{\rm a}}\|\delta_k\|^2+{C_{\rm a}}\beta^2+\tfrac34v.
\end{align*}
The last equality uses $\eta=1/(2L)$ and the definition of ${C_{\rm a}}$.
By definition of $C$ and $T_k$,
$3L^2\|e_k\|^2+B^2{C_{\rm a}}\|\delta_k\|^2\le C T_k$.
This proves \eqref{eq:app-outer-bias}.

Conditional on $\mathcal F_k^z$, the mean of $\hat h_k$ is $h_k+d_k$
and its centered second moment is at most $v$.  Therefore
\[
\E[\|\hat h_k\|^2\mid\mathcal F_k^z]
\le\|h_k+d_k\|^2+v
\le2\|h_k\|^2+2\|d_k\|^2+v.
\]
Taking $\E_k$ and using \eqref{eq:app-outer-bias} proves
\eqref{eq:app-outer-second}.

By $L_\Phi$-smoothness and the outer update,
\begin{align*}
\E[\Phi(x_{k+1})\mid\mathcal F_k^z]
\le\Phi(x_k)-\alpha\langle h_k,h_k+d_k\rangle
 +\frac{L_\Phi\alpha^2}{2}\bigl(\|h_k+d_k\|^2+v\bigr).
\end{align*}
The exact identity
\[
-\langle h,h+d\rangle
=-\tfrac12\|h\|^2-\tfrac12\|h+d\|^2+\tfrac12\|d\|^2
\]
and $\alpha L_\Phi\le1$ give
\[
\E[\Phi(x_{k+1})\mid\mathcal F_k^z]
\le\Phi(x_k)-\frac\alpha2\|h_k\|^2
 +\frac\alpha2\|d_k\|^2+\frac{L_\Phi\alpha^2}{2}v.
\]
Take $\E_k$, insert \eqref{eq:app-outer-bias}, and bound
$L_\Phi\alpha^2/2\le\alpha/2$.
The variance coefficient becomes $3\alpha/8+\alpha/2=7\alpha/8$,
which proves \eqref{eq:app-outer-descent}.
The movement identity follows directly from
$x_{k+1}-x_k=-\alpha\hat h_k$.
\end{proof}

\begin{lemma}[Tracking after the outer point moves]
\label{lem:app-tracking}
With the steps in \eqref{eq:app-steps}, the two tracking errors satisfy
\begin{align}
\E_k\|e_{k+1}\|^2
&\le(1-r/2)\|e_k\|^2
 +\frac{9\eta B^2}{\mu}\|\delta_k\|^2
 +\frac{9\eta}{\mu}\beta^2+\frac32\eta^2v
 +\frac{3L_z^2}{r}\E_k\|x_{k+1}-x_k\|^2,
\label{eq:app-z-tracking}\\
\E_k\|\delta_{k+1}\|^2
&\le(1-r/2)\|\delta_k\|^2+\frac32\eta^2v_y
 +\frac{3L_y^2}{r}\E_k\|x_{k+1}-x_k\|^2.
\label{eq:app-y-tracking}
\end{align}
Consequently, with $T_k$ from \eqref{eq:app-potential},
\begin{equation}
\label{eq:app-joint-tracking}
\E_k T_{k+1}
\le(1-\rho)T_k+N_{\rm b}\beta^2+N_{\rm v}v
   +D\E_k\|x_{k+1}-x_k\|^2.
\end{equation}
\end{lemma}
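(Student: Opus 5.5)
For the $z$-error, I decompose $e_{k+1}=\bar e_k+\bigl(z^\star(x_k)-z^\star(x_{k+1})\bigr)$ and apply Young's inequality with parameter $s=r/2$:
\[
\|e_{k+1}\|^2\le(1+r/2)\|\bar e_k\|^2+(1+2/r)\|z^\star(x_k)-z^\star(x_{k+1})\|^2 .
\]
The second term is at most $(3/r)L_z^2\|x_{k+1}-x_k\|^2$ by \eqref{eq:app-z-lipschitz} and $r\le1/2$. I then take $\E_k$ and insert Lemma~\ref{lem:app-auxiliary}, noting that it bounds $\E_k\|\bar e_k\|^2$ given $\mathcal F_k$. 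For the leading term I use $(1+r/2)(1-r)\le1-r/2$. For the remaining terms, $1+r/2\le3/2$ turns $6\eta/\mu$ into $9\eta/\mu$ and $\eta^2v$ into $\tfrac32\eta^2v$. This gives \eqref{eq:app-z-tracking}.

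For $\delta_{k+1}$ I first analyze the pre-move error $\bar\delta_k:=y_{k+1}-y^\star(x_k)=\delta_k-\eta\nabla_y g(x_k,y_k)-\eta\xi_k^y$. Since $\nabla_y g(x_k,y^\star(x_k))=0$, the deterministic part equals $\delta_k-\eta(\nabla_yg(x_k,y_k)-\nabla_yg(x_k,y^\star(x_k)))$. Strong convexity and $L$-smoothness with $\eta=1/(2L)$ make the map $y\mapsto y-\eta\nabla_yg$ a $(1-\mu\eta)=(1-r)$-contraction; equivalently, write the difference as $\bar H\delta_k$ with an averaged Hessian $\bar H$ whose spectrum lies in $[\mu,L]$. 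The noise $\xi_k^y$ is centered given $\mathcal F_k^x$, so I condition on $\mathcal F_k^x$, which contains $x_{k+1}$. However, $\bar\delta_k$ alone does not involve $x_{k+1}$, so I can simply use the tower property to get $\E_k\|\bar\delta_k\|^2\le(1-r)^2\|\delta_k\|^2+\eta^2v_y$.

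I then write $\delta_{k+1}=\bar\delta_k+\bigl(y^\star(x_k)-y^\star(x_{k+1})\bigr)$. The same Young step with \eqref{eq:app-y-lipschitz} and $(1+r/2)(1-r)^2\le1-r/2$ yields \eqref{eq:app-y-tracking}. The noise coefficient $(1+r/2)\eta^2v_y\le\tfrac32\eta^2v_y$ stays valid even though $\xi_k^y$ and $x_{k+1}$ are correlated, because Young's inequality is applied pathwise before taking expectations.

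For \eqref{eq:app-joint-tracking}, I add \eqref{eq:app-z-tracking} to $\omega$ times \eqref{eq:app-y-tracking}. The coefficient of $\|\delta_k\|^2$ is
\[
\omega(1-r/2)+\frac{9\eta B^2}{\mu}.
\]
I need this to be at most $\omega(1-r/4)$, that is, $9\eta B^2/\mu\le\omega r/4=\omega\mu\eta/4$, or $\omega\ge36B^2/\mu^2$. This holds by the definition of $\omega$, and it is the step that dictated $\omega$, so it is the one to check carefully. The $\|e_k\|^2$ coefficient satisfies $1-r/2\le1-\rho$. The bias term gives $N_{\rm b}=9\eta/\mu$. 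The variance terms give $\tfrac32\eta^2(v+\omega v_y)\le N_{\rm v}v$ using $v_y\le v$. The movement terms combine to $(3/r)(L_z^2+\omega L_y^2)=D$. The only mild obstacle is tracking the conditioning correctly, namely that $\bar e_k$'s bound holds given $\mathcal F_k$ while the movement enters only through $\E_k\|x_{k+1}-x_k\|^2$; the pathwise Young inequality avoids any cross-term issue.
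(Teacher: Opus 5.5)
Your proof is correct and follows the paper's argument. It uses the same decomposition $e_{k+1}=\bar e_k-[z^\star(x_{k+1})-z^\star(x_k)]$ with Young's inequality at $s=r/2$, the same averaged-Hessian $(1-r)$-contraction for the lower step, and the same absorption condition $\omega r/4\ge 9\eta B^2/\mu$ in the joint bound. The one difference is in the $y$-step: the paper first conditions on $\mathcal F_k^x$ to strip the noise before applying Young to $\widetilde\delta_k-\Delta_k^y$, which relies on the $y$-batch being drawn after $x_{k+1}$. You instead apply Young pathwise to the noisy pre-move error, as in the $z$-step, so you need only $\mathcal F_k$-centering of $\xi_k^y$. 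This costs an extra factor $1+r/2$ on $\eta^2v_y$, which the paper's relaxation to $\tfrac32\eta^2v_y$ absorbs anyway.
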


\begin{proof}
For the auxiliary target, the exact decomposition is
\[
e_{k+1}=\bar e_k-[z^\star(x_{k+1})-z^\star(x_k)].
\]
Apply Young's inequality with $s=r/2$ and use the $L_z$-Lipschitz bound:
\[
\E_k\|e_{k+1}\|^2
\le(1+r/2)\E_k\|\bar e_k\|^2
   +\frac{3L_z^2}{r}\E_k\|x_{k+1}-x_k\|^2.
\]
Insert \eqref{eq:app-auxiliary-bound} and use
$(1+r/2)(1-r)\le1-r/2$ and $1+r/2\le3/2$.
This gives \eqref{eq:app-z-tracking}; no independence between
$\bar e_k$ and the target movement was used.

For the lower variable, write $y_k^\star=y^\star(x_k)$ and define
\[
\widetilde\delta_k
:=\delta_k-\eta\nabla_y g(x_k,y_k).
\]
By the fundamental theorem of calculus and
$\nabla_y g(x_k,y_k^\star)=0$,
\[
\nabla_y g(x_k,y_k)
=\bar H_k\delta_k,\qquad
\bar H_k:=\int_0^1H(x_k,y_k^\star+t\delta_k)\,dt.
\]
The symmetric matrix $\bar H_k$ has eigenvalues in $[\mu,L]$.
Thus $\|\widetilde\delta_k\|\le(1-r)\|\delta_k\|$.
Put $\Delta_k^y=y^\star(x_{k+1})-y^\star(x_k)$.
The lower update gives
$\delta_{k+1}=\widetilde\delta_k-\Delta_k^y-\eta\xi_k^y$.
Both $\widetilde\delta_k$ and $\Delta_k^y$ are
$\mathcal F_k^x$-measurable.  Centering the final fresh noise at this
history therefore yields
\[
\E_k\|\delta_{k+1}\|^2
\le\E_k\|\widetilde\delta_k-\Delta_k^y\|^2+\eta^2v_y.
\]
Young's inequality with $s=r/2$ and the $L_y$-Lipschitz bound imply
\[
\E_k\|\delta_{k+1}\|^2
\le(1+r/2)(1-r)^2\|\delta_k\|^2
  +\frac{3L_y^2}{r}\E_k\|x_{k+1}-x_k\|^2+\eta^2v_y.
\]
Relaxing $(1+r/2)(1-r)^2\le1-r\le1-r/2$ and
$\eta^2v_y\le(3/2)\eta^2v_y$ proves
\eqref{eq:app-y-tracking}.

Finally, multiply \eqref{eq:app-y-tracking} by $\omega$ and add
\eqref{eq:app-z-tracking}.  The only coupling term is absorbed because
\[
\frac{\omega r}{4}
=\frac r4+\frac{9\eta B^2}{\mu}
\ge\frac{9\eta B^2}{\mu}.
\]
The coefficient of $\|\delta_k\|^2$ is therefore at most
$\omega(1-r/4)$, and that of $\|e_k\|^2$ is at most $1-r/4$.
Use $v_y\le v$ and the definitions of $\rho,N_{\rm b},N_{\rm v},D$
to obtain \eqref{eq:app-joint-tracking}.
\end{proof}

\subsection{Lyapunov descent}

\begin{proposition}[A uniform one-step inequality]
\label{prop:app-lyapunov}
Under Assumptions~\ref{ass:base}--\ref{ass:oracle} and the step choices
\eqref{eq:app-steps},
for every $\tau>0$ and positive integer $m$,
\begin{equation}
\label{eq:app-lyapunov-descent}
\E_k\mathcal V_{k+1}
\le\mathcal V_k-\frac\alpha4\|\nabla\Phi(x_k)\|^2
    -\alpha C T_k+\alpha S_{\rm b}\beta^2+\alpha S_{\rm v}v.
\end{equation}
All constants are given in Appendix~\ref{app:parameters}; in particular,
the step sizes do not depend on the target accuracy.
\end{proposition}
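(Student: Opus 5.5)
Since $\mathcal V_{k+1}=\Phi(x_{k+1})-\Phi_{\inf}+\lambda T_{k+1}$, I will add the descent inequality \eqref{eq:app-outer-descent} of Lemma~\ref{lem:app-outer} to $\lambda$ times the joint tracking recursion \eqref{eq:app-joint-tracking} of Lemma~\ref{lem:app-tracking}. The term $D\E_k\|x_{k+1}-x_k\|^2$ is converted by the movement identity in Lemma~\ref{lem:app-outer} into $\alpha^2 D\E_k\|\hat h_k\|^2$, and then bounded by \eqref{eq:app-outer-second}. This gives
\[
\E_k\mathcal V_{k+1}\le \mathcal V_k-\tfrac\alpha2\|h_k\|^2+2\lambda\alpha^2D\|h_k\|^2
+\Bigl(\tfrac{\alpha C}{2}-\lambda\rho+2\lambda\alpha^2DC\Bigr)T_k
+(\cdots)\beta^2+(\cdots)v .
\]

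Next I substitute $\lambda=2\alpha C/\rho$. The contraction term $-\lambda\rho T_k$ then equals $-2\alpha CT_k$. The coupling coefficient satisfies $2\lambda\alpha^2D=4\alpha^3CD/\rho\le\alpha/4$, because $\alpha^2\le\rho/(16CD)$ by \eqref{eq:app-steps}. Hence the gradient coefficient is at most $-\alpha/2+\alpha/4=-\alpha/4$. For the $T_k$ coefficient, the same step-size bound gives $2\lambda\alpha^2DC\le \alpha C/4$. So the total is at most $\alpha C/2+\alpha C/4-2\alpha C\le-\alpha C$, which matches the claimed $-\alpha CT_k$ with some slack.

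It remains to collect the error terms. The $\beta^2$ coefficient is $\alpha C_{\rm a}/2+\lambda N_{\rm b}+2\lambda\alpha^2DC_{\rm a}$. Using $\lambda N_{\rm b}=\alpha\cdot 2CN_{\rm b}/\rho$ and $2\lambda\alpha^2D\le\alpha/4$, this is at most $\alpha(C_{\rm a}+2CN_{\rm b}/\rho)=\alpha S_{\rm b}$. The $v$ coefficient is $7\alpha/8+\lambda N_{\rm v}+\lambda\alpha^2D\cdot\tfrac52$. Here $\lambda\alpha^2D\cdot\tfrac52\le\tfrac58\cdot\tfrac{\alpha}{2}$ is absorbed, and $\lambda N_{\rm v}=\alpha\,2CN_{\rm v}/\rho$. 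Together these stay below $\alpha(2+2CN_{\rm v}/\rho)=\alpha S_{\rm v}$.

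The main obstacle is the bookkeeping of conditioning. Lemma~\ref{lem:app-outer} and Lemma~\ref{lem:app-tracking} are both stated under $\E_k$, so the two inequalities can be added directly, and $h_k$, $T_k$ are $\mathcal F_k$-measurable. Integrability was established before the lemma. Nothing else is needed beyond checking that the constant absorptions above are valid for every $\tau>0$ and every $m$; they are, since $\beta$ and $v$ enter only linearly with $\tau$- and $m$-free coefficients.
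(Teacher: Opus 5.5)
Your proposal is correct and follows essentially the same route as the paper's proof: add \eqref{eq:app-outer-descent} to $\lambda$ times \eqref{eq:app-joint-tracking}, bound the movement term through \eqref{eq:app-outer-second}, and absorb the coupling using $\lambda=2\alpha C/\rho$ and $CD\alpha^2/\rho\le1/16$. The paper names that last quantity $\chi$, and its coefficients $-\tfrac12+4\chi$, $-\tfrac32+4\chi$, $\tfrac12+4\chi$, $\tfrac78+5\chi$ are exactly your bounds $-\tfrac14$, $-\tfrac54$, $\tfrac34$, $\tfrac{19}{16}$.
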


\begin{proof}
Add \eqref{eq:app-outer-descent} to $\lambda$ times
\eqref{eq:app-joint-tracking}, and subtract
$\Phi(x_k)+\lambda T_k$.  This gives
\begin{align*}
\E_k\mathcal V_{k+1}-\mathcal V_k
\le{}&-\frac\alpha2\|h_k\|^2
 +\left(\frac{\alpha C}{2}-\lambda\rho\right)T_k
 +\lambda D\E_k\|x_{k+1}-x_k\|^2\\
&+\left(\frac{\alpha {C_{\rm a}}}{2}+\lambda N_{\rm b}\right)\beta^2
 +\left(\frac{7\alpha}{8}+\lambda N_{\rm v}\right)v.
\end{align*}
Use the movement identity and \eqref{eq:app-outer-second} to bound the
third term by
\[
\lambda D\alpha^2
\left(2\|h_k\|^2+2C T_k+2{C_{\rm a}}\beta^2+\tfrac52v\right).
\]
For an explicit check of every coefficient, put
$\chi=CD\alpha^2/\rho$.
Our choice of $\alpha$ implies $0<\chi\le1/16$, and
$\lambda=2\alpha C/\rho$.  Substitution yields
\begin{align*}
\E_k\mathcal V_{k+1}-\mathcal V_k
\le{}&\alpha\left(-\tfrac12+4\chi\right)\|h_k\|^2
 +\alpha C\left(-\tfrac32+4\chi\right)T_k\\
&+\alpha\left[{C_{\rm a}}\left(\tfrac12+4\chi\right)
                  +\frac{2C N_{\rm b}}{\rho}\right]\beta^2\\
&+\alpha\left[\tfrac78+5\chi
                  +\frac{2C N_{\rm v}}{\rho}\right]v.
\end{align*}
The four numerical coefficients obey
\[
-\tfrac12+4\chi\le-\tfrac14,\quad
-\tfrac32+4\chi\le-\tfrac54\le-1,\quad
\tfrac12+4\chi\le\tfrac34\le1,\quad
\tfrac78+5\chi\le\tfrac{19}{16}\le2.
\]
The definitions of $S_{\rm b}$ and $S_{\rm v}$ now give
\eqref{eq:app-lyapunov-descent}.
\end{proof}

\subsection{Finite-time bound and query complexity}

\begin{proof}[Proof of Theorem~\ref{thm:main}]
Take total expectations in \eqref{eq:app-lyapunov-descent}, discard the
nonpositive $-\alpha C T_k$ term, and sum from $k=0$ to $K-1$.
Since $\mathcal V_K\ge0$, this gives
\[
\frac\alpha4\sum_{k=0}^{K-1}\E\|\nabla\Phi(x_k)\|^2
\le\mathcal V_0+K\alpha S_{\rm b}\beta^2+K\alpha S_{\rm v}v.
\]
After division by $\alpha K/4$ and substitution of
\eqref{eq:app-beta-v},
\begin{align*}
\frac1K\sum_{k=0}^{K-1}\E\|\nabla\Phi(x_k)\|^2
&\le\frac{4\mathcal V_0}{\alpha K}
 +4S_{\rm b}(C_pL_pR_z)^2\tau^{2p}
 +\frac{4S_{\rm v}\sigma^2}{m}
     \left(1+\frac{A_pR_z^2}{\tau^2}\right)\\
&\le\frac{4\mathcal V_0}{\alpha K}
 +C_{\rm b}\tau^{2p}+\frac{C_{\rm v}}{m\tau^2}\\
&\le C_\star\left(\frac{\Delta_0}{K}
 +\tau^{2p}+\frac{1}{m\tau^2}\right).
\end{align*}
The second inequality uses $\tau\le1$, and the last follows from
\eqref{eq:app-initial-bound} and the definition of $C_\star$.
The output rule gives
\[
\E\!\left[\|\nabla\Phi(x_R)\|^2\mid\mathcal F_K\right]
=\frac1K\sum_{k=0}^{K-1}\|\nabla\Phi(x_k)\|^2.
\]
Taking total expectations proves the finite-time output bound
\eqref{eq:main-finite-time} for every $0<\tau\le1$ and all positive
integers $m,K$.

Now use the parameters in \eqref{eq:main-parameters}.  They satisfy
$0<\tau\le1$ and
\begin{align*}
\frac{C_\star\Delta_0}{K}
 &\le\frac{\Delta_0}{3(1+\Delta_0)}\eps^2\le\frac{\eps^2}{3},\\
C_\star\tau^{2p}
 &=\frac{\eps^2}{3},\\
\frac{C_\star}{m\tau^2}
 &\le\frac{\eps^2}{3}.
\end{align*}
The conditions $C_\star\ge1$ and $1+\Delta_0>0$ also cover zero variance,
zero finite-difference bias, or zero initial gap.
Adding the three bounds yields
$\E\|\nabla\Phi(x_R)\|^2\le\eps^2$.

It remains to count primitive queries.  For fixed $p$, problem parameters,
and $\Delta_0$, the definitions give
\[
\tau^{-2}=(3C_\star)^{1/p}\eps^{-2/p},\qquad
m=\cO_p(\eps^{-2-2/p}),\qquad K=\cO_p(\eps^{-2}).
\]
Ceilings preserve these bounds for $0<\eps\le1$.
Each of the two finite differences uses at most $(p+1)m$ calls, and the
three center batches use $3m$ more.  No samples are shared between these
groups.  The total number is therefore at most
$(2p+5)mK=\cO_p(\eps^{-4-2/p})$.
At $p=1$ and $p=2$ the exponents are respectively $6$ and $5$.
\end{proof}

The constants are not optimized in the condition number or uniformly
over $p$.  Section~\ref{sec:lower-bound} combines this guarantee with a
matching lower bound for the same function class and oracle.

\section{Proof of Theorem~\ref{thm:lower}}
\label{app:lower-proof}

We construct $(A,u)$ in Sections~\ref{lb:sec:frontier}--\ref{lb:sec:scaling}
and prove the comparison-oracle bound in
Sections~\ref{lb:sec:random-rotation}--\ref{lb:sec:comparison}.
Lemma~\ref{lem:lb-gradient-variance} then gives the ordinary first-order
benchmark.  Section~\ref{lb:sec:lift} verifies the bilevel construction,
and Section~\ref{lb:sec:main-proof} combines its oracle simulation and
gradient estimate with the same comparison bound.

For a scalar function $h$, $\|h\|_\infty:=\sup_w|h(w)|$; for a vector
$v$, $\|v\|_\infty:=\max_j|v_j|$.  Unsubscripted vector norms are Euclidean.
The seed $\xi_0$ includes all private randomness of the algorithm and is
independent of the instance and oracle randomness.  The transcript
$\mathcal T_N$ contains the first $N$ queries and their responses.
All conditional laws are taken on standard Borel spaces.

\subsection{The chain and its decomposition}
\label{lb:sec:frontier}

\subsubsection{The Carmon--Arjevani chain}

For $v\in\R^T$ and $\alpha\ge0$, define
\[
  \prog_\alpha(v)
  :=\max\bigl(\{0\}\cup\{j\in[T]:|v_j|>\alpha\}\bigr).
\]
Let $v_0:=1$, and define
\[
  \Psi(t):=
  \begin{cases}
    0,&t\le\frac12,\\[1mm]
    \displaystyle
    \exp\!\left(1-\frac{1}{(2t-1)^2}\right),&t>\frac12,
  \end{cases}
\]
and
\[
  \varphi(t):=\sqrt e\int_{-\infty}^t e^{-s^2/2}\,ds.
\]
For $i\in[T]$, put
\begin{equation}\label{lb:eq:Hi}
  \psi_i(v)
  :=\Psi(-v_{i-1})\varphi(-v_i)
    -\Psi(v_{i-1})\varphi(v_i),
\end{equation}
and
\begin{equation}\label{lb:eq:FT}
  C_T(v):=\sum_{i=1}^T \psi_i(v).
\end{equation}
The convention $v_0=1$ makes
$\psi_1(v)=-\Psi(1)\varphi(v_1)$.

We use the following standard properties of this chain.  The constants below
are numerical and their precise values are immaterial for our scaling.

\begin{lemma}[Hard-chain properties; \cite{carmon2020stationary,arjevani2023lowerbounds}]
\label{lb:lem:hard-chain}
There exist numerical constants
$\Delta_{\rm c},L_0,G_\infty>0$ such that, for every $T$:
\begin{enumerate}[label=(\roman*)]
  \item $C_T(0)-\inf_v C_T(v)\le\Delta_{\rm c}T$;
  \item $\nabla C_T$ is $L_0$-Lipschitz;
  \item $\norm{\nabla C_T(v)}_\infty\le G_\infty$ for all $v$;
  \item
  $\prog_0(\nabla C_T(v))\le\prog_{1/2}(v)+1$;
  \item if $k:=\prog_1(v)<T$, then
  \begin{equation}\label{lb:eq:hard-chain-coordinate}
    \left|\partial_{k+1}C_T(v)\right|\ge1,
  \end{equation}
  and hence $\norm{\nabla C_T(v)}\ge1$.
\end{enumerate}
One may take $\Delta_{\rm c}=12$, $L_0=152$, and $G_\infty=23$.
\end{lemma}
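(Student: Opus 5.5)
The plan is to verify each item directly from the elementary properties of the two scalar building blocks, following \citet{carmon2020stationary}. I first record the scalar facts: $\Psi\ge0$, $\Psi'\ge0$, and $\Psi$ together with its derivatives vanishes on $(-\infty,1/2]$; $\Psi\le e$, $\Psi'\le\sqrt{54/e}$, and $\Psi(t)>1$ for $t>1$. For $\varphi$ we have $0<\varphi<\sqrt{2\pi e}$, $0<\varphi'(s)=\sqrt e\,e^{-s^2/2}\le\sqrt e$, and $\varphi'(s)\ge1$ for $|s|\le1$. I also need the second-derivative bounds on $\Psi''$ and $\varphi''$ from that reference. Since $\psi_i$ depends only on $(v_{i-1},v_i)$, each partial derivative involves only two summands:
\[
\partial_jC_T(v)=-\Psi(-v_{j-1})\varphi'(-v_j)-\Psi(v_{j-1})\varphi'(v_j)
-\Psi'(-v_j)\varphi(-v_{j+1})-\Psi'(v_j)\varphi(v_{j+1}),
\]
with the last two terms absent for $j=T$. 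Every term in this display is nonpositive.

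\textbf{Items (i), (iii), (iv).} For (i), at $v=0$ we have $\Psi(0)=0$, so $\psi_i(0)=0$ for $i\ge2$, and $\psi_1(0)=-\Psi(1)\varphi(0)\le0$. Hence $C_T(0)\le0$. For the lower bound, at most one of $\Psi(\pm v_{i-1})$ is nonzero, so each $\psi_i\ge-e\sqrt{2\pi e}>-12$, which gives $C_T(0)-\inf C_T\le12T$. For (iii), again only one of $\Psi(\pm v_{j-1})$ and only one of $\Psi'(\pm v_j)$ is nonzero. This bounds $|\partial_jC_T|\le e\sqrt e+\sqrt{54/e}\sqrt{2\pi e}<23$. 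For (iv), let $k=\prog_{1/2}(v)$ and take $j\ge k+2$. Then $|v_{j-1}|,|v_j|\le1/2$, so $\Psi(\pm v_{j-1})=\Psi'(\pm v_j)=0$ and $\partial_jC_T(v)=0$. The convention $v_0=1$ needs no special treatment, because $j-1\ge1$ in this range.

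\textbf{Item (v).} Let $k=\prog_1(v)<T$. Then either $k=0$, where $v_0=1$ and $\Psi(1)=1$, or $|v_k|>1$; in both cases $|v_{k+1}|\le1$. All four terms of $\partial_{k+1}C_T$ have the same sign, so
\[
|\partial_{k+1}C_T(v)|\ge\Psi(|v_k|)\,\varphi'(\pm v_{k+1})\ge1,
\]
using $\Psi(t)\ge1$ for $t\ge1$ and $\varphi'(s)\ge1$ for $|s|\le1$. The bound $\|\nabla C_T(v)\|\ge1$ follows immediately.

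\textbf{Item (ii), the main obstacle.} This is the only genuinely quantitative step. The Hessian of $C_T$ is tridiagonal, with entries that are sums of products of $\Psi,\Psi',\Psi''$ with $\varphi,\varphi',\varphi''$. I would bound each diagonal and off-diagonal entry using the scalar sup-norm bounds above, again using that only one sign branch of $\Psi(\pm\cdot)$ is active. Gershgorin's theorem (or the bound $\|M\|_{\op}\le\max$ row sum for symmetric tridiagonal $M$) then gives a $T$-independent constant. Rather than re-optimize the arithmetic, I would cite \citet{carmon2020stationary} (their Lemma~3) for the value $L_0=152$, and check only that our $\psi_i$ matches their construction verbatim.
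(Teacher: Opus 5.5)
Your proposal is correct and follows the same route the paper relies on. The paper gives no proof of its own; it simply cites \citet{carmon2020stationary} and \citet{arjevani2023lowerbounds}, and your direct checks of (i) and (iii)--(v) are exactly the arguments of those references: all four terms in $\partial_jC_T$ have the same sign, at most one of $\Psi(\pm t)$ (resp.\ $\Psi'(\pm t)$) is nonzero, and you use the scalar bounds on $\Psi,\Psi',\varphi,\varphi'$. For (ii), your Gershgorin plan in fact closes without the citation: with $\sup|\Psi''|\approx32.4$, $|\varphi''|\le1$ and $\Psi'\varphi'\le\sqrt{54}$, every absolute row sum of the tridiagonal Hessian is at most $e+32.4\sqrt{2\pi e}+2\sqrt{54}\approx151.3\le152$.
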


\subsubsection{A smooth decomposition for the gradient oracle}

The decomposition $C_T=B_T-W_T$ separates the part of the gradient
that may expose the next coordinate.  We define its deterministic
weights below; randomness enters through \eqref{lb:eq:statistical-sample}.

Fix once and for all a nondecreasing function
$\Gamma\in C^\infty(\R;[0,1])$ such that
\begin{equation}\label{lb:eq:Gamma-flat}
  \Gamma(t)=0\quad(t\le1/4),
  \qquad
  \Gamma(t)=1\quad(t\ge1/2).
\end{equation}
Set
\[
  \gamma(t):=\Gamma(|t|).
\]
This is smooth because it vanishes on a neighborhood of the origin.  For
$i\in[T]$, define
\begin{equation}\label{lb:eq:ri-theta}
  r_i(v):=
  \left(\sum_{j=i}^T\gamma(v_j)^2\right)^{1/2},
  \qquad
  \Theta_i(v):=\Gamma(1-r_i(v)).
\end{equation}
The deterministic weight $\Theta_i$ equals one when the entire tail $v_{\ge i}$ is below the
$1/4$ progress threshold, and it equals zero once that tail contains a
coordinate of magnitude at least $1/2$.

Define the quiet base and frontier correction by
\begin{equation}\label{lb:eq:BT}
  B_T(v):=\sum_{i=1}^T \psi_i(v)(1-\Theta_i(v)),
\end{equation}
\begin{equation}\label{lb:eq:WT}
  W_T(v):=-\sum_{i=1}^T \psi_i(v)\Theta_i(v).
\end{equation}
Then, identically,
\begin{equation}\label{lb:eq:split}
  C_T=B_T-W_T.
\end{equation}
Equivalently, for any $\varpi\in(0,1]$, if
$\zeta\sim\mathrm{Bernoulli}(\varpi)$, the random scalar function
\begin{equation}\label{lb:eq:statistical-sample}
  h_T(v;\zeta):=B_T(v)-\frac{\zeta}{\varpi}W_T(v)
\end{equation}
satisfies $\E h_T(v;\zeta)=C_T(v)$.  This is the stochastic function construction of
\cite{arjevani2023lowerbounds}, written as a deterministic base and a
randomly retained correction.

For $k\in\{0,\ldots,T\}$, let $P_k$ denote the orthogonal projection onto
$\operatorname{span}\{e_1,\ldots,e_k\}$, with $P_0=0$ and
$P_{T+1}=I_T$.  For a $C^2$ scalar function $h$, write
\[
  \mathcal J_v^2h:=\bigl(h(v),\nabla h(v),\nabla^2h(v)\bigr).
\]

\begin{lemma}[Uniform frontier split]\label{lb:lem:frontier-split}
There exist numerical constants
$C_{W,0},C_{W,1},C_{W,2},L_B<\infty$, independent of $T$, such that:

\smallskip
\noindent\textup{(a) Uniform correction bounds.}
\begin{equation}\label{lb:eq:W-bounds}
  \norm{W_T}_\infty\le C_{W,0},
  \qquad
  \sup_v\norm{\nabla W_T(v)}\le C_{W,1},
  \qquad
  \sup_v\norm{\nabla^2W_T(v)}_{\op}\le C_{W,2},
\end{equation}
and
\begin{equation}\label{lb:eq:B-smooth}
  \sup_v\norm{\nabla^2B_T(v)}_{\op}\le L_B.
\end{equation}

\smallskip
\noindent\textup{(b) Single-frontier jet identity.}
If $r=\prog_{1/2}(v)$, then, with
$\psi_{T+1}\equiv\Theta_{T+1}\equiv0$,
\begin{equation}\label{lb:eq:single-frontier}
  \mathcal J_v^2W_T=-\mathcal J_v^2\bigl(\psi_{r+1}\Theta_{r+1}\bigr).
\end{equation}
Thus the complete second-order jet of $W_T$ is contributed by at most one
chain summand.

\smallskip
\noindent\textup{(c) Quiet-base truncation.}
If $k=\prog_{1/4}(v)$, then
\begin{align}
  B_T(v)&=B_T(P_k v),\label{lb:eq:B-value-trunc}\\
  \nabla B_T(v)&=P_k\nabla B_T(P_k v),\label{lb:eq:B-grad-trunc}\\
  \nabla^2B_T(v)&=P_k\nabla^2B_T(P_k v)P_k.\label{lb:eq:B-hess-trunc}
\end{align}
In particular,
\begin{equation}\label{lb:eq:B-support}
  \operatorname{supp}\nabla B_T(v)\subseteq\{1,\ldots,k\}.
\end{equation}

\smallskip
\noindent\textup{(d) One-step correction truncation.}
For the same $k$, the following identities hold; when $k=T$, they are
understood with the convention $P_{T+1}=I_T$ and are tautological:
\begin{align}
  W_T(v)&=W_T(P_{k+1}v),\label{lb:eq:W-value-trunc}\\
  \nabla W_T(v)&=P_{k+1}\nabla W_T(P_{k+1}v),\label{lb:eq:W-grad-trunc}\\
  \nabla^2W_T(v)&=P_{k+1}\nabla^2W_T(P_{k+1}v)P_{k+1}.\label{lb:eq:W-hess-trunc}
\end{align}
Consequently,
\begin{equation}\label{lb:eq:W-support}
  \operatorname{supp}\nabla W_T(v)
  \subseteq\{1,\ldots,\min\{k+1,T\}\}.
\end{equation}

\smallskip
\noindent\textup{(e) Removal of the next-coordinate term from the base.}
If $k<T$, then
\begin{equation}\label{lb:eq:theta-next-flat}
  \Theta_{k+1}(v)=1,
  \qquad
  \nabla\Theta_{k+1}(v)=0,
  \qquad
  \nabla^2\Theta_{k+1}(v)=0,
\end{equation}
and hence
\begin{equation}\label{lb:eq:next-link-deleted}
  \mathcal J_v^2\bigl(\psi_{k+1}(1-\Theta_{k+1})\bigr)=0.
\end{equation}
\end{lemma}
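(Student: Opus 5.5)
The proof is direct bookkeeping on the definitions \eqref{lb:eq:ri-theta}--\eqref{lb:eq:WT}. Everything rests on one locality fact about the weights. Suppose $|v_j|\le 1/4$ for all $j\ge i$ on a neighborhood of $v$. Then $\gamma(v_j)=0$ there, so $r_i=0$ and $\Theta_i\equiv1$ locally, and its full $2$-jet is that of the constant $1$. Conversely, suppose $|v_j|\ge1/2$ for some $j\ge i$. Then $\gamma(v_j)=1$, so $r_i\ge1$ and $\Theta_i=\Gamma(1-r_i)$ vanishes. The jet also vanishes, by the following mechanism.
\begin{itemize}
\item $\Gamma$ is flat on $(-\infty,1/4]$, so only the region $r_i\le3/4$ matters.
\item Near points with $r_i\le 3/4$ the map $r_i$ is smooth, because $\gamma$ is smooth and flat near $0$, and $r_i>0$ forces some $\gamma(v_j)>0$.
\end{itemize}
I will also need the converse bound: on the support of $\Theta_i$ we have $r_i<3/4$, hence every $|v_j|<1/2$ for $j\ge i$. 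With these facts I then take the parts in the order (e), (b), (c)/(d), (a).

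For (e), let $k=\prog_{1/4}(v)<T$. Then $|v_j|\le1/4$ for $j\ge k+1$. Since $\gamma$ vanishes on $[-1/4,1/4]$ together with all derivatives, $r_{k+1}$ has vanishing jet at $v$. Hence $\Theta_{k+1}=\Gamma(1)$ with zero derivatives, since $\Gamma$ is flat near $1$. This gives \eqref{lb:eq:theta-next-flat}, and \eqref{lb:eq:next-link-deleted} follows by the product rule.

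For (b), let $r=\prog_{1/2}(v)$. For $i\le r$ the tail contains $|v_r|>1/2$, so $\gamma(v_r)=1$ locally, $r_i\ge1$ locally, and $\Theta_i\equiv0$ near $v$. For $i\ge r+2$ I use the chain structure: $\psi_i$ depends only on $(v_{i-1},v_i)$ with $|v_{i-1}|\le1/2$. Since $\Psi$ vanishes on $(-\infty,1/2]$ and $\Psi$ is flat at $1/2$, the jet of $\psi_i$ vanishes at $v$. The jet of a product then vanishes. Only $i=r+1$ survives, which is \eqref{lb:eq:single-frontier}.

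For (c) and (d), set $k=\prog_{1/4}(v)$, and use the single-frontier structure again. The summands of $B_T$ with $i\ge k+1$ have $\Theta_i$ locally $1$, so $\psi_i(1-\Theta_i)$ has zero jet. The remaining summands with $i\le k$ depend on $(v_{i-1},v_i)$, and on the tail only through $r_i$. Replacing $v$ by $P_kv$ kills coordinates $j>k$, where $|v_j|\le1/4$ and so $\gamma(v_j)=0$ with all derivatives. The values and $2$-jets therefore agree, giving \eqref{lb:eq:B-value-trunc}--\eqref{lb:eq:B-hess-trunc}. Derivatives in the directions $e_j$, $j>k$, vanish for the same flatness reason, which produces the $P_k$ sandwiching. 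The same argument at index $k+1$ gives (d), because $W_T$ involves $\psi_{k+1}$, which touches $v_{k+1}$. The one extra case is $r=\prog_{1/2}\le k$, in which case the single surviving summand $r+1\le k+1$ is used.

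For (a), I combine (b) with uniform bounds on a single summand $\psi_{r+1}\Theta_{r+1}$. Here $\Psi,\varphi$ and their first two derivatives are bounded, and $\Theta_{r+1}$ has bounded $2$-jet uniformly in $T$. The latter is the main obstacle: the gradient of $r_i$ involves $\gamma(v_j)\gamma'(v_j)/r_i$. On the support of $\Gamma'(1-r_i)$ we have $r_i\in[1/2,3/4]$, so the denominator is bounded below. The Hessian of $r_i$ is a sum over $j$, but its operator norm is bounded by $\max_j$ of the diagonal terms plus a rank-one term of norm $\lesssim\sum_j\gamma^2\gamma'^2/r_i^2\lesssim1$. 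This gives $C_{W,0},C_{W,1},C_{W,2}$ independent of $T$. Finally, $\nabla^2B_T=\nabla^2C_T+\nabla^2W_T$ is bounded by $L_0+C_{W,2}$ using Lemma~\ref{lb:lem:hard-chain}(ii), giving $L_B$.
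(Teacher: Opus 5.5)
Your proposal is correct and follows essentially the same route as the paper. The single-frontier identity comes from $\Theta_i\equiv0$ near $v$ for $i\le r$ and the flatness of $\Psi$ at $1/2$ for $i\ge r+2$; the truncations come from the flatness of $\gamma$ on $[-1/4,1/4]$; the uniform bounds come from splitting $\nabla^2 r_i$ into a diagonal part plus a rank-one part on the transition region $1/2<r_i<3/4$; and $L_B$ comes from $B_T=C_T+W_T$. One minor imprecision: $r_i$ need not be smooth where $r_i=0$, but you never need that, since $\Theta_i\equiv1$ on the open set $\{r_i<1/2\}$ (this is also the cleanest justification for (e)).
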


\begin{proof}
We divide the proof into four steps.

\paragraph{Step 1: dimension-free derivative bounds for the weights.}
Let
\[
  {C_{\Gamma,1}}:=\norm{\Gamma'}_\infty,
  \qquad
  {C_{\Gamma,2}}:=\norm{\Gamma''}_\infty.
\]
Fix $i$ and set $w_j(v):=\gamma(v_j)$ for $j=i,\ldots,T$.  Write
\[
  w(v):=(w_i(v),\ldots,w_T(v)),
  \qquad
  r(v):=\norm{w(v)}_2.
\]
Then $\Theta_i(v)=\Gamma(1-r(v))$.  The composition is smooth at
$r=0$ because it is constant whenever $r\le1/2$.  Its first and second
derivatives vanish for $r\le1/2$ and for $r\ge3/4$; at the two endpoints
this follows from the smooth flatness of $\Gamma$, rather than from a
two-sided neighborhood of constancy.  Therefore derivatives can be nonzero
only on
\begin{equation}\label{lb:eq:r-transition}
  \frac12<r<\frac34.
\end{equation}
On this region,
\[
  \nabla r=Dw(v)^\top\frac{w(v)}{r},
  \qquad
  \norm{\nabla r}\le {C_{\Gamma,1}}.
\]
Moreover,
\[
  \nabla^2r
  =\frac1rDw(v)^\top
  \left(I-\frac{w(v)w(v)^\top}{r^2}\right)Dw(v)
  +\sum_{j=i}^T\frac{w_j(v)}{r}\nabla^2w_j(v).
\]
The first term has operator norm at most $2{C_{\Gamma,1}}^2$ by
\eqref{lb:eq:r-transition}.  The second term is diagonal, and its operator norm
is at most ${C_{\Gamma,2}}$.  Hence
\begin{equation}\label{lb:eq:r-derivative-bounds}
  \norm{\nabla r}\le {C_{\Gamma,1}},
  \qquad
  \norm{\nabla^2r}_{\op}\le2{C_{\Gamma,1}}^2+{C_{\Gamma,2}}
\end{equation}
uniformly in $T$ and $i$ on the only region where the derivatives of
$\Theta_i$ can be nonzero.  The chain rule gives numerical constants
$C_{\Theta,1},C_{\Theta,2}$ such that
\begin{equation}\label{lb:eq:theta-derivative-bounds}
  \sup_{T,i,v}\norm{\nabla\Theta_i(v)}\le C_{\Theta,1},
  \qquad
  \sup_{T,i,v}\norm{\nabla^2\Theta_i(v)}_{\op}
  \le C_{\Theta,2}.
\end{equation}

\paragraph{Step 2: only one product has a nonzero second-order jet.}
Fix $v$ and let $r=\prog_{1/2}(v)$.  If $i\le r$, the tail $v_{\ge i}$
contains $v_r$ with $|v_r|>1/2$.  Thus $\gamma(v_r)=1$, so
$r_i(v)\ge1$ and $1-r_i(v)\le0$.  Since $\Gamma$ is identically zero on
$(-\infty,1/4]$,
\begin{equation}\label{lb:eq:early-mask-zero}
  \mathcal J_v^2\Theta_i=0
  \qquad(i\le r).
\end{equation}
If $i\ge r+2$, then $|v_{i-1}|\le1/2$.  Both $v_{i-1}$ and
$-v_{i-1}$ lie in the flat-zero region of $\Psi$, and therefore
\begin{equation}\label{lb:eq:late-link-zero}
  \mathcal J_v^2\psi_i=0
  \qquad(i\ge r+2).
\end{equation}
Combining \eqref{lb:eq:early-mask-zero} and \eqref{lb:eq:late-link-zero}, every
summand of $W_T$ has zero second-order jet except possibly the one indexed by
$r+1$.  This proves \eqref{lb:eq:single-frontier}.

\paragraph{Step 3: uniform bounds.}
Each $\psi_i$ is a translate of a fixed two-coordinate smooth template.  Hence
there are numerical constants $C_{\psi,0},C_{\psi,1},C_{\psi,2}$ such that
\[
  |\psi_i(v)|\le C_{\psi,0},
  \qquad
  \norm{\nabla \psi_i(v)}\le C_{\psi,1},
  \qquad
  \norm{\nabla^2\psi_i(v)}_{\op}\le C_{\psi,2}
\]
uniformly in $T,i,v$.  By the single-frontier identity,
\[
  |W_T(v)|\le C_{\psi,0},
\]
\[
  \norm{\nabla W_T(v)}
  \le C_{\psi,1}+C_{\psi,0}C_{\Theta,1},
\]
and
\[
  \norm{\nabla^2W_T(v)}_{\op}
  \le C_{\psi,2}+2C_{\psi,1}C_{\Theta,1}+C_{\psi,0}C_{\Theta,2}.
\]
This proves \eqref{lb:eq:W-bounds}.  Since $B_T=C_T+W_T$ and
Lemma~\ref{lb:lem:hard-chain} gives a dimension-free Hessian bound for $C_T$,
\eqref{lb:eq:B-smooth} follows.

\paragraph{Step 4: the truncation identities.}
Let $k=\prog_{1/4}(v)$.  For every $j>k$,
$|v_j|\le1/4$, and flatness of $\Gamma$ gives
\begin{equation}\label{lb:eq:tail-flat}
  \gamma(v_j)=\gamma'(v_j)=\gamma''(v_j)=0.
\end{equation}
For $i\le k$, the summand $\psi_i$ uses only $v_{i-1},v_i$, and all dependence of
$\Theta_i$ on coordinates after $k$ has zero second-order jet by
\eqref{lb:eq:tail-flat}.  If $k<T$, then $r_{k+1}(v)=0$, hence
$\Theta_{k+1}(v)=\Gamma(1)=1$ with zero first and second derivatives.  This
proves \eqref{lb:eq:theta-next-flat}--\eqref{lb:eq:next-link-deleted}.  For
$i\ge k+2$, the predecessor satisfies $|v_{i-1}|\le1/4<1/2$, so
$\mathcal J_v^2\psi_i=0$.  If $k=T$, there are no indices after $k$ and the truncation
identities are immediate.  Thus, in every case, the value, gradient, and
Hessian of $B_T$ are unchanged when all coordinates after $k$ are set to
zero, proving \eqref{lb:eq:B-value-trunc}--\eqref{lb:eq:B-hess-trunc}.

For $W_T$, if $k<T$, the only potentially active product has index $r+1$,
where $r=\prog_{1/2}(v)\le k$, and its chain summand depends on at most coordinates
$r,r+1$.  The projection $P_{k+1}$ deletes only coordinates $j>k+1$, all of
which satisfy \eqref{lb:eq:tail-flat}; in particular it leaves the frontier index
$r=\prog_{1/2}(v)$ unchanged.  On every deleted coordinate,
$\gamma,\gamma'$, and $\gamma''$ vanish, so evaluating at $P_{k+1}v$ selects
the same frontier product and preserves its complete second-order jet.  When
$k=T$, the claimed identities are tautological under $P_{T+1}=I_T$.  This
proves \eqref{lb:eq:W-value-trunc}--\eqref{lb:eq:W-hess-trunc}.
\end{proof}

\begin{corollary}[Two-vector probability-$\varpi$ zero-chain]
\label{lb:cor:two-channel-zero-chain}
For any $\varpi\in(0,1]$, let $\zeta\sim\mathrm{Bernoulli}(\varpi)$ and define
\[
  a^{(1)}(v,\zeta):=\nabla B_T(v),
  \qquad
  a^{(2)}(v,\zeta):=\frac{\zeta}{\varpi}\nabla W_T(v).
\]
Then, simultaneously for every $v$,
\[
  \max_{\ell\in\{1,2\}}\prog_0(a^{(\ell)}(v,\zeta))
  \le\prog_{1/4}(v)+1,
\]
and on the event $\{\zeta=0\}$,
\[
  \max_{\ell\in\{1,2\}}\prog_0(a^{(\ell)}(v,\zeta))
  \le\prog_{1/4}(v).
\]
The exceptional event is independent of $v$ and has probability $\varpi$.
\end{corollary}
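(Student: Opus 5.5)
The corollary follows by reading off supports from Lemma~\ref{lb:lem:frontier-split}, parts (c) and (d). Fix $v$ and set $k:=\prog_{1/4}(v)$.

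For the first vector, \eqref{lb:eq:B-support} gives $\operatorname{supp}\nabla B_T(v)\subseteq\{1,\ldots,k\}$. Hence $\prog_0(a^{(1)}(v,\zeta))\le k$ for every value of $\zeta$, with no exceptional event.

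For the second vector, \eqref{lb:eq:W-support} gives $\operatorname{supp}\nabla W_T(v)\subseteq\{1,\ldots,\min\{k+1,T\}\}$. Scaling by the scalar $\zeta/\varpi$ cannot enlarge the support, so $\prog_0(a^{(2)}(v,\zeta))\le k+1$. Combining the two bounds yields the first displayed inequality, simultaneously for all $v$ and both values of $\zeta$, since the support statements are deterministic identities in $v$.

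On $\{\zeta=0\}$ we have $a^{(2)}(v,0)=0$, so $\prog_0(a^{(2)})=0\le k$. Together with the bound on $a^{(1)}$, this gives the second inequality. The event $\{\zeta=0\}$ is defined by $\zeta$ alone, so it does not depend on $v$. Its complement has probability $\varpi$ because $\zeta\sim\mathrm{Bernoulli}(\varpi)$.

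The only point needing care is that the support bounds in Lemma~\ref{lb:lem:frontier-split} must hold pointwise for every $v$, not almost everywhere. Only then is the "simultaneously for every $v$" clause valid when $v$ is later chosen adaptively. Parts (c)–(d) are indeed stated as exact identities for each $v$, including the edge case $k=T$, where $\min\{k+1,T\}=T$ trivially bounds $\prog_0$. So no real obstacle remains.
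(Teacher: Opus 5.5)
Your proof is correct and matches the paper's argument, which the paper leaves implicit: the corollary is read off from the support statements \eqref{lb:eq:B-support} and \eqref{lb:eq:W-support} of Lemma~\ref{lb:lem:frontier-split}, which hold pointwise in $v$. On $\{\zeta=0\}$ the second channel vanishes, and the exceptional event $\{\zeta=1\}$ depends only on $\zeta$ and has probability $\varpi$.
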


\subsection{Construction and scaling of the smooth pair}
\label{lb:sec:scaling}

For $R>0$, define the soft projection
\begin{equation}\label{lb:eq:soft-projection}
  \rho_R(w):=\frac{w}{\sqrt{1+\norm w^2/R^2}},
\end{equation}
and let $J_R(w)=D\rho_R(w)$.  We use the standard choices
\begin{equation}\label{lb:eq:R-eta}
  R:=230\sqrt T,
  \qquad
  {\eta_{\rm r}}:=\frac15.
\end{equation}
Let $\Ortho(d,T):=\{U\in\R^{d\times T}:U^\top U=I_T\}$.
For $U\in\Ortho(d,T)$, define
\begin{align}
  {C_U^{\rm c}}(w)
  &:=C_T(U^\top\rho_R(w))+\frac{\eta_{\rm r}}2\norm w^2,
  \label{lb:eq:Fhat}\\
  {B_U^{\rm c}}(w)
  &:=B_T(U^\top\rho_R(w))+\frac{\eta_{\rm r}}2\norm w^2,
  \label{lb:eq:Bhat}\\
  {W_U^{\rm c}}(w)
  &:=W_T(U^\top\rho_R(w)).
  \label{lb:eq:What}
\end{align}
Then
\begin{equation}\label{lb:eq:compressed-split}
  {C_U^{\rm c}}={B_U^{\rm c}}-{W_U^{\rm c}}.
\end{equation}

\begin{lemma}[Compressed hard-chain properties]
\label{lb:lem:compressed-hard-chain}
There is a numerical constant $L_{\rm hc}$ such that, uniformly over
$T,d,U$:
\begin{enumerate}[label=(\roman*)]
  \item
  ${C_U^{\rm c}}(0)-\inf_w{C_U^{\rm c}}(w)\le\Delta_{\rm c}T$;
  \item $\nabla{C_U^{\rm c}}$ is $L_{\rm hc}$-Lipschitz;
  \item if
  \begin{equation}\label{lb:eq:compressed-progress-condition}
    \prog_{1/4}(U^\top\rho_R(w))<T,
  \end{equation}
  then
  \begin{equation}\label{lb:eq:compressed-gradient-lb}
    \norm{\nabla{C_U^{\rm c}}(w)}\ge\frac12;
  \end{equation}
  \item
  $\norm{J_R(w)}_{\op}\le1$ and
  \begin{equation}\label{lb:eq:J-Lipschitz}
    \norm{J_R(w)-J_R(w')}_{\op}
    \le\frac3R\norm{w-w'}.
  \end{equation}
\end{enumerate}
One may take $L_{\rm hc}=155$.
\end{lemma}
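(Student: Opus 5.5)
We prove the four items in the order (iv), (ii), (i), (iii), since (iv) feeds the Lipschitz bound and the progress argument. Throughout we write $v(w)=U^\top\rho_R(w)$ and use the chain rule
\[
\nabla C_U^{\rm c}(w)=J_R(w)U\nabla C_T(v(w))+\eta_{\rm r}w,
\]
noting that $J_R(w)$ is symmetric.

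\textbf{Item (iv).} We compute the Jacobian directly:
\[
J_R(w)=\frac{1}{s}\Bigl(I-\frac{ww^\top}{R^2s^2}\Bigr),\qquad s=\sqrt{1+\|w\|^2/R^2}.
\]
Its eigenvalues are $1/s$ and $1/s^3$, both in $(0,1]$, which gives $\|J_R\|_{\op}\le1$. For the Lipschitz bound we differentiate $J_R$ once more and bound the third-order tensor $D^2\rho_R$ in operator norm. Scaling $w=R\tilde w$ reduces this to bounding $D^2\rho_1$ uniformly by $3$. The derivative consists of terms of the form $\tilde w/s^3$ and $\tilde w\tilde w\tilde w/s^5$, and each such term is bounded by a constant because $\|\tilde w\|/s\le1$. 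Adding their contributions gives a constant of at most $3$, and the bound $3/R$ follows by the mean value theorem.

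\textbf{Item (ii).} Differentiating $w\mapsto J_R(w)U\nabla C_T(v(w))$ produces two terms. The first is $J_R U\nabla^2C_T U^\top J_R$, which has norm at most $L_0=152$ by Lemma~\ref{lb:lem:hard-chain}(ii) and $\|U\|_{\op}=1$. The second is the variation of $J_R$ applied to $U\nabla C_T$. To bound it, we need $\|\nabla C_T\|_2$, which is at most $G_\infty\sqrt T$ only if we use $\ell_\infty$ control carefully. So we use instead that each coordinate gradient is bounded and that $\nabla C_T$ is supported on coordinates related to the nonzero ones. The crude estimate $\|\nabla C_T\|\le G_\infty\sqrt T=23\sqrt T$, combined with the $3/R=3/(230\sqrt T)$ bound from (iv), gives a contribution of at most $69/230<1$. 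The quadratic term contributes $\eta_{\rm r}=1/5$. The total is therefore at most $152+1+1/5\le155$.

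\textbf{Item (i).} We have $C_U^{\rm c}(0)=C_T(0)$, and since the quadratic term is nonnegative,
\[
\inf_w C_U^{\rm c}\ge\inf_v C_T(v).
\]
The claim then follows from Lemma~\ref{lb:lem:hard-chain}(i).

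\textbf{Item (iii).} This is the main obstacle. Assume $k=\prog_{1/4}(v(w))<T$. We split into a large-norm case and a bounded case, with threshold $\|w\|>R/2$.

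In the large-norm case $\|w\|>R/2$, we use the inner product with $w$:
\[
\langle\nabla C_U^{\rm c}(w),w\rangle\ge\eta_{\rm r}\|w\|^2-\|J_Rw\|\,\|\nabla C_T\|.
\]
Since $J_Rw=w/s^3$ and $\|\nabla C_T\|\le23\sqrt T$, this yields $\|\nabla C_U^{\rm c}\|\ge\eta_{\rm r}\|w\|-23\sqrt T$. At $\|w\|\ge R/2=115\sqrt T$ the right side equals $0$, which is insufficient. We therefore sharpen the estimate using the factor $1/s^3$: write $\|w\|=tR$, so the chain term is at most $23\sqrt T/s^3$ while the regularizer gives $\eta_{\rm r}tR=46t\sqrt T$. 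For $t\ge1/2$, $s^3\ge(5/4)^{3/2}>1.39$, giving $46t\sqrt T-16.6\sqrt T\ge1/2$. If this margin proves too tight, we fall back on the standard argument of \citet{carmon2020stationary}, which is what the constants $230$ and $1/5$ were calibrated for.

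In the bounded case $\|w\|\le R/2$, we have $s\le\sqrt{5}/2$. By Lemma~\ref{lb:lem:hard-chain}(v), applied at coordinate $k+1$ with the $1/4$ threshold in place of $1$, we get $|\partial_{k+1}C_T(v)|\ge1$; this threshold substitution needs justification, and we expect to verify it from the explicit formulas for $\Psi$ and $\varphi$ (the bound $|v_k|\ldots$). We then project the gradient onto $Ue_{k+1}$. The $\eta_{\rm r}w$ component along this direction is $\eta_{\rm r}s\,v_{k+1}$, which is at most $\eta_{\rm r}s/4\le0.14$. The chain component is at least
\[
\langle e_{k+1},U^\top J_RU\nabla C_T\rangle,
\]
where $J_R$ differs from $I/s$ by a rank-one correction of size at most $\|w\|^2/(R^2s^3)$. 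Bounding this correction in the same way as above leaves a component of at least roughly $1/s-0.3-0.14\ge1/2$.
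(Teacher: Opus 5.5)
Your items (i), (ii), (iv) and the large-norm case of (iii) are sound and essentially follow the paper. In the large-norm case, your identity $J_R(w)w=w/s^3$ even gives a slightly better margin than the paper's $\norm{J_R(w)}_{\op}\le 1/s$. The genuine gap is the step you flagged in the bounded case $\norm{w}\le R/2$. The ``threshold substitution'' is not merely unverified; it is false, so no computation with $\Psi,\varphi$ will supply it. Indeed, $\partial_{k+1}C_T(v)=-\Psi(-v_k)\varphi'(-v_{k+1})-\Psi(v_k)\varphi'(v_{k+1})-\Psi'(-v_{k+1})\varphi(-v_{k+2})-\Psi'(v_{k+1})\varphi(v_{k+2})$, and $\Psi,\Psi'$ vanish on $(-\infty,1/2]$. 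Hence any $v$ with $k=\prog_{1/4}(v)\ge1$ and $|v_k|\in(1/4,1/2]$ has $\partial_{k+1}C_T(v)=0$. For example, $v=(0.3,0,\ldots,0)$ satisfies $\prog_{1/4}(v)=1<T$ but $\partial_2C_T(v)=0$. The missing idea is monotonicity of $\prog_\alpha$ in $\alpha$: since $\prog_1(h)\le\prog_{1/4}(h)<T$, take $j=\prog_1(h)+1$ and apply Lemma~\ref{lb:lem:hard-chain}(v) exactly as stated. The price is that you then only know $|h_j|\le1$, not $|h_j|\le1/4$.

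With $|h_j|\le1$ the constants must be rechecked. The rank-one part of $J_R$ also has to be handled after projecting onto $u_j=Ue_j$, not through its operator norm. Write $v=\rho_R(w)$ and $v_g=U\nabla C_T(h)$. Then $\ip{u_j}{J_R(w)v_g}=\frac1s\partial_jC_T(h)-\frac{h_j\ip{v}{v_g}}{R^2s}$, so the correction is at most $\norm{v}\norm{v_g}/(R^2s)\le 23\sqrt T/(2R)=1/20$. By contrast, the operator-norm bound $\norm{w}^2/(R^2s^3)$ times $\norm{\nabla C_T}\le23\sqrt T$ is of order $\sqrt T$. The regularizer component is $\eta_{\rm r}s|h_j|\le\sqrt5/10$. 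Together these give $\norm{\nabla C_U^{\rm c}(w)}\ge 2/\sqrt5-1/20-\sqrt5/10\approx0.62>1/2$, which is the paper's computation. Your own tally $1/s-0.3-0.14$ does not close as written. The $0.3$ is not derived from anything, and at $s=\sqrt5/2$ the expression evaluates to about $0.45<1/2$.
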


\begin{proof}
The compression estimates in Lemma~15 of \cite{arjevani2023lowerbounds} give
(iv) and show that $\rho_R$ is $1$-Lipschitz.  The nonnegative quadratic
term vanishes at zero, so (i) follows from Lemma~\ref{lb:lem:hard-chain}.
For (ii), the chain rule gives the gradient Lipschitz bound
\[
  L_0+\frac{3G_\infty\sqrt T}{R}+{\eta_{\rm r}}
  =152+\frac3{10}+\frac15<155.
\]
It remains to prove (iii).  Put
\[
  v:=\rho_R(w),\qquad h:=U^\top v,\qquad
  {v_g}:=U\nabla C_T(h),\qquad
  c:=\sqrt{1+\norm w^2/R^2}.
\]
Then $w=cv$, $\norm {v_g}\le G_\infty\sqrt T=23\sqrt T$, and
\begin{equation}\label{lb:eq:compressed-gradient-formula}
  \nabla{C_U^{\rm c}}(w)
  =\frac{I-vv^\top/R^2}{c}\,{v_g}+{\eta_{\rm r}} w.
\end{equation}

First suppose $\norm w\le R/2$.  Condition
\eqref{lb:eq:compressed-progress-condition} implies
$k:=\prog_1(h)<T$.  Set $j:=k+1$.  Then $|h_j|\le1$, while
Lemma~\ref{lb:lem:hard-chain} gives
$|\ip{u_j}{{v_g}}|=|\partial_jC_T(h)|\ge1$.  Since
$c\le\sqrt5/2$ and $\norm v\le R/2$, equation
\eqref{lb:eq:compressed-gradient-formula} yields
\begin{align*}
\norm{\nabla{C_U^{\rm c}}(w)}
&\ge\left|\ip{u_j}{\nabla{C_U^{\rm c}}(w)}\right|\\
&\ge \frac1c
 -\frac{|h_j|\,|\ip v {v_g}|}{R^2c}
 -{\eta_{\rm r}} c|h_j|\\
&\ge \frac{2}{\sqrt5}
 -\frac{23\sqrt T}{2R}
 -\frac{{\eta_{\rm r}}\sqrt5}{2}\\
&=\frac{2}{\sqrt5}-\frac1{20}-\frac{\sqrt5}{10}
>\frac12,
\end{align*}
where the equality uses $R=230\sqrt T$ and ${\eta_{\rm r}}=1/5$.

Now suppose $\norm w>R/2$.  Lemma~15 of \cite{arjevani2023lowerbounds} gives
$\norm{J_R(w)}_{\op}\le c^{-1}\le2/\sqrt5$, so
\begin{align*}
\norm{\nabla{C_U^{\rm c}}(w)}
&\ge {\eta_{\rm r}}\norm w-\norm{J_R(w)}_{\op}\norm {v_g}\\
&>\frac{R}{10}-\frac{46}{\sqrt5}\sqrt T\\
&=\left(23-\frac{46}{\sqrt5}\right)\sqrt T
>\frac12.
\end{align*}
This proves \eqref{lb:eq:compressed-gradient-lb}.  The numerical smoothness
constant $L_{\rm hc}=155$ is the corresponding bound obtained from the same
compression estimates.
\end{proof}

\begin{lemma}[Compressed correction bounds]
\label{lb:lem:compressed-correction}
There are numerical constants
${C_{W,0}^{\rm c}},{C_{W,1}^{\rm c}},{C_{W,2}^{\rm c}}$ such that, uniformly in
$T,d,U$,
\begin{equation}\label{lb:eq:What-bounds}
  \norm{{W_U^{\rm c}}}_\infty\le{C_{W,0}^{\rm c}},
  \qquad
  \sup_w\norm{\nabla{W_U^{\rm c}}(w)}\le{C_{W,1}^{\rm c}},
\end{equation}
\begin{equation}\label{lb:eq:What-hessian-bound}
  \sup_w\norm{\nabla^2{W_U^{\rm c}}(w)}_{\op}
  \le{C_{W,2}^{\rm c}}.
\end{equation}
Consequently, $\nabla{B_U^{\rm c}}$ is
$(L_{\rm hc}+{C_{W,2}^{\rm c}})$-Lipschitz.
\end{lemma}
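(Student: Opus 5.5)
The plan is to differentiate the composition $W_U^{\rm c}(w)=W_T(U^\top\rho_R(w))$ directly and bound each term with the dimension-free estimates in Lemma~\ref{lb:lem:frontier-split}(a) and the compression estimates in Lemma~\ref{lb:lem:compressed-hard-chain}(iv).

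\textbf{Step 1: value bound.} The sup-norm bound is immediate. Since $W_U^{\rm c}$ is $W_T$ evaluated at a point, $\|W_U^{\rm c}\|_\infty\le\|W_T\|_\infty\le C_{W,0}$, so we take $C_{W,0}^{\rm c}=C_{W,0}$.

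\textbf{Step 2: gradient bound.} Write $h=U^\top\rho_R(w)$. The chain rule gives
\[
\nabla W_U^{\rm c}(w)=J_R(w)^\top U\nabla W_T(h).
\]
Since $U^\top U=I_T$, we have $\|U\|_{\op}\le1$. Lemma~\ref{lb:lem:compressed-hard-chain}(iv) gives $\|J_R\|_{\op}\le1$. Hence $\|\nabla W_U^{\rm c}\|\le C_{W,1}$, and we take $C_{W,1}^{\rm c}=C_{W,1}$.

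\textbf{Step 3: Hessian bound.} Differentiating once more gives
\[
\nabla^2W_U^{\rm c}(w)=J_R^\top U\nabla^2W_T(h)U^\top J_R+\sum_{i}\bigl(U\nabla W_T(h)\bigr)_i\nabla^2(\rho_R)_i(w).
\]
The first term has operator norm at most $C_{W,2}$. The second term is the main obstacle. We need a bound on $\sum_i g_i\nabla^2(\rho_R)_i$ with $g=U\nabla W_T(h)$ that does not grow with $T$ or $d$.

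The approach is to rewrite this term as the derivative of $w\mapsto J_R(w)^\top g$ with $g$ held fixed. Its operator norm is therefore bounded by the Lipschitz constant of $J_R$ times $\|g\|$. Equivalently, for a unit vector $e$,
\[
\Bigl\|\sum_i g_i\nabla^2(\rho_R)_ie\Bigr\|=\bigl\|\bigl(D J_R(w)[e]\bigr)^\top g\bigr\|\le\|DJ_R(w)[e]\|_{\op}\|g\|\le\tfrac3R\|g\|,
\]
using \eqref{lb:eq:J-Lipschitz}. Since $\|g\|\le C_{W,1}$, this term is at most $3C_{W,1}/R\le 3C_{W,1}/230$. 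Note that we use $\|\nabla W_T\|_2\le C_{W,1}$, the Euclidean bound from \eqref{lb:eq:W-bounds}. This is what avoids the $\sqrt T$ factor that appeared in Lemma~\ref{lb:lem:compressed-hard-chain} for $C_T$, where only an $\ell_\infty$ gradient bound was available.

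To be careful, the symmetric bilinear form $\sum_i g_i\nabla^2(\rho_R)_i$ should be checked via $e^\top(\cdot)e'$ equal to $g^\top D^2\rho_R[e,e']$, which is bounded by $\|g\|\,\|DJ_R[e]\|_{\op}\le 3\|g\|/R$. Combining the two terms gives $C_{W,2}^{\rm c}=C_{W,2}+3C_{W,1}/R$, which is bounded by a numerical constant since $R\ge230$.

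\textbf{Step 4: consequence for $B_U^{\rm c}$.} By \eqref{lb:eq:compressed-split}, $B_U^{\rm c}=C_U^{\rm c}+W_U^{\rm c}$. The function $\nabla C_U^{\rm c}$ is $L_{\rm hc}$-Lipschitz by Lemma~\ref{lb:lem:compressed-hard-chain}(ii). The function $\nabla W_U^{\rm c}$ is $C_{W,2}^{\rm c}$-Lipschitz by the Hessian bound of Step 3 and the mean value theorem. Adding the two yields the stated Lipschitz constant $L_{\rm hc}+C_{W,2}^{\rm c}$.
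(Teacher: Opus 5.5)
Your proof is correct and follows the paper's argument. The paper bounds $\norm{\nabla W_U^{\rm c}(w)-\nabla W_U^{\rm c}(w')}$ directly by the same two-term split: the variation of $J_R$, with constant $3/R$, against $\norm{\nabla W_T}\le C_{W,1}$, plus $C_{W,2}$ against the $1$-Lipschitz map $U^\top\rho_R$. Your explicit Hessian decomposition is the infinitesimal form of this split and gives the same constant $C_{W,2}+3C_{W,1}/R$.
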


\begin{proof}
The value bound follows from Lemma~\ref{lb:lem:frontier-split}.  By the chain
rule,
\[
  \nabla{W_U^{\rm c}}(w)
  =J_R(w)^\top U\nabla W_T(U^\top\rho_R(w)),
\]
so the gradient bound follows from $\norm{J_R(w)}_{\op}\le1$.  For $w,w'$, use
Lemma~\ref{lb:lem:frontier-split}, the $1$-Lipschitzness of $\rho_R$, and
\eqref{lb:eq:J-Lipschitz} to obtain
\begin{align*}
&\norm{\nabla{W_U^{\rm c}}(w)
      -\nabla{W_U^{\rm c}}(w')}\\
&\quad\le
\frac{3C_{W,1}}R\norm{w-w'}
+C_{W,2}\norm{w-w'}.
\end{align*}
Since $R\ge1$ for $T\ge1$, this gives a numerical Hessian bound.  Finally,
${B_U^{\rm c}}={C_U^{\rm c}}+{W_U^{\rm c}}$.
\end{proof}

Set
\begin{equation}\label{lb:eq:ell-star}
  \ell_\star
  :=L_{\rm hc}+{C_{W,2}^{\rm c}}+1.
\end{equation}
For sufficiently small $\eps$, choose
\begin{equation}\label{lb:eq:T-choice}
  T:=\left\lfloor c_T\eps^{-2}\right\rfloor,
  \qquad
  c_T:=\frac{1}{64\Delta_{\rm c}\ell_\star},
\end{equation}
so $T\ge4$.  Define
\begin{equation}\label{lb:eq:lambda-a}
  {\lambda_{\rm lb}}:=4\ell_\star\eps,
  \qquad
  a:=\frac{{\lambda_{\rm lb}}^2}{\ell_\star}=16\ell_\star\eps^2.
\end{equation}
Define the base, correction, and nonconvex objective by
\begin{align}
  {A_U}(x)
  &:=a{B_U^{\rm c}}(x/{\lambda_{\rm lb}}),
  \label{lb:eq:scaled-Phi}\\
  u_U(x)
  &:=a{W_U^{\rm c}}(x/{\lambda_{\rm lb}}),
  \label{lb:eq:scaled-u}\\
  P_U(x)
  &:={A_U}(x)-u_U(x)
   =a{C_U^{\rm c}}(x/{\lambda_{\rm lb}}).
  \label{lb:eq:scaled-F}
\end{align}

\begin{proposition}[Properties of the scaled pair]
\label{lb:prop:scaled-pair}
There are numerical constants $C_{u,0},C_{u,1},C_{u,2}$ such that, uniformly
in $U,T,\eps$,
\begin{equation}\label{lb:eq:scaled-u-bounds}
  \norm{u_U}_\infty\le C_{u,0}\eps^2,
  \qquad
  \sup_x\norm{\nabla u_U(x)}\le C_{u,1}\eps,
  \qquad
  \sup_x\norm{\nabla^2u_U(x)}_{\op}\le C_{u,2}.
\end{equation}
Moreover,
\begin{equation}\label{lb:eq:Phi-hessian-bound}
  \sup_x\norm{\nabla^2{A_U}(x)}_{\op}\le1,
\end{equation}
\begin{equation}\label{lb:eq:scaled-gap}
  P_U(0)-\inf_xP_U(x)
  \le\frac14,
\end{equation}
and
\begin{equation}\label{lb:eq:scaled-stationarity}
  \prog_{1/4}\!\left(U^\top\rho_R(x/{\lambda_{\rm lb}})\right)<T
  \quad\Longrightarrow\quad
  \norm{\nabla P_U(x)}\ge2\eps.
\end{equation}
\end{proposition}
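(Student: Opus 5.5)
The plan is to verify each claim by rescaling the compressed-chain lemmas via the chain rule, with $x=\lambda_{\rm lb}w$ and the relation $a=\lambda_{\rm lb}^2/\ell_\star$.

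\textbf{Correction bounds.} Since $u_U(x)=aW_U^{\rm c}(x/\lambda_{\rm lb})$, we have
\[
\nabla u_U(x)=\frac{a}{\lambda_{\rm lb}}\nabla W_U^{\rm c}(x/\lambda_{\rm lb}),\qquad
\nabla^2u_U(x)=\frac{a}{\lambda_{\rm lb}^2}\nabla^2W_U^{\rm c}(x/\lambda_{\rm lb}).
\]
Here $a=16\ell_\star\eps^2$, $a/\lambda_{\rm lb}=4\eps$ and $a/\lambda_{\rm lb}^2=1/\ell_\star$. Lemma~\ref{lb:lem:compressed-correction} then gives \eqref{lb:eq:scaled-u-bounds} with
\[
C_{u,0}=16\ell_\star C_{W,0}^{\rm c},\qquad C_{u,1}=4C_{W,1}^{\rm c},\qquad C_{u,2}=C_{W,2}^{\rm c}/\ell_\star.
\]

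\textbf{Smoothness of $A_U$.} In the same way, $\nabla^2A_U$ equals $\ell_\star^{-1}\nabla^2B_U^{\rm c}$, and this has operator norm at most $(L_{\rm hc}+C_{W,2}^{\rm c})/\ell_\star<1$ by Lemma~\ref{lb:lem:compressed-correction} and \eqref{lb:eq:ell-star}. One technical point needs care: $B_U^{\rm c}$ is only shown to have a Lipschitz gradient. Either the bound should be read as a Lipschitz-gradient bound, or one notes that $B_T$ and $\rho_R$ are smooth ($\Gamma\in C^\infty$, and $\Psi,\varphi$ are smooth), so the Hessian exists and the Lipschitz constant bounds it.

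\textbf{Initial gap.} $P_U(0)-\inf P_U=a\bigl(C_U^{\rm c}(0)-\inf C_U^{\rm c}\bigr)$, which is at most $a\Delta_{\rm c}T$ by Lemma~\ref{lb:lem:compressed-hard-chain}(i). Using $T\le c_T\eps^{-2}$, this is at most
\[
16\ell_\star\eps^2\cdot\Delta_{\rm c}\cdot\frac{1}{64\Delta_{\rm c}\ell_\star}\,\eps^{-2}=\frac14 .
\]

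\textbf{Stationarity.} Apply $\nabla P_U(x)=(a/\lambda_{\rm lb})\nabla C_U^{\rm c}(x/\lambda_{\rm lb})=4\eps\,\nabla C_U^{\rm c}(w)$. Under the progress hypothesis at $w=x/\lambda_{\rm lb}$, Lemma~\ref{lb:lem:compressed-hard-chain}(iii) gives $\|\nabla C_U^{\rm c}(w)\|\ge1/2$, hence $\|\nabla P_U(x)\|\ge2\eps$.

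\textbf{Main obstacle.} The only real difficulty is the bookkeeping of the constants. In particular, the chosen $\lambda_{\rm lb}$ and $a$ must make $a/\lambda_{\rm lb}=4\eps$ exactly and $a/\lambda_{\rm lb}^2=1/\ell_\star$. They also must make $aT\Delta_{\rm c}\le1/4$ hold for the floor choice of $T$, uniformly in $U$, $T$, $d$.
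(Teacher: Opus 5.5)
Your proposal is correct and follows essentially the same route as the paper. Both rescale Lemmas~\ref{lb:lem:compressed-correction} and~\ref{lb:lem:compressed-hard-chain} through the identities $a/\lambda_{\rm lb}=4\eps$ and $a/\lambda_{\rm lb}^2=1/\ell_\star$, bound the gap by $a\Delta_{\rm c}T\le 1/4$, and transfer the gradient lower bound via $\nabla P_U=4\eps\,\nabla C_U^{\rm c}(x/\lambda_{\rm lb})$. Your explicit constants, and your remark that smoothness of $B_U^{\rm c}$ lets the Lipschitz-gradient bound control the Hessian, fill in details the paper leaves implicit.
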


\begin{proof}
The three correction scales follow from
Lemma~\ref{lb:lem:compressed-correction} and
\[
  a=16\ell_\star\eps^2,
  \qquad
  \frac a{\lambda_{\rm lb}}=4\eps,
  \qquad
  \frac a{{\lambda_{\rm lb}}^2}=\frac1{\ell_\star}.
\]
The same identities and the definition of $\ell_\star$ give
\eqref{lb:eq:Phi-hessian-bound}.  By
Lemma~\ref{lb:lem:compressed-hard-chain} and \eqref{lb:eq:T-choice},
\[
  P_U(0)-\inf P_U
  \le a\Delta_{\rm c}T
  \le16\ell_\star\eps^2\Delta_{\rm c}c_T\eps^{-2}
  =\frac14.
\]
Finally,
\[
  \nabla P_U(x)
  =\frac a{\lambda_{\rm lb}}\nabla{C_U^{\rm c}}(x/{\lambda_{\rm lb}})
  =4\eps\nabla{C_U^{\rm c}}(x/{\lambda_{\rm lb}}),
\]
and \eqref{lb:eq:scaled-stationarity} follows from
Lemma~\ref{lb:lem:compressed-hard-chain}.
\end{proof}

\subsection{Finite-channel random rotation}
\label{lb:sec:random-rotation}

The comparison oracle returns two ambient vectors and a scalar.
We prove a random-rotation bound that permits these separate channels,
including the scalar information, and applies to arbitrary adaptive
queries and final outputs.

\begin{definition}[Uniform finite-channel probability-$\varpi$ zero-chain]
\label{lb:def:finite-chain}
Fix $\varpi\in[0,1]$.  Let $\mathcal Y,\Xi,\mathcal S$ be standard Borel spaces, let
$\xi\sim\nu$, and let
\[
  s:\R^T\times\mathcal Y\times\Xi\to\mathcal S,
\]
\[
  a^{(\ell)}:\R^T\times\mathcal Y\times\Xi\to\R^T,
  \qquad \ell\in[m],
\]
be Borel maps.  We call $(s,a^{(1)},\ldots,a^{(m)})$ a uniform finite-channel
probability-$\varpi$ zero-chain if there is a measurable event
$E\subseteq\Xi$ with $\nu(E)\le \varpi$ such that, for every $(v,y,\xi)$,
\begin{equation}\label{lb:eq:finite-chain-one-step}
  \max_{\ell\in[m]}\prog_0(a^{(\ell)}(v,y,\xi))
  \le\prog_{1/4}(v)+1,
\end{equation}
and, whenever $\xi\notin E$,
\begin{equation}\label{lb:eq:finite-chain-quiet}
  \max_{\ell\in[m]}\prog_0(a^{(\ell)}(v,y,\xi))
  \le\prog_{1/4}(v).
\end{equation}
The map $s$ collects all non-ambient channels.  It may depend on the hidden
rotation only through the latent query $v$.
\end{definition}

For $U\in\Ortho(d,T)$, define the canonical rotated response
\begin{equation}\label{lb:eq:canonical-rotated-oracle}
  \widetilde{\cO}_U(x,y;\xi)
  :=\left(
  s(U^\top x,y,\xi),
  Ua^{(1)}(U^\top x,y,\xi),\ldots,
  Ua^{(m)}(U^\top x,y,\xi)
  \right).
\end{equation}

\begin{lemma}[Projection accumulation; cf. Lemma 12 of \cite{arjevani2023lowerbounds}]
\label{lb:lem:projection-accumulation}
Fix integers $n,T\ge1$, orthonormal vectors $u_1,\ldots,u_T\in\R^d$,
and vectors $x_1,\ldots,x_n\in\R^d$ with $\norm{x_i}\le R$.  For
$i\in\{0,\ldots,n\}$ and $j\in\{0,\ldots,T\}$, let
\[
  \mathsf S_j^{(i)}
  :=\operatorname{span}\{u_1,\ldots,u_j,x_1,\ldots,x_i\},
\]
let $P_j^{(i)}$ be the orthogonal projection onto
$(\mathsf S_j^{(i)})^\perp$, and put
$\Pi_j^{(i)}:=I-P_j^{(i)}$.  For $i\in[n]$ and $j\in[T]$, define
\begin{equation}\label{lb:eq:Gij}
  \mathcal G_j^{(i)}
  :=\left\{
  \norm{\Pi_{j-1}^{(i)}P_{j-1}^{(i-1)}u_j}
  <\frac{1}{4R\sqrt n}
  \right\}.
\end{equation}
Then, for every $i\in[n]$ and $j\in[T]$,
\begin{equation}\label{lb:eq:projection-accumulation-conclusion}
  \bigcap_{q=1}^i\mathcal G_j^{(q)}
  \quad\Longrightarrow\quad
  |\ip{u_j}{x_i}|
  \le\frac{\norm{x_i}}{4R}\sqrt{\frac in}
  \le\frac14.
\end{equation}
\end{lemma}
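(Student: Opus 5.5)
We decompose $x_i$ along the nested spans. Write $x_i = \Pi_{j-1}^{(i-1)}x_i + P_{j-1}^{(i-1)}x_i$. The vector $\Pi_{j-1}^{(i-1)}x_i$ lies in $\mathsf S_{j-1}^{(i-1)}$.

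The key identity is
\[
\ip{u_j}{x_i}=\sum_{q=1}^i \ip{P_{j-1}^{(q-1)}u_j}{\hat x_q}\,c_q ,
\]
where $\hat x_q$ are the Gram--Schmidt directions obtained when $x_q$ is added to $\mathsf S_{j-1}^{(q-1)}$. More concretely, we plan to proceed as follows.

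First, since $u_j\perp u_1,\ldots,u_{j-1}$, the inner product $\ip{u_j}{x_i}$ only sees the component of $x_i$ orthogonal to $\operatorname{span}\{u_1,\ldots,u_{j-1}\}$. Decompose the space $\mathsf S_{j-1}^{(i)}$ by the orthogonal increments
\[
\mathsf S_{j-1}^{(q)}\ominus\mathsf S_{j-1}^{(q-1)},\qquad q=1,\ldots,i .
\]
Each increment is at most one-dimensional. Let it be spanned by a unit vector $e_q$ (or $e_q=0$), together with the piece $\operatorname{span}\{u_1,\ldots,u_{j-1}\}$.

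Since $x_i\in\mathsf S_{j-1}^{(i)}$ and $u_j\perp u_{<j}$, we obtain
\[
\ip{u_j}{x_i}=\sum_{q\le i}\ip{u_j}{e_q}\ip{e_q}{x_i}.
\]
Next observe that $e_q=P_{j-1}^{(q-1)}e_q$. Hence $\ip{u_j}{e_q}=\ip{P_{j-1}^{(q-1)}u_j}{e_q}$, and because $e_q\in\mathsf S_{j-1}^{(q)}$, its absolute value is at most $\norm{\Pi_{j-1}^{(q)}P_{j-1}^{(q-1)}u_j}$. On $\mathcal G_j^{(q)}$ this is less than $1/(4R\sqrt n)$.

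By Cauchy--Schwarz and orthonormality of the $e_q$,
\[
|\ip{u_j}{x_i}|\le\Bigl(\sum_{q\le i}\ip{u_j}{e_q}^2\Bigr)^{1/2}\Bigl(\sum_q\ip{e_q}{x_i}^2\Bigr)^{1/2}
\le \sqrt{i}\,\frac{1}{4R\sqrt n}\,\norm{x_i}.
\]
This is the first inequality. The second follows from $\norm{x_i}\le R$ and $i\le n$.

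The main point to check carefully is the claim that $\Pi_{j-1}^{(q)}P_{j-1}^{(q-1)}u_j$ is exactly the projection of $P_{j-1}^{(q-1)}u_j$ onto $e_q$. This holds because $P_{j-1}^{(q-1)}u_j$ is orthogonal to $\mathsf S_{j-1}^{(q-1)}$, and $\mathsf S_{j-1}^{(q)}=\mathsf S_{j-1}^{(q-1)}\oplus\operatorname{span}\{e_q\}$. Therefore its component in $\mathsf S_{j-1}^{(q)}$ is $\ip{P_{j-1}^{(q-1)}u_j}{e_q}e_q=\ip{u_j}{e_q}e_q$. Its norm is then $|\ip{u_j}{e_q}|$, giving the bound with equality.

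We also need to handle the degenerate cases. If $x_q$ already lies in $\mathsf S_{j-1}^{(q-1)}$, the increment is trivial: set $e_q=0$ and its contribution vanishes. The case $j=1$ (no previous $u$'s) is covered by the same argument.
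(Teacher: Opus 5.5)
Your argument is correct and is essentially the paper's proof. The paper writes $\Pi_{j-1}^{(i)}=\sum_{r<j}u_ru_r^\top+\sum_{q\le i}P_{j-1}^{(q-1)}\Pi_{j-1}^{(q)}P_{j-1}^{(q-1)}$, where each summand is the projector onto your one-dimensional increment $\operatorname{span}\{e_q\}$, and then applies the same Cauchy--Schwarz step, bounding the second factor by $\norm{x_i}$ just as your Bessel inequality does (your explicit basis $e_q$ simply makes the rank-one increments concrete).
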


\begin{proof}
For fixed $j$, the spaces $\mathsf S_{j-1}^{(q)}$ are nested in $q$.
Consequently,
\[
  \Pi_{j-1}^{(q-1)}
  +P_{j-1}^{(q-1)}\Pi_{j-1}^{(q)}P_{j-1}^{(q-1)}
  =\Pi_{j-1}^{(q)}.
\]
Iterating this identity gives
\begin{equation}\label{lb:eq:projection-telescope}
  \Pi_{j-1}^{(i)}
  =\sum_{r=1}^{j-1}u_ru_r^\top
  +\sum_{q=1}^i
  P_{j-1}^{(q-1)}\Pi_{j-1}^{(q)}P_{j-1}^{(q-1)}.
\end{equation}
Since $x_i\in\mathsf S_{j-1}^{(i)}$ and $u_j\perp u_r$ for $r<j$,
\eqref{lb:eq:projection-telescope} and Cauchy--Schwarz yield
\begin{align}
  |\ip{u_j}{x_i}|
  &\le
  \left(
  \sum_{q=1}^i
  \norm{\Pi_{j-1}^{(q)}P_{j-1}^{(q-1)}u_j}^2
  \right)^{1/2}
  \nonumber\\
  &\qquad\times
  \left(
  \sum_{q=1}^i
  \norm{\Pi_{j-1}^{(q)}P_{j-1}^{(q-1)}x_i}^2
  \right)^{1/2}.
  \label{lb:eq:projection-cs}
\end{align}
On $\cap_{q=1}^i\mathcal G_j^{(q)}$, the first squared factor is less
than $i/(16R^2n)$.  Moreover, subtracting the first term on the right of
\eqref{lb:eq:projection-telescope} shows that
\[
  \sum_{q=1}^i
  P_{j-1}^{(q-1)}\Pi_{j-1}^{(q)}P_{j-1}^{(q-1)}
  \preceq I.
\]
The second squared factor in \eqref{lb:eq:projection-cs} is therefore at most
$\norm{x_i}^2$.  This proves \eqref{lb:eq:projection-accumulation-conclusion}.
\end{proof}

For the remaining geometric lemmas, fix an adaptive algorithm and a horizon
$n$.  Assume that $U$ has the uniform invariant Stiefel distribution on
$\Ortho(d,T)$, independently of the
algorithm seed and the i.i.d. oracle seeds, and write
$U=[u_1,\ldots,u_T]$ and
\[
  v_i:=U^\top x_i,
  \qquad
  a_i^{(\ell)}:=a^{(\ell)}(v_i,y_i,\xi_i),
\]
\[
  \gamma_i
  :=\max_{\substack{1\le q\le i\\1\le\ell\le m}}
  \prog_0(a_q^{(\ell)}),
  \qquad
  \gamma_0:=0.
\]
Thus $\gamma_i$ is the largest hidden coordinate index appearing in a
canonical vector response by call $i$.  It is a proof bookkeeping
quantity; the algorithm need not identify the individual hidden basis
vectors or observe $\gamma_i$.
For $i\in\{0,\ldots,n\}$ and $j\in\{0,\ldots,T\}$, define
\begin{equation}\label{lb:eq:Uji-sigma-field}
  \mathcal U_j^{(i)}
  :=\sigma\!\left(
  \xi_0,u_1,\ldots,u_j,
  v_1,\xi_1,\ldots,v_i,\xi_i
  \right),
\end{equation}
where the list after $u_j$ is empty for $i=0$, and define
\begin{equation}\label{lb:eq:Tji-event}
  \mathcal T_j^{(i)}:=\{j>\gamma_i\}.
\end{equation}
Crucially, $\mathcal U_{j-1}^{(i-1)}$ contains only the \emph{past} latent
queries $v_1,\ldots,v_{i-1}$; it does not contain the current latent query
$v_i$.  All conditional laws below are regular conditional laws, whose
existence follows from the standard-Borel hypotheses.

\begin{lemma}[Invariant posterior under an invariant observation]
\label{lb:lem:invariant-posterior}
Let $\mathsf C,\mathsf X,\mathsf Z$ be standard Borel spaces, let
$G$ be a compact second-countable group with normalized Haar probability
$\lambda_G$, and let
\[
  \tau:\mathsf C\times G\times\mathsf X\to\mathsf X
\]
be a Borel map such that $g\mapsto\tau_{c,g}$ is a group action for every
$c$.  Let $K(c,dx)$ be a Borel probability kernel satisfying
\begin{equation}\label{lb:eq:prior-kernel-invariance}
  K(c,\tau_{c,g}^{-1}A)=K(c,A)
  \qquad(c\in\mathsf C,\ g\in G)
\end{equation}
for every Borel $A\subseteq\mathsf X$.  Suppose that the Borel observation
$H:\mathsf C\times\mathsf X\to\mathsf Z$ is invariant:
\begin{equation}\label{lb:eq:observation-invariance}
  H(c,\tau_{c,g}x)=H(c,x).
\end{equation}
Then there is a Borel regular conditional kernel
$L(c,v,dx)$ for $x$ given $v=H(c,x)$ under $K(c,dx)$ such that
\begin{equation}\label{lb:eq:posterior-kernel-invariance}
  L(c,v,\tau_{c,g}^{-1}A)=L(c,v,A)
\end{equation}
for every $g\in G$ and for $H(c,\cdot)_\#K(c)$-almost every $v$.
\end{lemma}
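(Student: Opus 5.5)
The plan is to symmetrize an arbitrary disintegration by averaging it over the Haar measure of $G$. First, I need a Borel regular conditional kernel $L_0(c,v,dx)$ for $x$ given $v=H(c,x)$ under $K(c,dx)$, jointly measurable in $(c,v)$. For this I apply the kernel version of the disintegration theorem for standard Borel spaces (e.g.\ Kallenberg's measurable disintegration of kernels) to the kernel $c\mapsto\mathrm{Law}(H(c,x),x)$, $x\sim K(c)$, from $\mathsf C$ to $\mathsf Z\times\mathsf X$. This produces $L_0$ as a kernel from $\mathsf C\times\mathsf Z$ to $\mathsf X$ with
\[
\int_B L_0(c,v,A)\,\mu_c(dv)=K\bigl(c,A\cap H(c,\cdot)^{-1}B\bigr),\qquad \mu_c:=H(c,\cdot)_\#K(c),
\]
for all Borel $A\subseteq\mathsf X$ and $B\subseteq\mathsf Z$. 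I then define
\[
L(c,v,A):=\int_G L_0\bigl(c,v,\tau_{c,g}^{-1}A\bigr)\,\lambda_G(dg).
\]

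The first step is to check that $L$ is a Borel probability kernel. Write $L_0(c,v,\tau_{c,g}^{-1}A)=\int \ind_A(\tau(c,g,x))\,L_0(c,v,dx)$. The integrand is Borel in $(c,g,x)$, so the standard measurability of kernel integrals makes this Borel in $(c,v,g)$. Fubini then makes $L(\cdot,\cdot,A)$ Borel in $(c,v)$. Countable additivity in $A$ follows from monotone convergence, and total mass is one.

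The second step is invariance, which in fact holds for every $v$. Since $g\mapsto\tau_{c,g}$ is an action, $\tau_{c,g}^{-1}\tau_{c,h}^{-1}A=\tau_{c,hg}^{-1}A$. The substitution $g\mapsto hg$, which preserves the Haar probability on the compact group $G$, then gives $L(c,v,\tau_{c,h}^{-1}A)=L(c,v,A)$.

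The third step is to show that $L$ is still a version of the conditional law. Fix $g$ and set $L_g(c,v,A):=L_0(c,v,\tau_{c,g}^{-1}A)$. I claim that $L_g$ is itself a disintegration. By \eqref{lb:eq:prior-kernel-invariance}, $\tau_{c,g}x$ has law $K(c)$ when $x$ does, and by \eqref{lb:eq:observation-invariance}, $H(c,\tau_{c,g}x)=H(c,x)$. Hence the pair $(H(c,x),\tau_{c,g}x)$ has the same joint law as $(H(c,x),x)$, and its conditional law given the first coordinate is $L_g$. Concretely,
\[
\int_B L_g(c,v,A)\,\mu_c(dv)=K\bigl(c,\{x:H(c,x)\in B,\ \tau_{c,g}x\in A\}\bigr)=K\bigl(c,A\cap H(c,\cdot)^{-1}B\bigr),
\]
where the last equality uses invariance of $K$ together with invariance of $H$. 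Integrating this identity over $g$ and applying Fubini shows that $L$ satisfies the same defining identity, so $L$ is a regular conditional kernel. The almost-every-$v$ qualifier in \eqref{lb:eq:posterior-kernel-invariance} is then automatic, since invariance holds everywhere.

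I expect the main obstacle to be the measurability bookkeeping rather than the symmetrization. The key points are obtaining a disintegration that is jointly Borel in $(c,v)$, and verifying joint Borel measurability of $(c,v,g)\mapsto L_0(c,v,\tau_{c,g}^{-1}A)$ so that Fubini applies. I will handle both by citing the standard-Borel kernel disintegration theorem and the monotone-class argument for integrals of jointly measurable functions against kernels. Second countability of $G$ ensures that its Borel $\sigma$-field is the natural one, so these product-measurability arguments go through.
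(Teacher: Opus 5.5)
Your proposal is correct and follows essentially the same route as the paper. Both arguments take an arbitrary jointly Borel disintegration $L_0$ and average $L_0(c,v,\tau_{c,g}^{-1}A)$ over Haar measure. Invariance then comes from Haar invariance, and the disintegration identity comes from rewriting $\tau_{c,g}^{-1}A\cap H(c,\cdot)^{-1}B$ as $\tau_{c,g}^{-1}\bigl(A\cap H(c,\cdot)^{-1}B\bigr)$ using invariance of $H$, then applying invariance of $K$. The only differences are cosmetic: you check the identity for each fixed $g$ before integrating, whereas the paper integrates first, and your measurability bookkeeping supplies details the paper only asserts.
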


\begin{proof}
Let $L_0(c,v,dx)$ be any Borel disintegration of $K(c,dx)$ over the map
$H(c,\cdot)$, and define
\begin{equation}\label{lb:eq:averaged-posterior}
  L(c,v,A)
  :=\int_G L_0(c,v,\tau_{c,g}^{-1}A)\,\lambda_G(dg).
\end{equation}
This is a Borel kernel.  If
$M(c,dv):=H(c,\cdot)_\#K(c)$, then for Borel
$A\subseteq\mathsf X$ and $B\subseteq\mathsf Z$,
\begin{align*}
\int_B L(c,v,A)M(c,dv)
&=\int_G K\bigl(c,\tau_{c,g}^{-1}A\cap H(c,\cdot)^{-1}B\bigr)
  \,\lambda_G(dg)\\
&=\int_G K\bigl(c,\tau_{c,g}^{-1}
  (A\cap H(c,\cdot)^{-1}B)\bigr)\,\lambda_G(dg)\\
&=K\bigl(c,A\cap H(c,\cdot)^{-1}B\bigr),
\end{align*}
where the second line uses \eqref{lb:eq:observation-invariance} and the third
uses \eqref{lb:eq:prior-kernel-invariance}.  Hence $L$ is a regular
conditional kernel.  Equation \eqref{lb:eq:posterior-kernel-invariance}
follows directly from \eqref{lb:eq:averaged-posterior} and left/right
invariance of Haar probability on the compact group $G$.
\end{proof}

\begin{lemma}[Sequential posterior-stabilizer invariance]
\label{lb:lem:sequential-posterior}
Let $U=[u_1,\ldots,u_T]$ have the uniform invariant Stiefel distribution on
$\Ortho(d,T)$, and let $\vartheta$
be an independent random element of a standard Borel space.  Fix
$j\in[T]$ and an integer $r\ge0$.  Suppose that Borel maps $\chi_q$ define
recursively
\begin{equation}\label{lb:eq:sequential-query-recursion}
  x_q=\chi_q(\vartheta,u_1,\ldots,u_{j-1},v_1,\ldots,v_{q-1}),
  \qquad
  v_q=U^\top x_q,
  \qquad q\in[r].
\end{equation}
Let
\[
  C_q:=(\vartheta,u_1,\ldots,u_{j-1},v_1,\ldots,v_q),
\]
with $C_0=(\vartheta,u_1,\ldots,u_{j-1})$, and set
\[
  \mathsf S_q
  :=\operatorname{span}\{u_1,\ldots,u_{j-1},x_1,\ldots,x_q\}.
\]
Regular conditional kernels $K_q(C_q,dU)$ of $U$ given $C_q$ may be
chosen so that, for almost every $C_q=c_q$, the kernel is invariant under
every orthogonal map fixing $\mathsf S_q(c_q)$ pointwise:
\begin{equation}\label{lb:eq:sequential-posterior-invariance}
  K_q(c_q,W^{-1}A)=K_q(c_q,A)
  \quad
  \text{whenever }Ws=s\text{ for all }s\in\mathsf S_q(c_q).
\end{equation}
\end{lemma}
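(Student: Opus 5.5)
We argue by induction on $q\in\{0,\ldots,r\}$, at each step applying Lemma~\ref{lb:lem:invariant-posterior} with the observation being the newly revealed latent query $v_q=U^\top x_q$. For the base case $q=0$, take $C_0=(\vartheta,u_1,\ldots,u_{j-1})$. Since $\vartheta$ is independent of $U$, the conditional law of $U$ given $C_0$ is the conditional Stiefel law given its first $j-1$ columns. Left multiplication by any orthogonal $W$ with $Wu_r=u_r$ for $r<j$ preserves the Haar law of $U$ and fixes the conditioning variables. A standard disintegration argument, or an explicit construction completing $(u_1,\ldots,u_{j-1})$ by a Haar frame of their orthogonal complement, then gives a kernel $K_0$ invariant under the stabilizer of $\mathsf S_0$.

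For the inductive step $q-1\to q$, we apply Lemma~\ref{lb:lem:invariant-posterior} with the following data:
\begin{itemize}
\item $\mathsf C$ is the space of $c_{q-1}$, $\mathsf X=\Ortho(d,T)$, and $K=K_{q-1}$;
\item $G$ is the orthogonal group $O(d)$ acting through the conjugated stabilizer: given $c$, fix a Borel choice of orthonormal basis of $\mathsf S_{q-1}(c)^\perp$ (for instance by Gram--Schmidt on $u_1,\ldots,u_{j-1},x_1,\ldots,x_{q-1}$ and then on the standard basis), identify $\mathsf S_{q-1}(c)^\perp$ with $\R^{d-\dim}$, and let $G_c\cong O(d-\dim\mathsf S_{q-1}(c))$.
\end{itemize}
Since $\dim$ varies with $c$, we either stratify $\mathsf C$ into the finitely many Borel pieces of constant dimension or embed each $G_c$ into $O(d)$ with a Borel action $\tau_{c,g}(U)=W_{c,g}U$. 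Here $W_{c,g}$ fixes $\mathsf S_{q-1}(c)$ pointwise and is surjective onto its stabilizer, so Haar measure maps to Haar measure on the stabilizer. The induction hypothesis gives the prior invariance \eqref{lb:eq:prior-kernel-invariance}.

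The observation is $H(c,U)=U^\top x_q$ with $x_q=\chi_q(c)$, which is Borel and determined by $c$. The needed invariance \eqref{lb:eq:observation-invariance} does \emph{not} hold for all of $G_c$: $(WU)^\top x_q=U^\top W^\top x_q$ equals $U^\top x_q$ only when $W^\top x_q=x_q$. So the group must be cut down to the stabilizer of $\mathsf S_q(c)=\mathsf S_{q-1}(c)+\operatorname{span}\{x_q\}$. This is harmless, because $K_{q-1}$ is a fortiori invariant under this smaller compact group. We therefore apply Lemma~\ref{lb:lem:invariant-posterior} directly with $G_c$ the stabilizer of $\mathsf S_q(c)$, again realized Borel-measurably as $O(d-\dim\mathsf S_q(c))$ via a Borel orthonormal frame of $\mathsf S_q(c)^\perp$. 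The lemma yields a kernel $L(c_{q-1},v_q,dU)$ invariant under that stabilizer for a.e.\ $v_q$. Setting $K_q((c_{q-1},v_q),\cdot):=L(c_{q-1},v_q,\cdot)$ gives a regular conditional law of $U$ given $C_q$ by the tower property of disintegrations. Invariance holds for almost every $c_q$ because it holds for $M$-a.e.\ $v_q$ and for a.e.\ $c_{q-1}$.

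The main obstacle is measurability bookkeeping rather than any inequality. We must make the action $\tau_{c,g}$ jointly Borel and a genuine group action for each $c$ while the stabilized subspace varies with $c$, and we must handle null sets across the $r$ inductive steps. The stratification by $\dim\mathsf S_q$ together with a Borel Gram--Schmidt frame is the first thing we would try. The a.e.\ exceptional sets are finite unions over $q\le r$, so they remain null.
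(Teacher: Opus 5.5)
Your proposal is correct and follows the paper's proof: $K_0$ comes from the explicit invariant completion of $(u_1,\ldots,u_{j-1})$ by a Haar frame of the orthogonal complement. Each inductive step then disintegrates $K_{q-1}$ over $v_q=U^\top x_q$ via Lemma~\ref{lb:lem:invariant-posterior}, using the pointwise stabilizer of $\mathsf S_q$ (not $\mathsf S_{q-1}$), realized as $O(d-s)$ through a Borel frame of $\mathsf S_q^\perp$ on each rank stratum $\{\dim\mathsf S_q=s\}$.

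The paper also spells out the null-set bookkeeping you flagged. Because that lemma needs prior invariance for every $c$, it first restricts $c_{q-1}$ to a full-measure set where $K_{q-1}$ is invariant, and afterwards extends the new kernel arbitrarily on the null complement. Your fallback of embedding the varying stabilizers into $O(d)$ is unnecessary, since the lemma wants one fixed compact group acting on each stratum, so the stratification you settle on is the right choice.
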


\begin{proof}
We construct the kernels inductively.  Conditional on
$u_1,\ldots,u_{j-1}$, the remaining columns of a uniform Stiefel frame have
the standard invariant-completion law.  Explicitly, after choosing a Borel
orthonormal frame $Q_\perp(u_{<j})$ for
$\operatorname{span}(u_1,\ldots,u_{j-1})^\perp$, this law is the image of
the uniform invariant probability measure on the Stiefel manifold
$\Ortho(d-j+1,T-j+1)$ under
\[
  V\longmapsto [u_1,\ldots,u_{j-1},Q_\perp(u_{<j})V].
\]
It is invariant under every orthogonal map fixing
$\mathsf S_0=\operatorname{span}\{u_1,\ldots,u_{j-1}\}$ pointwise.
Independence of $\vartheta$ gives the required kernel $K_0$.

Assume that $K_{q-1}$ has been chosen.  Let
$\mathsf C_{q-1}^{\rm good}$ be a Borel full-measure subset of the
conditioning space on which the induction hypothesis holds.  We first
restrict the conditioning parameter and the kernel to this good set; only
after constructing the next posterior there will we extend it arbitrarily
over the null complement.  Once $C_{q-1}=c_{q-1}\in
\mathsf C_{q-1}^{\rm good}$ is fixed, the vector $x_q$ in
\eqref{lb:eq:sequential-query-recursion} is fixed.  Put
\[
  \mathsf S_q
  =\operatorname{span}(\mathsf S_{q-1},x_q).
\]
On each stratum on which $\dim\mathsf S_q=s$, choose a Borel orthogonal
completion $Q_\perp(c_{q-1})$ of $\mathsf S_q$.  For
$R\in O(d-s)$ define
\[
  W_{c_{q-1},R}
  :=P_{\mathsf S_q}
    +Q_\perp(c_{q-1})R Q_\perp(c_{q-1})^\top.
\]
This is a Borel action of the fixed compact group $O(d-s)$ on the
conditional Stiefel fiber.  Because it fixes $\mathsf S_q$ pointwise, it
belongs to the stabilizer from the induction hypothesis.  Moreover, the
new observation
\[
  H_q(c_{q-1},U):=U^\top x_q
\]
is invariant under this action:
\[
  (W_{c_{q-1},R}U)^\top x_q
  =U^\top W_{c_{q-1},R}^\top x_q
  =U^\top x_q.
\]
Apply Lemma~\ref{lb:lem:invariant-posterior}, separately on each Borel rank
stratum of $\mathsf C_{q-1}^{\rm good}$, to disintegrate $K_{q-1}$ over
$H_q$.  The finitely many resulting kernels paste into a Borel kernel on the
good set; extend that kernel arbitrarily over the null complement.  The
resulting $K_q$ is a regular conditional law given $C_q$ and, for almost
every $C_q$, is invariant under the full pointwise stabilizer of
$\mathsf S_q$.  This closes the induction.
Every orthogonal transformation used in this argument is fixed only
after $C_{q-1}$ has been conditioned upon; no state-dependent global
change of variables is invoked.
\end{proof}

\begin{lemma}[Conditional residual spherical law; cf. Lemma 14 of
\cite{arjevani2023lowerbounds}]
\label{lb:lem:conditional-haar}
Fix $i\in[n]$ and $j\in[T]$.  On the event
$\mathcal T_j^{(i-1)}$, the past transcript and the current query
$(x_i,y_i)$ are measurable from $\mathcal U_{j-1}^{(i-1)}$.  In particular,
$\mathcal T_j^{(i-1)}$, the space
\[
  \mathsf S
  :=\operatorname{span}\{u_1,\ldots,u_{j-1},x_1,\ldots,x_{i-1}\},
\]
and its orthogonal projector are measurable under this conditioning.
There is an $\mathcal U_{j-1}^{(i-1)}$-measurable radius
$r_{i,j}\in[0,1]$ such that, outside a null event and on
$\mathcal T_j^{(i-1)}$,
\begin{equation}\label{lb:eq:conditional-haar-law}
\mathcal L\!\left(
P_{\mathsf S^\perp}u_j
\,\middle|\,
\mathcal U_{j-1}^{(i-1)}
\right)
=
\begin{cases}
\delta_0,&r_{i,j}=0,\\[1mm]
\operatorname{Unif}\!\left(
\{v\in\mathsf S^\perp:\norm v=r_{i,j}\}
\right),&r_{i,j}>0.
\end{cases}
\end{equation}
Moreover,
$r_{i,j}=\norm{P_{\mathsf S^\perp}u_j}$ almost surely.  Thus normalization
is used only on $\{r_{i,j}>0\}$.
\end{lemma}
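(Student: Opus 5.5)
The plan is to combine two earlier results: the measurability argument from the zero-chain property (Corollary~\ref{lb:cor:two-channel-zero-chain}, or Definition~\ref{lb:def:finite-chain} in general), and the stabilizer invariance of Lemma~\ref{lb:lem:sequential-posterior}.

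\emph{Step 1: measurability.} Work on $\mathcal T_j^{(i-1)}=\{j>\gamma_{i-1}\}$. For each $q\le i-1$, every vector response $Ua_q^{(\ell)}$ has $\prog_0(a_q^{(\ell)})\le\gamma_{i-1}\le j-1$. Hence $Ua_q^{(\ell)}=\sum_{r\le j-1}(a_q^{(\ell)})_r u_r$, where $a_q^{(\ell)}=a^{(\ell)}(v_q,y_q,\xi_q)$. The scalar channels $s(v_q,y_q,\xi_q)$ depend on $U$ only through $v_q$. By induction on $q$, the transcript up to call $q$ is then a Borel function of $(\xi_0,u_{<j},v_{<q},\xi_{<q})$. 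The next query $(x_q,y_q)$ is a function of the seed and this transcript, so the same holds for it. This gives the claimed $\mathcal U_{j-1}^{(i-1)}$-measurability of the past transcript and of $(x_i,y_i)$. It also gives measurability of $\mathsf S$, of its projector, and of the event itself, since $\gamma_{i-1}$ is computed from $v_{<i},\xi_{<i}$.

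To make this rigorous, I define ``surrogate'' queries $\tilde x_q$ by running the algorithm on responses in which $Ua$ is replaced by $\sum_{r\le j-1}a_r u_r$ (with $a$ truncated). These surrogates match the true queries on $\mathcal T_j^{(i-1)}$. They also fit the recursion \eqref{lb:eq:sequential-query-recursion}, with $\vartheta=(\xi_0,\xi_1,\ldots)$, which is independent of $U$.

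\emph{Step 2: posterior invariance.} Apply Lemma~\ref{lb:lem:sequential-posterior} with $r=i-1$ to the surrogate recursion. Conditional on $C_{i-1}$, which generates $\mathcal U_{j-1}^{(i-1)}$ together with the independent $\xi$'s, the law of $U$ is invariant under every orthogonal $W$ fixing $\mathsf S$ pointwise. The columns $u_1,\ldots,u_{j-1}$ and $P_{\mathsf S}u_j$ are fixed by such $W$. Given the conditioning, the first is measurable, and $P_{\mathsf S}u_j$ is measurable because $\langle s,u_j\rangle$ is known for $s\in\mathsf S$ (for $s=x_q$ it is $(v_q)_j$, and for $s=u_r$ it is $0$). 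Therefore $r_{i,j}:=(1-\|P_{\mathsf S}u_j\|^2)^{1/2}=\|P_{\mathsf S^\perp}u_j\|$ is conditionally measurable and lies in $[0,1]$.

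The residual $P_{\mathsf S^\perp}u_j$ has a conditional law that is invariant under the full orthogonal group of $\mathsf S^\perp$, because $WP_{\mathsf S^\perp}u_j=P_{\mathsf S^\perp}Wu_j$. This law is supported on the sphere of radius $r_{i,j}$. The only such invariant law is uniform measure on that sphere when $r_{i,j}>0$ and $\delta_0$ when $r_{i,j}=0$. Note that $\dim\mathsf S^\perp\ge1$ whenever $r_{i,j}>0$. Conditioning additionally on the independent future seeds does not change this.

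\emph{Main obstacle.} I expect the delicate point to be Step 1: the true query sequence is not a function of $(u_{<j},v_{<q})$ off the event, so the surrogate construction is needed. The argument must check that on $\mathcal T_j^{(i-1)}$ the true and surrogate transcripts coincide, which requires an induction using $\gamma_q\le\gamma_{i-1}$. After that, the invariance conclusion transfers almost surely on the event.
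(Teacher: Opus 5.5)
Your proposal is correct and follows the paper's proof essentially step for step. It uses the truncated (``quiet'') surrogate simulation with a first-disagreement induction for measurability, then Lemma~\ref{lb:lem:sequential-posterior} applied to that recursion with $r=i-1$, and finally identifies the orbit from the known inner products $\langle u_j,x_q\rangle=[v_q]_j$ and $\langle u_j,u_r\rangle=0$. When transferring the kernel from the surrogate tuple to $\mathcal U_{j-1}^{(i-1)}$, you should state the two-sided identity $\{j>\gamma_{i-1}\}=\{j>\bar\gamma_{i-1}\}$, with $\bar\gamma_{i-1}$ the progress along the surrogate trajectory; agreement of the trajectories on $\mathcal T_j^{(i-1)}$ alone does not make the indicator a function of the surrogate tuple, and the reverse inclusion follows from the same induction with roles swapped, as the paper notes.
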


\begin{proof}
We first construct globally Borel quiet-simulated queries.  For fixed
$u_{<j}:=(u_1,\ldots,u_{j-1})$, define the truncated canonical response
\begin{align}
\overline{\mathcal O}_{<j}(v,y;\xi,u_{<j})
:=\biggl(&s(v,y,\xi),
\sum_{r<j}[a^{(1)}(v,y,\xi)]_r u_r,\ldots,
\sum_{r<j}[a^{(m)}(v,y,\xi)]_r u_r\biggr).
\label{lb:eq:quiet-truncated-oracle}
\end{align}
Feed these responses recursively to the algorithm.  Since all algorithm
and oracle maps are Borel, there are Borel maps $\chi_q$ such that, for
$\vartheta:=(\xi_0,\xi_1,\ldots,\xi_{i-1})$,
\begin{equation}\label{lb:eq:quiet-query-maps}
  (\bar x_q,\bar y_q)
  =\chi_q(\vartheta,u_{<j},\bar v_1,\ldots,\bar v_{q-1}),
  \qquad
  \bar v_q:=U^\top\bar x_q,
  \qquad q\le i.
\end{equation}
For $q<i$, let $\bar\gamma_q$ be the progress computed from the
corresponding latent coefficients
$a^{(\ell)}(\bar v_q,\bar y_q,\xi_q)$.

A first-disagreement induction gives
\begin{equation}\label{lb:eq:quiet-actual-identification}
  \{j>\gamma_{i-1}\}=\{j>\bar\gamma_{i-1}\},
  \qquad
  (x_q,y_q,v_q)=(\bar x_q,\bar y_q,\bar v_q)
  \quad(q\le i)
\end{equation}
on this common event.  For the first implication, suppose
$j>\gamma_{i-1}$.  Then every actual latent coefficient before time $i$
is supported in $\{1,\ldots,j-1\}$.  Starting from the common first query,
the actual ambient response therefore equals
\eqref{lb:eq:quiet-truncated-oracle}; induction gives equality of the two
trajectories and hence $\bar\gamma_{i-1}=\gamma_{i-1}<j$.  The reverse
implication is identical with the roles of the actual and quiet trajectories
interchanged.

For completeness, measurability from the actual conditioning tuple can be
checked without referring to the random quiet trajectory.  Given
$(\xi_0,u_{<j},v_1,\xi_1,\ldots,v_{i-1},\xi_{i-1})$, recursively compute the
query, the non-ambient channel, and the latent coefficients at each past
round.  If a coefficient first has support reaching $j$, declare
$\mathcal T_j^{(i-1)}$ false; while all coefficients remain supported below
$j$, every ambient response is computable from $u_{<j}$ and the recursion
continues.  The declaration is correct by induction and, on the true event,
also reconstructs $(x_i,y_i)$.  Hence $\mathcal T_j^{(i-1)}$ and the current
query are $\mathcal U_{j-1}^{(i-1)}$-measurable.

Apply Lemma~\ref{lb:lem:sequential-posterior} to the quiet recursion
\eqref{lb:eq:quiet-query-maps}, with $r=i-1$.  It yields a regular
conditional kernel of $U$ given
\[
  (\vartheta,u_1,\ldots,u_{j-1},\bar v_1,\ldots,\bar v_{i-1}).
\]
Transport this kernel through the fixed Borel coordinate reordering and
write the conditioning tuple, in exactly the order used in
\eqref{lb:eq:Uji-sigma-field}, as
\[
  \bar C
  :=(\xi_0,u_1,\ldots,u_{j-1},
      \bar v_1,\xi_1,\ldots,\bar v_{i-1},\xi_{i-1}).
\]
Denote the transported kernel by $\bar K(\bar C,dU)$.  It is invariant
under every orthogonal map fixing
\[
  \bar{\mathsf S}
  :=\operatorname{span}\{u_1,\ldots,u_{j-1},
  \bar x_1,\ldots,\bar x_{i-1}\}
\]
pointwise.  Let
\[
  C:=(\xi_0,u_1,\ldots,u_{j-1},
      v_1,\xi_1,\ldots,v_{i-1},\xi_{i-1}),
\]
the tuple generating $\mathcal U_{j-1}^{(i-1)}$.  By
\eqref{lb:eq:quiet-actual-identification}, on $\mathcal T_j^{(i-1)}$ we now
have the literal equality $\bar C=C$, and
$\bar{\mathsf S}=\mathsf S$.  Write
$\bar K\phi(c):=\int\phi(U)\bar K(c,dU)$.  Because the events and the
conditioning tuples agree in \eqref{lb:eq:quiet-actual-identification}, for
every bounded Borel $h$ and $\phi$,
\begin{align*}
\E\!\left[
  \ind_{\mathcal T_j^{(i-1)}}h(C)\phi(U)
\right]
&=\E\!\left[
  \ind_{\{j>\bar\gamma_{i-1}\}}h(\bar C)\phi(U)
\right]\\
&=\E\!\left[
  \ind_{\{j>\bar\gamma_{i-1}\}}h(\bar C)\bar K\phi(\bar C)
\right]\\
&=\E\!\left[
  \ind_{\mathcal T_j^{(i-1)}}h(C)\bar K\phi(C)
\right].
\end{align*}
Thus $\bar K(C,\cdot)$ is a version of the actual conditional law on
$\mathcal T_j^{(i-1)}$.  We have therefore obtained, by a sequential
posterior-kernel induction, the conditional invariance of $U$ under the
pointwise stabilizer of $\mathsf S$.  In particular, the conditional law of
$u_j$ is invariant under that stabilizer.

It remains to identify its orbit.  The conditioning fixes all inner
products of $u_j$ with the spanning vectors of $\mathsf S$: they are zero
against $u_1,\ldots,u_{j-1}$, and
\[
  \ip{u_j}{x_q}=[v_q]_j,
  \qquad q<i.
\]
These values determine $b:=P_{\mathsf S}u_j$ uniquely, even when the
spanning vectors are linearly dependent.  Since $\norm{u_j}=1$, they also
determine the Borel radius
\[
  r_{i,j}:=\sqrt{1-\norm{b}^2}
  =\norm{P_{\mathsf S^\perp}u_j}.
\]
The residual conditional law is supported on the radius-$r_{i,j}$ sphere
in $\mathsf S^\perp$ and is invariant under its full orthogonal group.
For $r_{i,j}>0$, uniqueness of rotation-invariant spherical probability gives the
second case of \eqref{lb:eq:conditional-haar-law}; for $r_{i,j}=0$, the
sphere is the singleton $\{0\}$ and the law is $\delta_0$.
\end{proof}

\begin{lemma}[One-pair guessing estimate; cf. Lemma 13 of
\cite{arjevani2023lowerbounds}]
\label{lb:lem:one-pair-guessing}
Use the projections and events $\mathcal G_j^{(i)}$ from
Lemma~\ref{lb:lem:projection-accumulation}, with the random query sequence
$x_1,\ldots,x_n$.  If $d>n+T$, then, for every $i\in[n]$ and $j\in[T]$,
\begin{equation}\label{lb:eq:one-pair-guessing}
  \PP\left((\mathcal G_j^{(i)})^c,
  \mathcal T_j^{(i-1)}\right)
  \le
  2\exp\!\left(-\frac{d-i-j}{32R^2n}\right)
  \le
  2\exp\!\left(-\frac{d-n-T}{32R^2n}\right).
\end{equation}
\end{lemma}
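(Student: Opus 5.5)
Work on the event $\mathcal T_j^{(i-1)}$ and condition on $\mathcal U_{j-1}^{(i-1)}$. By Lemma~\ref{lb:lem:conditional-haar}, on this event the space $\mathsf S=\mathsf S_{j-1}^{(i-1)}$ and the current query $x_i$ are measurable with respect to the conditioning. The residual $P_{j-1}^{(i-1)}u_j=P_{\mathsf S^\perp}u_j$ is either $0$ or uniform on the sphere of radius $r_{i,j}\le1$ in $\mathsf S^\perp$. The quantity in \eqref{lb:eq:Gij} is $\|\Pi_{j-1}^{(i)}P_{\mathsf S^\perp}u_j\|$, and $\mathsf S_{j-1}^{(i)}=\mathsf S+\operatorname{span}\{x_i\}$. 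Hence $\Pi_{j-1}^{(i)}$ restricted to $\mathsf S^\perp$ is the rank-at-most-one projection onto the line spanned by $w:=P_{\mathsf S^\perp}x_i$. If $w=0$ or $r_{i,j}=0$, the quantity is $0$ and $\mathcal G_j^{(i)}$ holds. Otherwise, with $e:=w/\|w\|$ (conditionally fixed), the quantity equals $|\ip{e}{P_{\mathsf S^\perp}u_j}|=r_{i,j}|\ip{e}{\theta}|$, where $\theta$ is uniform on the unit sphere of $\mathsf S^\perp$.

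Next I apply spherical concentration. Let $k:=\dim\mathsf S^\perp\ge d-(j-1)-(i-1)\ge d-i-j$. For a fixed unit vector $e$ and $\theta$ uniform on $S^{k-1}$, the standard cap bound gives
\[
\PP(|\ip e\theta|\ge t)\le 2\exp(-kt^2/2).
\]
Since $r_{i,j}\le1$, I take $t=1/(4R\sqrt n)$. This yields the conditional bound $2\exp(-k/(32R^2n))\le 2\exp(-(d-i-j)/(32R^2n))$.

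To conclude, multiply by $\ind_{\mathcal T_j^{(i-1)}}$, which is measurable with respect to the conditioning, and take expectations. This gives the first inequality in \eqref{lb:eq:one-pair-guessing}. The second inequality follows from $i\le n$, $j\le T$, and monotonicity of the exponential. The assumption $d>n+T$ guarantees $k\ge1$, so the sphere is nondegenerate.

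The main obstacle is justifying that $x_i$, and therefore $e$, is fixed under the conditioning while $u_j$'s residual remains spherically uniform. This is supplied by Lemma~\ref{lb:lem:conditional-haar} exactly on $\mathcal T_j^{(i-1)}$, so care is needed only to keep all statements restricted to that event. A secondary point is the constant in the cap bound. I would use the classical bound $\PP(\ip e\theta\ge t)\le e^{-kt^2/2}$, valid for all $k\ge1$, together with a union bound over both signs.
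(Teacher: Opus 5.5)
Your proposal is correct and follows essentially the same route as the paper. The paper also works on $\mathcal T_j^{(i-1)}$, conditions on $\mathcal U_{j-1}^{(i-1)}$, uses Lemma~\ref{lb:lem:conditional-haar} to get a spherically uniform residual of radius $r_{i,j}\le1$ in $\mathsf S^\perp$, notes that $\Pi_{j-1}^{(i)}$ restricted to $\mathsf S^\perp$ has rank at most one, and applies the spherical cap bound with threshold $1/(4R\sqrt n)$ before averaging over the conditioning. Your explicit identification of the direction $P_{\mathsf S^\perp}x_i$ and your separate handling of the degenerate cases $w=0$ and $r_{i,j}=0$ only spell out what the paper leaves implicit.
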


\begin{proof}
Work on $\mathcal T_j^{(i-1)}$ and condition on
$\mathcal U_{j-1}^{(i-1)}$.  Put
\[
  \mathsf S
  =\operatorname{span}\{u_1,\ldots,u_{j-1},x_1,\ldots,x_{i-1}\},
  \qquad
  P=P_{\mathsf S^\perp}=P_{j-1}^{(i-1)},
\]
and let $r=r_{i,j}$ be the conditional radius from
Lemma~\ref{lb:lem:conditional-haar}.  If $r=0$, then $Pu_j=0$ and hence
\[
  \Pi_{j-1}^{(i)}P_{j-1}^{(i-1)}u_j=0;
\]
the conditional probability of the bad event is therefore zero.

Suppose now that $r>0$ and define
$\widehat u_j:=Pu_j/r$.  By Lemma~\ref{lb:lem:conditional-haar},
$\widehat u_j$ is uniform on the unit sphere of $\mathsf S^\perp$.  The
current query $x_i$ is measurable under the conditioning, and the restriction
of $\Pi_{j-1}^{(i)}$ to $\mathsf S^\perp$ has rank at most one: it is the
projection onto the residual direction of $x_i$ modulo $\mathsf S$.  Since
$r\le1$,
\begin{align*}
&\PP\left(
\norm{\Pi_{j-1}^{(i)}P_{j-1}^{(i-1)}u_j}
\ge\frac{1}{4R\sqrt n}
\ \middle|\ 
\mathcal U_{j-1}^{(i-1)}
\right)\\
&\qquad\le
\PP\left(
\norm{\Pi_{j-1}^{(i)}\widehat u_j}
\ge\frac{1}{4R\sqrt n}
\ \middle|\ 
\mathcal U_{j-1}^{(i-1)}
\right).
\end{align*}
Let $d':=\dim(\mathsf S^\perp)$.  Since
$\dim\mathsf S\le(i-1)+(j-1)$, we may use the conservative lower bound
$d'\ge d-i-j$.  For a uniform vector $v$ on $\mathbb S^{d'-1}$, the standard
spherical-coordinate estimate
\[
  \PP(v_1^2\ge\alpha)\le2e^{-\alpha d'/2}
\]
gives the first inequality in \eqref{lb:eq:one-pair-guessing} with
$\alpha=(16R^2n)^{-1}$.  Averaging over the conditioning, including the
$r=0$ fibers, proves the claim.
\end{proof}

\begin{lemma}[Finite-channel random rotation]
\label{lb:lem:finite-random-rotation}
Let $U$ have the uniform invariant Stiefel distribution on $\Ortho(d,T)$,
independently of i.i.d. seeds
$\xi_1,\xi_2,\ldots$ and the private randomness of an adaptive randomized
algorithm.  Suppose the algorithm makes queries $(x_t,y_t)$ satisfying
$\norm{x_t}\le R$ almost surely and observes the canonical responses
\eqref{lb:eq:canonical-rotated-oracle}.  Put
\[
  \pi_N:=\max_{1\le t\le N}\prog_{1/4}(U^\top x_t).
\]
If the latent kernel is a uniform finite-channel probability-$\varpi$ zero-chain
and $d>N+T$, then
\begin{equation}\label{lb:eq:finite-rotation-bound}
  \PP(\pi_N\ge T)
  \le e^{2\varpi N-T}
  +2NT\exp\!\left(-\frac{d-N-T}{32R^2N}\right).
\end{equation}
The bound does not depend on the finite number $m$ of vector channels.
\end{lemma}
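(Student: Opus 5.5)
We bound the bad event $\{\pi_N\ge T\}$ by splitting it into two parts. On the first, the latent progress $\gamma_N$ revealed through the vector channels reaches $T$. On the second, some query $x_i$ attains $\prog_{1/4}(U^\top x_i)\ge j$ for an index $j$ that had not yet been revealed, i.e. $j>\gamma_{i-1}$. The first part gives the term $e^{2\varpi N-T}$ and the second gives the union-bounded geometric term.

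\emph{Step 1 (guessing part).} Let $\mathcal B$ be the event that all events $\mathcal G_j^{(q)}$ hold, for every $q\in[N]$ and $j\in[T]$, whenever the corresponding $\mathcal T_j^{(q-1)}$ holds. More precisely, the complement of $\mathcal B$ is the union of the sets $(\mathcal G_j^{(i)})^c\cap\mathcal T_j^{(i-1)}$. Lemma~\ref{lb:lem:one-pair-guessing} with $n=N$ and a union bound over the $NT$ pairs give
\[
\PP(\mathcal B^c)\le 2NT\exp\!\left(-\frac{d-N-T}{32R^2N}\right).
\]
On $\mathcal B$, fix a query $i$ and an index $j>\gamma_{i-1}$. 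Then $\gamma_{q-1}\le\gamma_{i-1}<j$ for all $q\le i$, so the events $\mathcal T_j^{(q-1)}$ hold for all $q\le i$. Hence $\bigcap_{q\le i}\mathcal G_j^{(q)}$ holds, and Lemma~\ref{lb:lem:projection-accumulation} gives $|[v_i]_j|\le1/4$. This uses $\|x_i\|\le R$. Consequently, on $\mathcal B$ we have $\prog_{1/4}(v_i)\le\gamma_{i-1}$ for every $i$.

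\emph{Step 2 (progress part).} On $\mathcal B$, the one-step property \eqref{lb:eq:finite-chain-one-step} together with Step 1 gives $\gamma_i\le\max\{\gamma_{i-1},\prog_{1/4}(v_i)+1\}\le\gamma_{i-1}+1$. Moreover, by \eqref{lb:eq:finite-chain-quiet}, $\gamma_i=\gamma_{i-1}$ unless $\xi_i\in E$. Thus $\gamma_N$ is at most the number of indices $i\le N$ with $\xi_i\in E$. Since $\xi_i$ is a fresh i.i.d. seed, independent of $v_i$ and of the past, $\ind\{\xi_i\in E\}$ is Bernoulli with parameter at most $\varpi$. A Chernoff bound for the binomial count then gives
\[
\PP(\gamma_N\ge T)\le e^{-T}\,\E e^{\sum_i\ind\{\xi_i\in E\}}\le e^{-T}(1+(e-1)\varpi)^N\le e^{2\varpi N-T}.
\]
This count also bounds $\pi_N$. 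Indeed, on $\mathcal B$ we have $\pi_N\le\max_i\gamma_{i-1}\le\gamma_N$, so $\{\pi_N\ge T\}\cap\mathcal B\subseteq\{\text{count}\ge T\}$. This bound has no dependence on $m$, since only the maximal progress over channels enters.

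\emph{Main obstacle.} The delicate point is making Step 1 rigorous under adaptivity. The events $\mathcal G_j^{(i)}$ must be controlled only on $\mathcal T_j^{(i-1)}$, with conditioning that excludes the current latent query. Lemmas~\ref{lb:lem:conditional-haar} and~\ref{lb:lem:one-pair-guessing} handle exactly this, so the remaining care lies in the monotonicity argument of Step 1. Specifically, $\mathcal T_j^{(q-1)}\supseteq\mathcal T_j^{(i-1)}$ for $q\le i$, which lets the accumulation lemma be applied pathwise. One must also check that the Chernoff step uses only the i.i.d. structure of the $\xi_i$ and not the adaptive queries.
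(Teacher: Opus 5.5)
Your proposal is correct and follows the paper's proof. Your event $\mathcal B$ is exactly the paper's $\bigcap_{i,j}\bigl(\mathcal G_j^{(i)}\cup(\mathcal T_j^{(i-1)})^c\bigr)$, bounded by the same union bound, and you derive $\prog_{1/4}(v_i)\le\gamma_{i-1}$ from it by the same monotonicity-plus-accumulation argument. The only variation is the rare-progress step: on $\mathcal B$ you dominate $\gamma_N$ pathwise by the $\mathrm{Binomial}(N,\nu(E))$ count $\sum_i\ind\{\xi_i\in E\}$, whereas the paper uses iterated conditioning on $\mathcal H_{t-1}$ with the indicator of $\bigcap_t\mathcal V_t$. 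Your route is slightly cleaner and gives the same $e^{2\varpi N-T}$. One fix: state the Chernoff display as $\PP(\text{count}\ge T)$ rather than $\PP(\gamma_N\ge T)$, since the domination holds only on $\mathcal B$.
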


\begin{proof}
We separate rare progress from geometric guessing.

\paragraph{Rare progress.}
For $t\in[N]$, let
\[
  \mathcal V_t
  :=\{\prog_{1/4}(v_t)\le\gamma_{t-1}\}.
\]
Use the filtration
\begin{equation}\label{lb:eq:progress-filtration}
  \mathcal H_t:=\sigma(U,\xi_0,\xi_1,\ldots,\xi_t),
  \qquad
  \mathcal H_0:=\sigma(U,\xi_0).
\end{equation}
Because the algorithm is deterministic after fixing $\xi_0$, $U$, and the
past seeds, $(x_t,y_t)$, $v_t$, $\gamma_{t-1}$, and $\mathcal V_t$ are
$\mathcal H_{t-1}$-measurable.  On $\mathcal V_t$,
\eqref{lb:eq:finite-chain-one-step} gives
$\gamma_t-\gamma_{t-1}\in\{0,1\}$.  If additionally $\xi_t\notin E$, then
\eqref{lb:eq:finite-chain-quiet} gives $\gamma_t=\gamma_{t-1}$.  Equivalently,
\begin{equation}\label{lb:eq:progress-gate}
  0\le\gamma_t-\gamma_{t-1}\le\ind_{\{\xi_t\in E\}}
  \qquad\text{on }\mathcal V_t.
\end{equation}
For the Bernoulli constructions, the right side is $\zeta_t$.
Thus an open gate is necessary for progress on $\mathcal V_t$, but
need not be sufficient.  Freshness of $\xi_t$ implies
\begin{equation}\label{lb:eq:progress-one-step-rigorous}
  \PP\left(
  \gamma_t-\gamma_{t-1}=1,\mathcal V_t
  \mid\mathcal H_{t-1}
  \right)\le \varpi.
\end{equation}
Set $\iota_t:=\gamma_t-\gamma_{t-1}$.  Iterated conditioning gives
\begin{align*}
  \E\left[
  e^{\sum_{t=1}^N\iota_t}
  \ind_{\cap_{t=1}^N\mathcal V_t}
  \right]
  &\le(1-\varpi+\varpi e)^N
  \le e^{2\varpi N}.
\end{align*}
Hence
\begin{equation}\label{lb:eq:progress-chernoff}
  \PP\left(
  \gamma_N\ge T,\bigcap_{t=1}^N\mathcal V_t
  \right)
  \le e^{2\varpi N-T}.
\end{equation}
On $\cap_t\mathcal V_t$, the event $\{\pi_N\ge T\}$ implies
$\{\gamma_N\ge T\}$.

\paragraph{Geometric guessing.}
Apply Lemmas~\ref{lb:lem:projection-accumulation} and
\ref{lb:lem:one-pair-guessing} with horizon $n=N$.  Since $\gamma_t$ is
nondecreasing,
\begin{equation}\label{lb:eq:G-implies-V}
  \bigcap_{i=1}^N\bigcap_{j=1}^T
  \left(\mathcal G_j^{(i)}\cup
  (\mathcal T_j^{(i-1)})^c\right)
  \subseteq
  \bigcap_{i=1}^N\mathcal V_i.
\end{equation}
Indeed, fix $i$ and $j>\gamma_{i-1}$.  Then
$j>\gamma_{q-1}$ for every $q\le i$, so the event on the left of
\eqref{lb:eq:G-implies-V} supplies
$\cap_{q=1}^i\mathcal G_j^{(q)}$.  Lemma~\ref{lb:lem:projection-accumulation}
then gives $|\ip{u_j}{x_i}|\le1/4$.  This holds for every
$j>\gamma_{i-1}$, which is exactly $\mathcal V_i$.

A union bound and Lemma~\ref{lb:lem:one-pair-guessing} now give
\begin{equation}\label{lb:eq:no-guessing-geometric}
  \PP\left(\left[\bigcap_{i=1}^N\mathcal V_i\right]^c\right)
  \le
  2NT\exp\!\left(-\frac{d-N-T}{32R^2N}\right).
\end{equation}
Combining \eqref{lb:eq:progress-chernoff} and
\eqref{lb:eq:no-guessing-geometric} proves \eqref{lb:eq:finite-rotation-bound}.
The argument never counts the number of vector channels: on
$\mathcal T_j^{(i-1)}$, every one of them lies in
$\operatorname{span}\{u_1,\ldots,u_{j-1}\}$, and all non-ambient channels
are already determined by the conditioned latent queries and seeds.
\end{proof}

\begin{corollary}[Terminal output]
\label{lb:cor:terminal-output}
Under the assumptions of Lemma~\ref{lb:lem:finite-random-rotation},
suppose $d>N+1+T$ and, after $N$ calls, the algorithm outputs an
$\sigma(\xi_0,\mathcal T_N)$-measurable point $x_{\rm out}$ with
$\norm{x_{\rm out}}\le R$.  Then
\begin{align}
&\PP\left(
\max\left\{
\max_{t\le N}\prog_{1/4}(U^\top x_t),
\prog_{1/4}(U^\top x_{\rm out})
\right\}\ge T\right)\nonumber\\
&\qquad\le
 e^{2\varpi N-T}
 +2(N+1)T
 \exp\!\left(-\frac{d-(N+1)-T}{32R^2(N+1)}\right).
\label{lb:eq:output-rotation-bound}
\end{align}
\end{corollary}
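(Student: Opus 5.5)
The plan is to treat the output $x_{\rm out}$ as a fictitious $(N+1)$st query and then reuse the proof of Lemma~\ref{lb:lem:finite-random-rotation} with horizon $n=N+1$. Concretely, augment the algorithm by one extra call at $x_{N+1}:=x_{\rm out}$, with an arbitrary measurable lower query $y_{N+1}$ (say $y_{N+1}=0$). Its oracle seed $\xi_{N+1}$ is drawn freshly and independently. Because $x_{\rm out}$ is $\sigma(\xi_0,\mathcal T_N)$-measurable, the augmented procedure is again an adaptive randomized algorithm with $N+1$ queries, all of norm at most $R$. The hypothesis $d>N+1+T$ is exactly the dimension condition of Lemma~\ref{lb:lem:finite-random-rotation} at horizon $N+1$. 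The event on the left of \eqref{lb:eq:output-rotation-bound} coincides with $\{\pi_{N+1}\ge T\}$ for this augmented algorithm.

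Applying the lemma directly would give the geometric term with $N+1$ in place of $N$, which is what we want. It would also give $e^{2\varpi(N+1)-T}$ for rare progress, which is slightly worse than the claimed $e^{2\varpi N-T}$. To recover the sharper exponent, I will not apply the lemma as a black box but rerun its two steps.

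For the rare-progress step, note that $\{\pi_{N+1}\ge T\}\cap\bigcap_{t\le N+1}\mathcal V_t$ implies $\prog_{1/4}(v_t)\le\gamma_{t-1}$ for every $t\le N+1$. In particular it gives $\max_t\prog_{1/4}(v_t)\le\gamma_N$, so $\gamma_N\ge T$. Thus only the increments $\iota_1,\dots,\iota_N$ matter, and the Chernoff computation over $N$ steps gives $e^{2\varpi N-T}$. The response at time $N+1$ is never used, so progress through the fictitious call is irrelevant.

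For the geometric step, I will use \eqref{lb:eq:G-implies-V} with $n=N+1$, together with Lemma~\ref{lb:lem:projection-accumulation} and Lemma~\ref{lb:lem:one-pair-guessing} at horizon $N+1$. The union bound runs over $(N+1)T$ pairs and yields the second term. Lemma~\ref{lb:lem:conditional-haar} applies at $i=N+1$ because $x_{N+1}$ is a Borel function of $\xi_0$ and the past transcript, just like an ordinary query.

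The main point requiring care is this last measurability and conditional-law check for the terminal point. Specifically, $x_{\rm out}$ must be determined by $\mathcal U_{j-1}^{(N)}$ on $\mathcal T_j^{(N)}$ via the quiet-simulation argument. I expect this to go through verbatim, since that argument needs only that the query be a Borel function of the seed and past responses.
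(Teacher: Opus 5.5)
Your proposal is correct and follows essentially the paper's proof. Both treat $x_{\rm out}$ as an $(N+1)$st point in the geometric argument, using projection accumulation, the quiet-simulation conditional law and a union bound with $n=N+1$. Both keep the rare-progress term at $e^{2\varpi N-T}$, since only $\gamma_0,\ldots,\gamma_N$ enter. The one cosmetic difference is that you attach a fictitious call with a fresh seed $\xi_{N+1}$ whose response is never used, whereas the paper makes no call and freezes $\gamma_{N+1}:=\gamma_N$; the two are equivalent here.
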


\begin{proof}
Set $x_{N+1}:=x_{\rm out}$ and regard it only as an additional point in the
geometric argument.  No oracle call is made at this point: no new seed is
drawn, no response is returned, and the revealed progress is fixed at
$\gamma_{N+1}:=\gamma_N$.  Thus the rare-progress estimate remains exactly
$e^{2\varpi N-T}$.  Repeat the projection-accumulation, conditional-residual-law,
and union
bound arguments with the $N+1$ vectors
$x_1,\ldots,x_N,x_{\rm out}$.  For each $j$, on $\mathcal T_j^{(N)}$,
the quiet-simulation argument in Lemma~\ref{lb:lem:conditional-haar}
reconstructs the transcript from $\mathcal U_{j-1}^{(N)}$.
The output is therefore measurable from this field on that event.
The same conditional-law argument applies to the final point without a
new seed.  Taking $n=N+1$ in the geometric term gives
\eqref{lb:eq:output-rotation-bound}.
\end{proof}

\begin{corollary}[Soft projection and known postprocessing]
\label{lb:cor:postprocessing}
Let $\rho:\R^d\to\R^d$ be Borel with $\norm{\rho(x)}\le R$ for all $x$.
Let $(x,y,o)\mapsto\mathsf P_{x,y}(o)$ be a jointly Borel map independent
of $U$, and suppose the actual response is
\[
  \mathsf P_{x,y}\!\left(
  \widetilde{\cO}_U(\rho(x),y;\xi)
  \right).
\]
Then
Lemma~\ref{lb:lem:finite-random-rotation} and
Corollary~\ref{lb:cor:terminal-output} hold with $U^\top x$ replaced by
$U^\top\rho(x)$.
\end{corollary}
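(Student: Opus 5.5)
The plan is to reduce the claim to Lemma~\ref{lb:lem:finite-random-rotation} and Corollary~\ref{lb:cor:terminal-output}. The reduction rests on two points. First, the soft projection and the post-processing map can be absorbed into the algorithm, since neither depends on $U$. Second, the geometric argument only needs the points at which the hidden frame is actually probed, namely $\rho(x_t)$, and these satisfy the norm bound $R$.

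\textbf{Defining the simulated algorithm.} Given the original adaptive algorithm $\mathcal A$, define $\mathcal A'$ as follows. Whenever $\mathcal A$ would query $(x_t,y_t)$, $\mathcal A'$ instead queries the canonical oracle \eqref{lb:eq:canonical-rotated-oracle} at $(\tilde x_t,y_t)$, where $\tilde x_t:=\rho(x_t)$. On receiving the canonical response $o_t$, $\mathcal A'$ computes $\mathsf P_{x_t,y_t}(o_t)$ and feeds this to $\mathcal A$.

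\textbf{Checking the hypotheses of the lemma.}
\begin{itemize}
\item The map $\mathsf P$ is jointly Borel and independent of $U$, and $\rho$ is Borel. Hence $\mathcal A'$ is a Borel adaptive randomized algorithm with the same private seed $\xi_0$.
\item Its queries satisfy $\norm{\tilde x_t}\le R$ almost surely.
\item By construction, the transcript of $\mathcal A$ is a Borel function of the transcript of $\mathcal A'$ and $\xi_0$. An induction on $t$ then shows that, pathwise, the original queries coincide with those generated inside $\mathcal A'$.
\item The joint law of $(U,\xi_0,\xi_1,\ldots)$ is unchanged: $U$ is uniform on $\Ortho(d,T)$, the seeds are i.i.d., and the instance is independent of the seeds.
\end{itemize}
Lemma~\ref{lb:lem:finite-random-rotation} therefore applies to $\mathcal A'$, with $\pi_N=\max_t\prog_{1/4}(U^\top\tilde x_t)=\max_t\prog_{1/4}(U^\top\rho(x_t))$. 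This gives the stated bound.

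\textbf{Terminal output.} For the terminal statement, the output $x_{\rm out}$ of $\mathcal A$ is $\sigma(\xi_0,\mathcal T_N)$-measurable. Let $\mathcal A'$ output $\rho(x_{\rm out})$. This point is measurable with respect to the transcript of $\mathcal A'$ and $\xi_0$, and its norm is at most $R$. Corollary~\ref{lb:cor:terminal-output} then yields the bound with $U^\top\rho(x_{\rm out})$.

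\textbf{Main obstacle: measurability under the conditioning $\sigma$-fields.} The delicate step is to confirm that the proofs of Lemmas~\ref{lb:lem:conditional-haar} and~\ref{lb:lem:one-pair-guessing} use the actual queries only through the vectors entering $U^\top\cdot$. The subspaces $\mathsf S$ must be spanned by the $\tilde x_q$, and not by the raw $x_q$.

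I would verify this by observing that, in the quiet-simulation recursion \eqref{lb:eq:quiet-query-maps}, the Borel maps $\chi_q$ may now output $\tilde x_q=\rho(x_q)$. Here $x_q$ is itself a Borel function of $(\vartheta,u_{<j},\bar v_{<q})$, because post-processing the truncated response is Borel and $U$-free. Lemma~\ref{lb:lem:sequential-posterior} is stated for arbitrary Borel $\chi_q$ and applies unchanged. Everything else, including the rare-progress martingale and the projection accumulation, goes through verbatim with $x_t$ replaced by $\tilde x_t$.
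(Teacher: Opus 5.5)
Your proposal is correct and follows the same route as the paper's proof. You simulate the original algorithm by one that queries the canonical oracle at $\rho(x_t)$, applies the $U$-free postprocessing $\mathsf P_{x_t,y_t}$ itself, and returns $\rho(x_{\rm out})$ without an extra call, so Lemma~\ref{lb:lem:finite-random-rotation} and Corollary~\ref{lb:cor:terminal-output} apply to it. Your final paragraph re-checking the conditioning arguments is harmless but unnecessary: once the simulator is a valid Borel adaptive algorithm whose queries have norm at most $R$, those results apply to it as black boxes, with its own queries $\rho(x_t)$ automatically playing the role of $x_t$ in every span.
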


\begin{proof}
An algorithm interacting with the actual oracle can be simulated by an
algorithm interacting with the richer canonical oracle: the simulator
queries $\rho(x)$, receives the canonical response, applies the known
postprocessing, and passes the result to the original algorithm.  At
termination it returns $\rho(x_{\rm out})$, with no additional call.
Joint measurability makes this a valid adaptive simulation.
\end{proof}

\subsection{The comparison oracle and the first-order benchmark}
\label{lb:sec:comparison}

\begin{proof}[Proof of Proposition~\ref{prop:lb-comparison}]
Take the scaled pair $(A_U,u_U)$ from
Proposition~\ref{lb:prop:scaled-pair}, with $T$ as in
\eqref{lb:eq:T-choice}.  For $0<\varpi\le1/2$, put
$N_{\mathrm s}:=\lfloor T/(8\varpi)\rfloor$ and choose
\begin{equation}
\label{lb:eq:comparison-dimension}
  d:=\left\lceil N_{\mathrm s}+1+T+
  32R^2(N_{\mathrm s}+1)\log\bigl(64(N_{\mathrm s}+1)T\bigr)\right\rceil.
\end{equation}
Draw $U$ uniformly from $\Ortho(d,T)$, independently of the algorithm
seed and oracle draws.  The functions are smooth; their Lipschitz,
value, and gap bounds follow from
\eqref{lb:eq:scaled-u-bounds}--\eqref{lb:eq:scaled-gap}.

At a bounded query $\bar r\in\R^d$, write $v=U^\top\bar r$ and consider
\begin{equation}
\label{lb:eq:comparison-canonical}
  (w,v_B,v_W)
  :=\left(\frac\zeta\varpi W_T(v),\,
           U\nabla B_T(v),\,
           U\frac\zeta\varpi\nabla W_T(v)\right).
\end{equation}
By Corollary~\ref{lb:cor:two-channel-zero-chain}, its two vector
channels satisfy Definition~\ref{lb:def:finite-chain} with exceptional
event $\{\zeta=1\}$.  The scalar channel is a Borel function of the
latent query $v$ and the seed, as allowed by that definition.
Hence the full response, including the scalar value, is covered by
Corollary~\ref{lb:cor:terminal-output}.

One comparison call at an arbitrary $x$ is obtained by querying
\eqref{lb:eq:comparison-canonical} at
$\bar r=\rho_R(x/\lambda_{\rm lb})$ and returning
\begin{equation}
\label{lb:eq:comparison-postprocessing}
\begin{aligned}
  r_A&:=\frac a{\lambda_{\rm lb}}
    J_R(x/\lambda_{\rm lb})^\top v_B
    +\frac{a\eta_{\rm r}}{\lambda_{\rm lb}^2}x,\\
  r_u&:=aw,\qquad
  r_{\nabla u}:=\frac a{\lambda_{\rm lb}}
    J_R(x/\lambda_{\rm lb})^\top v_W.
\end{aligned}
\end{equation}
Differentiating \eqref{lb:eq:scaled-Phi}--\eqref{lb:eq:scaled-u} shows
that $(r_A,r_u,r_{\nabla u})$ is exactly
$\mathcal R_\varpi(x)$ in \eqref{eq:lb-comparison-oracle}.
These transformations are jointly continuous and independent of $U$.
The compressed final output
$\bar r_{\rm out}=\rho_R(x_{\rm out}/\lambda_{\rm lb})$ has norm at most
$R$ and costs no oracle call.

For any algorithm using at most $N_{\mathrm s}$ comparison calls,
pad a shorter run with ignored calls.
Corollaries~\ref{lb:cor:postprocessing} and \ref{lb:cor:terminal-output}
and the dimension choice give
\begin{equation}
\label{lb:eq:comparison-unfinished}
  \PP\bigl(\prog_{1/4}(U^\top\bar r_{\rm out})\ge T\bigr)
  \le e^{2\varpi N_{\mathrm s}-T}+\frac1{32}
  \le e^{-3T/4}+\frac1{32}<\frac14,
\end{equation}
since $T\ge4$.  Equation~\eqref{lb:eq:scaled-stationarity} now gives
\eqref{eq:lb-common-progress}.  Thus $c=1/8$ is admissible, and a
common numerical $C$ bounds the remaining quantities.
\end{proof}

\subsubsection{Variance and the ordinary first-order lower bound}
\label{lb:sec:first-order}

\begin{lemma}[Variance and the ordinary first-order lower bound]
\label{lem:lb-gradient-variance}
For each pair in Proposition~\ref{prop:lb-comparison},
\eqref{eq:lb-first-order-oracle} is conditionally unbiased and
\begin{equation}
\label{eq:lb-signal-variance}
  \E[\|\widehat{\nabla P}(x)-\nabla P(x)\|^2\mid\mathcal H]
  =\frac{1-\varpi}{\varpi}\|\nabla u(x)\|^2
  \le\frac{C^2\eps^2}{\varpi}.
\end{equation}
Choosing $\varpi=2(1+C^2)\eps^2\le1/2$ gives a family with variance at most one for which guaranteeing
$\E\|\nabla P(x_{\rm out})\|^2\le\eps^2$ requires
$\Omega(\eps^{-4})$ queries.
\end{lemma}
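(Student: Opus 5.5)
The lemma has two parts: an exact variance identity, and a transfer of Proposition~\ref{prop:lb-comparison} to the ordinary oracle \eqref{eq:lb-first-order-oracle}. The plan is to compute the identity directly, fix $\varpi$ so the variance is at most one, and then pass from the probability bound \eqref{eq:lb-common-progress} to a second-moment bound.

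\textbf{Variance identity.} Since $\nabla P=\nabla A-\nabla u$, we have
\[
\widehat{\nabla P}(x)-\nabla P(x)=\Bigl(1-\frac{\zeta}{\varpi}\Bigr)\nabla u(x).
\]
The query $x$ is $\mathcal H$-measurable and $\zeta$ is a fresh Bernoulli$(\varpi)$ draw. Hence the conditional mean is $(1-\E[\zeta/\varpi])\nabla u(x)=0$, which gives unbiasedness. The conditional second moment is $\mathrm{Var}(\zeta/\varpi)\,\|\nabla u(x)\|^2=\frac{1-\varpi}{\varpi}\|\nabla u(x)\|^2$. The bound \eqref{eq:lb-frontier-size} then gives at most $C^2\eps^2/\varpi$, which proves \eqref{eq:lb-signal-variance}.

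\textbf{Choice of $\varpi$.} Take $\varpi=2(1+C^2)\eps^2$. This is at most $1/2$ after shrinking $\eps_0$. The variance bound becomes $C^2/(2(1+C^2))<1$, so every $U$ yields a valid instance of the ordinary oracle model. The function also satisfies the stated smoothness and gap requirements, since $\nabla P=\nabla A-\nabla u$ is $2C$-Lipschitz and $P(0)-\inf P\le 1/4$ by Proposition~\ref{prop:lb-comparison}.

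\textbf{Transfer to the ordinary oracle.} As noted after \eqref{eq:lb-comparison-oracle}, one call to $\mathcal R_\varpi$ simulates one call to \eqref{eq:lb-first-order-oracle} by taking first component minus third component, with identical joint randomness. So any algorithm making $N\le cT/\varpi$ ordinary calls is an algorithm making $N$ comparison calls with the same output law. Proposition~\ref{prop:lb-comparison} then gives $\PP(\|\nabla P(x_{\rm out})\|\ge 2\eps)\ge 3/4$, the probability being over $U$ and all other randomness. Therefore
\[
\E\|\nabla P(x_{\rm out})\|^2\ge 4\eps^2\cdot\tfrac34=3\eps^2>\eps^2 .
\]
Averaging over $U$ shows that for some fixed $U$ the resulting instance, which is a legitimate member of the class, also has $\E\|\nabla P(x_{\rm out})\|^2>\eps^2$. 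Guaranteeing $\eps$-stationarity therefore needs more than
\[
\frac{cT}{\varpi}=\frac{c\lfloor c_T\eps^{-2}\rfloor}{2(1+C^2)\eps^2}=\Omega(\eps^{-4})
\]
queries.

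\textbf{Main obstacle.} The only subtle point is this derandomization over $U$. The probability bound holds jointly over $U$, the seed and the oracle draws. Because $U$ is independent of the algorithm's seed and the oracle draws, the expectation decomposes as an average over $U$ of per-instance expectations. Hence some fixed instance is at least as bad as the average. The remaining steps are routine.
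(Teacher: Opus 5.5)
Your proposal is correct and follows essentially the same route as the paper: the exact variance identity from $(1-\zeta/\varpi)\nabla u(x)$, the choice $\varpi=2(1+C^2)\eps^2$, the one-call simulation via $r_A-r_{\nabla u}$ feeding into Proposition~\ref{prop:lb-comparison}, and derandomization over $U$. The only cosmetic difference is that you derandomize the second moment averaged over $U$, whereas the paper derandomizes the probability of the event $\{\|\nabla P(x_{\rm out})\|\ge2\eps\}$; the two are equally valid.
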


\begin{proof} %
Fix $U$ and condition on the history $\mathcal H$, which fixes the query
$x$.  The fresh Bernoulli seed satisfies $\E[\zeta/\varpi]=1$, so
\eqref{eq:lb-first-order-oracle} is conditionally unbiased.  Moreover,
\[
  \E[\|\widehat{\nabla P_U}(x)-\nabla P_U(x)\|^2\mid\mathcal H]
  =\frac{1-\varpi}{\varpi}\|\nabla u_U(x)\|^2
  \le\frac{C^2\eps^2}{\varpi},
\]
where $C$ is the constant in Proposition~\ref{prop:lb-comparison}.
If $\mathcal R_\varpi(x)=(r_A,r_u,r_{\nabla u})$, then
$r_A-r_{\nabla u}$ has exactly this stochastic-gradient law, with one
comparison call per gradient call.

Set $\varpi=2(1+C^2)\eps^2$.  For sufficiently small $\eps$ this lies
in $(0,1/2]$, and the conditional variance is at most $1/2$.
The objective has gradient Lipschitz constant at most $2C$ and initial
gap at most $1/4$, by Proposition~\ref{prop:lb-comparison}.
Simulating any stochastic-gradient algorithm with at most
$T/(8\varpi)$ calls therefore gives a comparison-oracle algorithm with
the same budget and output law.  Proposition~\ref{prop:lb-comparison}
yields
\[
  \PP\bigl(\|\nabla P_U(x_{\rm out})\|\ge2\eps\bigr)\ge\frac34.
\]
Averaging this probability over $U$ gives a fixed rotation on which the
probability, now over algorithm and oracle randomness, is at least
$3/4$.  On that instance,
$\E\|\nabla P_U(x_{\rm out})\|^2\ge3\eps^2>\eps^2$.
Finally, $T/\varpi=\Theta(\eps^{-4})$.
\end{proof}

\subsection{The bilevel construction}
\label{lb:sec:lift}

Fix $0<\eps\le\delta\le1$ and a scaled pair $(A_U,u_U)$ from
Proposition~\ref{lb:prop:scaled-pair}.  With $x\in\R^d$ and $y\in\R$, set
\begin{equation}\label{lb:eq:lift-f}
  f_U(x,y):=A_U(x)+y,
\end{equation}
\begin{equation}\label{lb:eq:lift-g}
  g_U(x,y):=\frac12y^2+\delta\sin(y/\delta)u_U(x).
\end{equation}
The dependence on $\delta$ is suppressed in this appendix.

\subsubsection{Primitive oracle: simulation and variance}

For $0<\varpi\le1/2$, draw a fresh
$\zeta\sim\mathrm{Bernoulli}(\varpi)$ at each call and return
\begin{align}
  \hat F_x&:=\nabla A_U(x),
  &\hat F_y&:=1,
  \label{lb:eq:oracle-f}\\
  \hat G_x&:=\delta\sin(y/\delta)\frac\zeta\varpi\nabla u_U(x),
  &\hat G_y&:=y+\cos(y/\delta)\frac\zeta\varpi u_U(x).
  \label{lb:eq:oracle-g}
\end{align}
The seeds are independent across calls and independent of $U$ and the
algorithm seed.  Define the numerical constant
\begin{equation}\label{lb:eq:Cvar}
  C_{\rm var}:=C_{u,1}^2+C_{u,0}^2.
\end{equation}

\begin{lemma}[Primitive simulation and conditional variance]
\label{lb:lem:primitive-simulation}
For every fixed $U$ and selected query $(x,y)$, the four estimates are
conditionally unbiased and their joint conditional variance is at most
$C_{\rm var}\delta^2\eps^2/\varpi$.
One primitive call is a known measurable transformation of one
comparison call $\mathcal R_\varpi(x)$.
\end{lemma}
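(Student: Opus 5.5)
We verify the three claims in turn by direct computation, conditioning on the history $\mathcal H$ that fixes $U$ and the query $(x,y)$. The only randomness left is the fresh Bernoulli seed $\zeta$, which is independent of $\mathcal H$ and satisfies $\E[\zeta/\varpi]=1$.

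\emph{Unbiasedness.} Differentiate \eqref{lb:eq:lift-f}--\eqref{lb:eq:lift-g} to obtain
\[
\nabla_x f_U=\nabla A_U(x),\qquad \nabla_y f_U=1,
\]
\[
\nabla_x g_U=\delta\sin(y/\delta)\nabla u_U(x),\qquad \nabla_y g_U=y+\cos(y/\delta)u_U(x).
\]
Compare these with \eqref{lb:eq:oracle-f}--\eqref{lb:eq:oracle-g}. The $f$-blocks are deterministic and exact. Each $g$-block is an $\mathcal H$-measurable deterministic part plus $\mathcal H$-measurable coefficients multiplied by $\zeta/\varpi$, so conditional unbiasedness follows from $\E[\zeta/\varpi]=1$.

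\emph{Variance.} The $f$-blocks contribute zero error. The $g$-block errors are
\[
(\zeta/\varpi-1)\,\delta\sin(y/\delta)\nabla u_U(x)\qquad\text{and}\qquad (\zeta/\varpi-1)\cos(y/\delta)u_U(x).
\]
Using $\E[(\zeta/\varpi-1)^2]=(1-\varpi)/\varpi\le1/\varpi$, the joint variance is at most
\[
\varpi^{-1}\bigl(\delta^2\|\nabla u_U\|^2+|u_U|^2\bigr).
\]
Now apply \eqref{lb:eq:scaled-u-bounds}: $\|\nabla u_U\|\le C_{u,1}\eps$ and $|u_U|\le C_{u,0}\eps^2$. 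The only point needing care is the second term. Since $\eps\le\delta$, we have $\eps^4\le\delta^2\eps^2$, so $|u_U|^2\le C_{u,0}^2\delta^2\eps^2$. Together these give the bound $C_{\rm var}\delta^2\eps^2/\varpi$ with $C_{\rm var}$ as in \eqref{lb:eq:Cvar}.

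\emph{Simulation.} Given one response $(r_1,r_2,r_3)=\mathcal R_\varpi(x)$, output
\[
\bigl(r_1,\;1,\;\delta\sin(y/\delta)r_3,\;y+\cos(y/\delta)r_2\bigr).
\]
This map is jointly Borel (indeed continuous) in $(x,y,r_1,r_2,r_3)$ and does not depend on $U$. Because the same $\zeta$ drives $r_2$ and $r_3$, this output has exactly the joint law of \eqref{lb:eq:oracle-f}--\eqref{lb:eq:oracle-g}. No step here is a genuine obstacle; the only subtle point is the use of $\delta\ge\eps$ to absorb the scalar channel into the $\delta^2\eps^2$ scale.
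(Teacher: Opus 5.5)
Your proposal is correct and follows the paper's proof essentially step for step. Unbiasedness comes from $\E[\zeta/\varpi]=1$. The joint variance $\frac{1-\varpi}{\varpi}\bigl(\delta^2\sin^2(y/\delta)\|\nabla u_U\|^2+\cos^2(y/\delta)|u_U|^2\bigr)$ is bounded via \eqref{lb:eq:scaled-u-bounds}, with $\eps\le\delta$ absorbing the scalar term, and the simulation uses the same $U$-independent continuous map $(r_1,1,\delta\sin(y/\delta)r_3,y+\cos(y/\delta)r_2)$ sharing a single Bernoulli draw.
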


\begin{proof}
Condition on the history, fixing $(x,y)$.  Unbiasedness follows from
$\E[\zeta/\varpi]=1$ and direct differentiation of
\eqref{lb:eq:lift-f}--\eqref{lb:eq:lift-g}.
The upper estimates are exact.  The sum of the conditional squared
errors in the lower estimates is
\begin{align}
 &\frac{1-\varpi}{\varpi}
 \left(\delta^2\sin^2(y/\delta)\|\nabla u_U(x)\|^2
       +\cos^2(y/\delta)|u_U(x)|^2\right)\nonumber\\
 &\qquad\le
 \frac{C_{u,1}^2\delta^2\eps^2+C_{u,0}^2\eps^4}{\varpi}
 \le\frac{C_{\rm var}\delta^2\eps^2}{\varpi}.
 \label{lb:eq:primitive-variance}
\end{align}
Here Proposition~\ref{lb:prop:scaled-pair} supplies the uniform bounds,
and $\eps\le\delta$ controls the scalar contribution.

If $\mathcal R_\varpi(x)=(r_A,r_u,r_{\nabla u})$, return
\begin{equation}
\label{lb:eq:primitive-from-comparison}
  \left(r_A,\ 1,\ \delta\sin(y/\delta)r_{\nabla u},\
        y+\cos(y/\delta)r_u\right).
\end{equation}
This reproduces the full joint law of
\eqref{lb:eq:oracle-f}--\eqref{lb:eq:oracle-g}, including the common
Bernoulli draw.  The transformation is continuous and independent of
$U$.  Applying it after every comparison call simulates an adaptive
primitive-oracle algorithm, with the same query choices and output law.
\end{proof}

\subsubsection{Lower solution and outer gradient}

For $|s|/\delta\le1/4$, let $Y_\delta(s)$ denote the unique solution of
\begin{equation}\label{lb:eq:Y-equation}
  Y+s\cos(Y/\delta)=0.
\end{equation}

\begin{lemma}[Lower response]
\label{lb:lem:lower-solution}
For $|s|/\delta\le1/4$, the response $Y_\delta(s)$ is continuously
differentiable and obeys
\begin{align}
  |Y_\delta(s)|&\le |s|,
  \label{lb:eq:Y-size}\\
  |Y_\delta(s)+s|&\le\frac{|s|^3}{2\delta^2},
  \label{lb:eq:Y-value-error}\\
  |Y_\delta'(s)+1|&\le\frac{2s^2}{\delta^2}.
  \label{lb:eq:Y-derivative-error}
\end{align}
For all sufficiently small $\eps$, uniformly over $\eps\le\delta\le1$, the lower
solution of \eqref{lb:eq:lift-g} is
\begin{equation}\label{lb:eq:y-star}
  y_U^\star(x)=Y_\delta(u_U(x)),
\end{equation}
and $\nabla^2_{yy}g_U(x,y)\ge3/4$ globally.
\end{lemma}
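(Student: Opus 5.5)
The plan is to treat \eqref{lb:eq:Y-equation} as a scalar fixed-point problem $Y=\Gamma_s(Y):=-s\cos(Y/\delta)$. For $|s|/\delta\le1/4$, $\Gamma_s$ maps $\R$ into $[-|s|,|s|]$, and $|\Gamma_s'(Y)|=|s||\sin(Y/\delta)|/\delta\le1/4$. So $\Gamma_s$ is a contraction and has a unique fixed point $Y_\delta(s)$ with $|Y_\delta(s)|\le|s|$, which is \eqref{lb:eq:Y-size}. Continuous differentiability will follow from the implicit function theorem applied to $F(Y,s)=Y+s\cos(Y/\delta)$. Here $\partial_YF=1-(s/\delta)\sin(Y/\delta)\ge3/4$, which gives
\[
Y_\delta'(s)=-\frac{\cos(Y/\delta)}{1-(s/\delta)\sin(Y/\delta)}.
\]
Uniqueness of the fixed point makes the local branches agree globally on the interval.

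For the quantitative estimates, I would write $Y+s=s(1-\cos(Y/\delta))$ and use $0\le1-\cos t\le t^2/2$ together with $|Y|\le|s|$. This gives $|Y+s|\le|s|\cdot s^2/(2\delta^2)$, i.e.\ \eqref{lb:eq:Y-value-error}. For the derivative, I would write
\[
Y'+1=\frac{(1-\cos(Y/\delta))-(s/\delta)\sin(Y/\delta)}{1-(s/\delta)\sin(Y/\delta)}.
\]
The numerator is at most $s^2/(2\delta^2)+|s||Y|/\delta^2\le\tfrac32 s^2/\delta^2$, and the denominator is at least $3/4$. The bound is therefore $2s^2/\delta^2$, which is \eqref{lb:eq:Y-derivative-error}.

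For the bilevel statement, I would compute $\nabla^2_{yy}g_U(x,y)=1-\sin(y/\delta)u_U(x)/\delta$. Proposition~\ref{lb:prop:scaled-pair} gives $|u_U|\le C_{u,0}\eps^2$, and $\delta\ge\eps$, so $|u_U|/\delta\le C_{u,0}\eps\le1/4$ once $\eps$ is small, uniformly in $\delta\in[\eps,1]$. Hence $\nabla^2_{yy}g_U\ge3/4$ globally, and $g_U(x,\cdot)$ is strongly convex with a unique minimizer. That minimizer is characterized by $\partial_yg_U=y+\cos(y/\delta)u_U(x)=0$, which is \eqref{lb:eq:Y-equation} with $s=u_U(x)$ and $|s|/\delta\le1/4$. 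Uniqueness then yields \eqref{lb:eq:y-star}.

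The only delicate point is keeping track of constants in the derivative bound. I do not expect a genuine obstacle: the whole argument rests on the smallness $|s|/\delta\le1/4$, which ensures both the contraction and the lower bound on the denominator.
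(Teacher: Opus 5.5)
Your proposal is correct and follows the paper's proof essentially step for step. It uses the same implicit-differentiation formula, the same identity $Y+s=s(1-\cos(Y/\delta))$, the same bound $\tfrac12\beta^2+\beta^2$ on the numerator over the bound $3/4$ on the denominator, and the same check $\|u_U\|_\infty/\delta\le C_{u,0}\eps\le 1/4$ for the bilevel part. The only cosmetic difference is that you obtain existence and uniqueness of $Y_\delta(s)$ from the contraction $Y\mapsto -s\cos(Y/\delta)$, whose Lipschitz constant is at most $1/4$; the paper instead notes that $Y\mapsto Y+s\cos(Y/\delta)$ has derivative at least $3/4$ and hence crosses zero exactly once.
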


\begin{proof}
The derivative in $Y$ of the left side of \eqref{lb:eq:Y-equation} is
\[
  1-(s/\delta)\sin(Y/\delta)\ge\frac34.
\]
Hence that function is strictly increasing and crosses zero exactly once;
the implicit-function theorem gives differentiability.  Equation
\eqref{lb:eq:Y-equation} gives $|Y_\delta(s)|\le|s|$, and
\[
  |Y_\delta(s)+s|
  =|s|\,|1-\cos(Y_\delta(s)/\delta)|
  \le\frac{|s|^3}{2\delta^2}.
\]
Implicit differentiation yields
\[
  Y_\delta'(s)
  =-\frac{\cos(Y_\delta(s)/\delta)}
  {1-(s/\delta)\sin(Y_\delta(s)/\delta)}.
\]
With $t=Y_\delta(s)/\delta$ and $\beta=s/\delta$,
\[
  |Y_\delta'(s)+1|
  =\left|\frac{1-\cos t-\beta\sin t}{1-\beta\sin t}\right|
  \le\frac{\tfrac12\beta^2+\beta^2}{3/4}
  =2\beta^2,
\]
where we used $|t|\le|\beta|$, $|1-\cos t|\le t^2/2$, and
$|\sin t|\le|t|$.  This proves \eqref{lb:eq:Y-derivative-error}.  Finally,
Proposition~\ref{lb:prop:scaled-pair} and $\delta\ge\eps$ imply
\[
  \frac{\norm{u_U}_\infty}{\delta}
  \le C_{u,0}\eps\le\frac14
\]
for a universal choice of $\eps_0$.  Since
\[
  \nabla_y g_U=y+\cos(y/\delta)u_U,
  \qquad
  \nabla^2_{yy}g_U=1-\delta^{-1}\sin(y/\delta)u_U,
\]
the claims follow.
\end{proof}

Define the actual hyperobjective
\begin{equation}\label{lb:eq:actual-hyperobjective}
  \Phi_U(x)
  :=f_U(x,y_U^\star(x))
  ={A_U}(x)+Y_\delta(u_U(x)).
\end{equation}

\begin{corollary}[Controlled perturbation of the hard objective]
\label{lb:cor:lift-perturbation}
There are universal constants $C_F,C_\nabla>0$ such that
\begin{align}
  \sup_x|\Phi_U(x)-P_U(x)|
  &\le C_F\frac{\eps^6}{\delta^2}
  \le C_F\eps^4,
  \label{lb:eq:lift-value-perturb}\\
  \sup_x\norm{\nabla \Phi_U(x)
  -\nabla P_U(x)}
  &\le C_\nabla\frac{\eps^5}{\delta^2}
  \le C_\nabla\eps^3.
  \label{lb:eq:lift-gradient-perturb}
\end{align}
After decreasing $\eps_0$ universally if necessary,
\begin{equation}\label{lb:eq:lift-gap}
  \Phi_U(0)-\inf_x\Phi_U(x)\le1,\qquad |y_U^\star(0)|\le1,
\end{equation}
and
\begin{equation}\label{lb:eq:lift-stationarity}
  \|\nabla P_U(x)\|\ge2\eps
  \quad\Longrightarrow\quad
  \norm{\nabla \Phi_U(x)}\ge\frac32\eps.
\end{equation}
\end{corollary}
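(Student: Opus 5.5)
The plan is to work directly from the identity \eqref{lb:eq:actual-hyperobjective}, $\Phi_U=A_U+Y_\delta(u_U)$, together with $P_U=A_U-u_U$. This gives
\[
\Phi_U-P_U=Y_\delta(u_U)+u_U .
\]
Everything then reduces to the scalar estimates of Lemma~\ref{lb:lem:lower-solution}, evaluated at $s=u_U(x)$, combined with the uniform bounds \eqref{lb:eq:scaled-u-bounds}. Lemma~\ref{lb:lem:lower-solution} applies because $|u_U|/\delta\le C_{u,0}\eps\le1/4$ for small $\eps$.

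For the value bound, \eqref{lb:eq:Y-value-error} gives
\[
|\Phi_U-P_U|\le\frac{|u_U|^3}{2\delta^2}\le\frac{C_{u,0}^3\eps^6}{2\delta^2}.
\]
Using $\delta\ge\eps$, this is at most $C_F\eps^4$.

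For the gradient, the chain rule gives
\[
\nabla\Phi_U-\nabla P_U=\bigl(Y_\delta'(u_U)+1\bigr)\nabla u_U .
\]
By \eqref{lb:eq:Y-derivative-error}, the scalar factor is at most $2u_U^2/\delta^2\le 2C_{u,0}^2\eps^4/\delta^2$. Multiplying by $\|\nabla u_U\|\le C_{u,1}\eps$ yields $C_\nabla\eps^5/\delta^2\le C_\nabla\eps^3$. Here $Y_\delta$ is $C^1$ and $u_U$ is smooth, so the chain rule is legitimate.

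The remaining claims follow from these two bounds.

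\textbf{Gap bound.} We have
\[
\Phi_U(0)-\inf\Phi_U\le P_U(0)-\inf P_U+2\sup|\Phi_U-P_U|\le \tfrac14+2C_F\eps^4\le1
\]
for small $\eps$, using \eqref{lb:eq:scaled-gap}.

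\textbf{Initial lower solution.} By \eqref{lb:eq:Y-size}, $|y_U^\star(0)|=|Y_\delta(u_U(0))|\le|u_U(0)|\le C_{u,0}\eps^2\le1$.

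\textbf{Stationarity transfer.} If $\|\nabla P_U(x)\|\ge2\eps$, then
\[
\|\nabla\Phi_U(x)\|\ge2\eps-C_\nabla\eps^3\ge\tfrac32\eps
\]
once $C_\nabla\eps^2\le1/2$.

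All thresholds on $\eps_0$ involve only the numerical constants $C_{u,0},C_{u,1},C_F,C_\nabla$, so a single universal decrease of $\eps_0$ suffices uniformly in $\delta\in[\eps,1]$ and in $U$.

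The only delicate point is confirming that Lemma~\ref{lb:lem:lower-solution}'s hypothesis $|s|/\delta\le1/4$ holds uniformly over $x$. This ensures the derivative bound applies globally and that $y_U^\star=Y_\delta\circ u_U$ is the true lower solution, so the hyperobjective is differentiable with the stated gradient. It follows from $\|u_U\|_\infty\le C_{u,0}\eps^2$ and $\delta\ge\eps$. Beyond that, the argument is routine bookkeeping.
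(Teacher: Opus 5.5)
Your proposal is correct and follows the paper's proof essentially step for step: you write $\Phi_U-P_U=Y_\delta(u_U)+u_U$ and apply the scalar estimates of Lemma~\ref{lb:lem:lower-solution} together with \eqref{lb:eq:scaled-u-bounds}, then derive the gap, initial-solution, and stationarity claims from the two perturbation bounds. Your explicit constants $C_F=C_{u,0}^3/2$ and $C_\nabla=2C_{u,0}^2C_{u,1}$ agree with the paper's constant in \eqref{lb:eq:retained-correction}.
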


\begin{proof}
The first estimate follows from \eqref{lb:eq:Y-value-error},
$P_U={A_U}-u_U$, and
\eqref{lb:eq:scaled-u-bounds}.  The chain rule and
\eqref{lb:eq:Y-derivative-error} give
\[
  \nabla \Phi_U
  =\nabla{A_U}
   +Y_\delta'(u_U)\nabla u_U,
\]
and hence the second estimate.  Here $\delta\ge\eps$ makes both displayed
upper bounds uniform over $\eps\le\delta\le1$.  Combining the value estimate with
\eqref{lb:eq:scaled-gap} gives the objective-gap bound.  The initial lower
solution satisfies $|y_U^\star(0)|\le C_{u,0}\eps^2\le1$ for small enough
$\eps$, proving \eqref{lb:eq:lift-gap}.  If $\|\nabla P_U(x)\|\ge2\eps$,
the gradient estimate gives
$\|\nabla\Phi_U(x)\|\ge2\eps-C_\nabla\eps^3\ge3\eps/2$
for a numerical sufficiently small $\eps_0$.
\end{proof}

The gradient estimate can also be read directly from the implicit
hypergradient.  In the notation of Section~\ref{sec:problem},
\[
  b=1,\qquad
  H_U(x,y)=1-\delta^{-1}\sin(y/\delta)u_U(x),\qquad
  J_U(x,y)=\cos(y/\delta)\nabla u_U(x).
\]
At $y=y_U^\star(x)$, implicit differentiation gives
\begin{equation}
\label{lb:eq:retained-correction}
\begin{aligned}
  J_U(x,y_U^\star(x))z_U^\star(x)
  &=-Y_\delta'(u_U(x))\nabla u_U(x),\\
  \|J_U(x,y_U^\star(x))z_U^\star(x)-\nabla u_U(x)\|
  &\le2C_{u,0}^2C_{u,1}\frac{\eps^5}{\delta^2}.
\end{aligned}
\end{equation}
Thus the mixed-derivative term in the hypergradient retains the
correction $\nabla u_U$ even though the primitive signal is smaller.

\subsubsection{Base and higher-order regularity}

\begin{proof}[Proof of Proposition~\ref{prop:lb-scalar-transfer}]
The Hessian blocks of $g_U$ are
\[
  \nabla^2_{xx}g_U=\delta\sin(y/\delta)\nabla^2u_U(x),\qquad
  \nabla^2_{xy}g_U=\cos(y/\delta)\nabla u_U(x),
\]
\[
  \nabla^2_{yy}g_U=1-\delta^{-1}\sin(y/\delta)u_U(x).
\]
They are bounded by numerical constants for all $(x,y)$, by
Proposition~\ref{lb:prop:scaled-pair} and $\eps\le\delta\le1$.
Hence the full gradient of $g_U$ is jointly Lipschitz.
The $x$- and $y$-Lipschitz moduli of $J_U$ are bounded by
$C_{u,2}$ and $C_{u,1}\eps/\delta$, respectively; those of $H_U$
are bounded by $C_{u,1}\eps/\delta$ and $C_{u,0}\eps^2/\delta^2$.
Combining the two moduli in each case gives a numerical joint
Lipschitz bound, uniformly over $\eps\le\delta\le1$.
Lemma~\ref{lb:lem:lower-solution} gives strong convexity with $\mu=3/4$.
Also $\nabla_y f_U=1$, and \eqref{lb:eq:Phi-hessian-bound} bounds the
full Hessian of $f_U$ by one.  The objective is bounded below and obeys
\eqref{eq:lb-class-initial} by Corollary~\ref{lb:cor:lift-perturbation}.
This proves Assumption~\ref{ass:base}.

The same corollary proves \eqref{eq:lb-retained-correction}, while
Lemma~\ref{lb:lem:primitive-simulation} supplies conditional
unbiasedness, variance, and one-call simulation.  A numerical constant
$C_{\rm lift}\ge\max\{C_\nabla,C_{\rm var}\}$ bounds both quantities
in the proposition.
\end{proof}

For an integer $p\ge1$, choose
\begin{equation}\label{lb:eq:delta-choice}
  \delta:=\eps^{1/p}\in[\eps,1].
\end{equation}

\begin{lemma}[Uniform order-$p$ regularity]
\label{lb:lem:analytic-class}
With \eqref{lb:eq:delta-choice}, the family satisfies
Assumptions~\ref{ass:base} and \ref{ass:order-p} with a numerical bound
on every constant, independent of $T,d,U,\eps$, and $p$.
\end{lemma}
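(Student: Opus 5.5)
Assumption~\ref{ass:base} is already handled uniformly over $\eps\le\delta\le1$ by the proof of Proposition~\ref{prop:lb-scalar-transfer}, and $\delta=\eps^{1/p}\in[\eps,1]$ lies in that range. So only Assumption~\ref{ass:order-p} remains to be checked. For $\ell\in\{x,y\}$ we need the map $y\mapsto\nabla_y^p\nabla_\ell g_U(x,y)$ to be Lipschitz with a numerical constant. The approach is to bound the next derivative, $\partial_y^{p+1}\nabla_\ell g_U$, uniformly in $(x,y)$. Since $y$ is scalar, this derivative is just a vector (for $\ell=x$) or a scalar (for $\ell=y$), so its bound is directly the Lipschitz modulus in \eqref{eq:p-smooth}. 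All these derivatives exist because $g_U$ is smooth in $y$.

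\emph{The $x$-block.} Here $\nabla_x g_U=\delta\sin(y/\delta)\nabla u_U(x)$. Differentiating $p+1$ times in $y$ gives
\[
\partial_y^{p+1}\nabla_x g_U=\delta^{-p}\,s_{p+1}(y/\delta)\,\nabla u_U(x),
\]
where $s_{p+1}$ is $\pm\sin$ or $\pm\cos$, so $|s_{p+1}|\le1$. By \eqref{lb:eq:scaled-u-bounds}, $\|\nabla u_U\|\le C_{u,1}\eps$. Hence the norm is at most $C_{u,1}\eps\,\delta^{-p}=C_{u,1}$.

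\emph{The $y$-block.} Here $\nabla_y g_U=y+\cos(y/\delta)u_U(x)$. The linear term $y$ disappears after two derivatives. For $p\ge1$ we differentiate $p+1\ge2$ times, so
\[
\partial_y^{p+1}\nabla_y g_U=\delta^{-(p+1)}c_{p+1}(y/\delta)\,u_U(x),\qquad |c_{p+1}|\le1.
\]
Using $|u_U|\le C_{u,0}\eps^2$, this is bounded by $C_{u,0}\eps^2\delta^{-(p+1)}=C_{u,0}\eps^{1-1/p}\le C_{u,0}$, because $\eps\le1$ and $1-1/p\ge0$.

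So $L_p\le\max\{C_{u,0},C_{u,1}\}$, a numerical constant independent of $p,T,d,U,\eps$. For $p=1$ this is consistent with the $\rho_1$ bound already computed in the proof of Proposition~\ref{prop:lb-scalar-transfer}, which also gives the base constants. The operator norm of the multilinear map $\nabla_y^p\nabla_\ell g$ with a scalar $y$ is exactly the norm of the corresponding vector, so no combinatorial factors appear. The only point needing care is the exponent bookkeeping $\eps^2\delta^{-(p+1)}\le1$, which is the main (mild) obstacle and which the choice \eqref{lb:eq:delta-choice} settles.
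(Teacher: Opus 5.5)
Your proof is correct and matches the paper's argument. Both differentiate the sinusoidal factors in $y$, bound every sine/cosine derivative by one, use $|u_U|\le C_{u,0}\eps^2$ and $\|\nabla u_U\|\le C_{u,1}\eps$, and let $\delta=\eps^{1/p}$ turn $\eps\delta^{-p}$ and $\eps^2\delta^{-(p+1)}$ into numerical constants. The only cosmetic difference is that the paper handles $p=1$ separately through the base Lipschitz bounds on $J_U$ and $H_U$, whereas your direct $(p+1)$st-derivative computation already covers $p=1$.
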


\begin{proof}
Proposition~\ref{prop:lb-scalar-transfer} proves the base regularity.
For $p\ge2$, write $t=y/\delta$ and denote derivatives of sine by
$\sin^{(j)}$.  Direct differentiation gives
\begin{equation}\label{lb:eq:high-g-formula}
\begin{aligned}
  \nabla_y^p\nabla_x g_U(x,y)
  &=\delta^{1-p}\sin^{(p)}(t)\nabla u_U(x),\\
  \nabla_y^p\nabla_y g_U(x,y)
  &=\delta^{-p}\sin^{(p+1)}(t)u_U(x).
\end{aligned}
\end{equation}
Every derivative of sine has absolute value at most one.  Since the
lower variable is scalar, the operator norms in
Assumption~\ref{ass:order-p} are respectively the Euclidean norm and
absolute value.  The Lipschitz moduli in $y$ are therefore at most
\[
  C_{u,1}\eps\delta^{-p}=C_{u,1},\qquad
  C_{u,0}\eps^2\delta^{-(p+1)}
  =C_{u,0}\eps^{1-1/p}\le C_{u,0}.
\]
For $p=1$, the base Lipschitz bounds for $J_U$ and $H_U$ imply the
same assumption.  All constants are independent of $p$.
\end{proof}

Finally, set
\begin{equation}\label{lb:eq:crr}
  c_{\rm rr}:=2(1+C_{\rm var}),
\end{equation}
and take
\begin{equation}\label{lb:eq:rr-choice}
  \varpi:=c_{\rm rr}\delta^2\eps^2
  =c_{\rm rr}\eps^{2+2/p}\le\frac12.
\end{equation}
The last inequality holds for a numerical sufficiently small
$\eps_0$, uniformly in $p$, since $\delta\le1$.

\begin{lemma}[Admissible primitive oracle]
\label{lb:lem:oracle-valid}
With \eqref{lb:eq:rr-choice}, the oracle
\eqref{lb:eq:oracle-f}--\eqref{lb:eq:oracle-g} satisfies
Assumption~\ref{ass:oracle} with $\sigma^2\le1/2$.
\end{lemma}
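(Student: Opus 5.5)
The plan is to verify the two requirements of Assumption~\ref{ass:oracle} directly, reusing Lemma~\ref{lb:lem:primitive-simulation}. The first requirement is unbiasedness \eqref{eq:oracle-mean}. Condition on a history $\mathcal H$ that fixes the query $(x,y)$. Each call draws a fresh $\zeta\sim\mathrm{Bernoulli}(\varpi)$, independent of $\mathcal H$, of $U$, and of the algorithm seed, so $\E[\zeta/\varpi\mid\mathcal H]=1$. Substituting this into \eqref{lb:eq:oracle-f}--\eqref{lb:eq:oracle-g} gives the conditional means $\nabla A_U(x)=\nabla_x f_U$, $1=\nabla_y f_U$, $\delta\sin(y/\delta)\nabla u_U(x)=\nabla_x g_U$ and $y+\cos(y/\delta)u_U(x)=\nabla_y g_U$, by direct differentiation of \eqref{lb:eq:lift-f}--\eqref{lb:eq:lift-g}.

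The second requirement is the variance bound \eqref{eq:oracle-variance}. Here I invoke the computation \eqref{lb:eq:primitive-variance}. The upper-level estimates are exact, so they contribute zero. The two lower-level errors together have conditional second moment at most $(1-\varpi)\varpi^{-1}\bigl(C_{u,1}^2\delta^2\eps^2+C_{u,0}^2\eps^4\bigr)$, which is bounded by $C_{\rm var}\delta^2\eps^2/\varpi$ using $\eps\le\delta$. With $\varpi=c_{\rm rr}\delta^2\eps^2$ from \eqref{lb:eq:rr-choice}, this becomes $C_{\rm var}/c_{\rm rr}=C_{\rm var}/(2(1+C_{\rm var}))<1/2$, so $\sigma^2\le1/2$. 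We also need $\varpi\le1/2$ so that $\zeta$ is a valid Bernoulli variable; this is exactly the condition recorded after \eqref{lb:eq:rr-choice}, and it holds for small $\eps_0$ uniformly in $p$.

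The remaining items in Assumption~\ref{ass:oracle} are structural. Measurability holds because the responses are continuous functions of $(x,y,\zeta)$ with $\zeta$ fresh. Calls are conditionally independent given a common history, since their seeds are i.i.d. and independent of everything else. Correlation among the four blocks within a single call, through the shared $\zeta$, is explicitly permitted by the assumption.

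I expect no step to be a genuine obstacle. The only subtle point is that the conditioning in Assumption~\ref{ass:oracle} is on the algorithm's history for a fixed instance $U$. The bounds hold for every fixed $U$, uniformly, so any conditioning on $U$ is harmless.
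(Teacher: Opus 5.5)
Your proposal is correct and follows essentially the same route as the paper. Unbiasedness and the joint variance bound $C_{\rm var}\delta^2\eps^2/\varpi$ come from Lemma~\ref{lb:lem:primitive-simulation}, the choice $\varpi=c_{\rm rr}\delta^2\eps^2$ reduces this to $C_{\rm var}/c_{\rm rr}\le1/2$, and measurability and conditional independence follow because each response is a Borel function of the query and a fresh Bernoulli seed. One minor correction: $\varpi\le1/2$ is not what makes $\zeta$ a valid Bernoulli variable (that only needs $\varpi\le1$); it is the range required by Proposition~\ref{prop:lb-comparison}.
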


\begin{proof}
Lemma~\ref{lb:lem:primitive-simulation} gives conditional unbiasedness
and joint variance at most
$C_{\rm var}\delta^2\eps^2/\varpi=C_{\rm var}/c_{\rm rr}\le1/2$.
The response is a Borel function of the query and fresh Bernoulli seed;
its pushforward law is a Borel kernel.  Independent seeds give the
required conditional independence between fresh calls.
\end{proof}

\subsection{Completion of the lower bound}
\label{lb:sec:main-proof}

\begin{proof}[Proof of Theorem~\ref{thm:lower}]
Fix an integer $p\ge1$ and $0<\eps<\eps_0$.  Take $T$ from
\eqref{lb:eq:T-choice}, $\delta=\eps^{1/p}$, and
$\varpi=c_{\rm rr}\delta^2\eps^2$ as in \eqref{lb:eq:rr-choice}.
Use the distribution of $(A_U,u_U)$ in
Proposition~\ref{prop:lb-comparison}, and form the bilevel pair and
primitive oracle in Section~\ref{lb:sec:lift}.
Define
\begin{equation}\label{lb:eq:N0}
  N_0:=\left\lfloor\frac{T}{8\varpi}\right\rfloor,
\end{equation}
and the numerical constant
\begin{equation}\label{lb:eq:clb-choice}
  c_{\rm lb}:=\frac{c_T}{64c_{\rm rr}}.
\end{equation}
For a numerical sufficiently small $\eps_0$, we have
$T\ge(c_T/2)\eps^{-2}$ and $T/(8\varpi)\ge2$, uniformly in $p$.
Consequently,
\begin{equation}\label{lb:eq:N0-dominates-clb}
  N_0\ge\frac{T}{16\varpi}
  \ge\frac{c_T}{32c_{\rm rr}}\eps^{-4-2/p}
  \ge c_{\rm lb}\eps^{-4-2/p}.
\end{equation}

Let an arbitrary adaptive randomized algorithm use at most $N_0$
primitive calls.  Lemma~\ref{lb:lem:primitive-simulation} simulates it
with the same number of calls to $\mathcal R_\varpi$ and preserves its
output law.  Proposition~\ref{prop:lb-comparison}, whose proof permits
$c=1/8$, therefore gives
\begin{equation}
\label{lb:eq:transferred-gradient-event}
  \PP\bigl(\|\nabla P_U(x_{\rm out})\|\ge2\eps\bigr)\ge\frac34.
\end{equation}
By \eqref{lb:eq:lift-stationarity}, on this event,
$\|\nabla\Phi_U(x_{\rm out})\|\ge3\eps/2$.
Averaging the event probability over $U$ gives a fixed rotation for
which this event has probability at least $3/4$ over the algorithm and
oracle randomness.  For that fixed instance,
\[
  \E\|\nabla\Phi_U(x_{\rm out})\|^2
  \ge\frac34\left(\frac32\eps\right)^2
  =\frac{27}{16}\eps^2>\eps^2.
\]
Equation~\eqref{lb:eq:N0-dominates-clb} covers every integer budget in
Theorem~\ref{thm:lower} with $c=c_{\rm lb}$.

It remains to check that the fixed instance lies in the stated class.
Proposition~\ref{prop:lb-scalar-transfer} gives the numerical base
regularity and initial bounds.  Lemma~\ref{lb:lem:analytic-class}
bounds the order-$p$ constant independently of $p$, and
Lemma~\ref{lb:lem:oracle-valid} gives $\sigma^2\le1/2$.
Taking $M_0$ to be the maximum of one and these numerical bounds places
the pair in $\mathcal P_p(M_0)$.  Every smallness condition on $\eps$
used above is numerical and independent of $p$, so a common
$\eps_0>0$ suffices.

The construction is finite dimensional.  In
\eqref{lb:eq:comparison-dimension}, $N_{\mathrm s}=N_0$ and
$R^2=\Theta(T)$, hence
\[
  d=O\!\left(\frac{T^2}{\varpi}\log\frac{T}{\varpi}\right)
   =O\!\left(\eps^{-6-2/p}\log(1/\eps)\right).
\]
\end{proof}

\end{document}